\newtheorem{theorem}{Theorem}[section]
\newtheorem{remark}[theorem]{Remark}
\newtheorem{hypothesis}[theorem]{Hypothesis}
\newcommand{\Rmnum}[1]{\expandafter\@slowromancap\romannumeral #1@}
\def \Imm {\mathbb{I}}
\def \Rm {\mathbb{R}}
\def\C{\mathcal{C}}
\newcommand{\be}{\mathbf e}
\newcommand{\bE}{\bm E}
\newcommand{\tr}{ {\text{tr }}}
\newcommand{\cout}[1]{}
\def \i {\boldsymbol\iota}
\title{Imaging of complex-valued tensors for two-dimensional Maxwell's equations}
\author{Chenxi Guo\thanks{Department of Applied Physics and Applied Mathematics, Columbia University,  New York NY, 10027; cg2597@columbia.edu} \and Guillaume Bal\thanks{Department of Applied Physics and Applied Mathematics, Columbia University,  New York NY, 10027; gb2030@columbia.edu}}
\begin{document}
\maketitle
\begin{abstract}
This paper concerns the imaging of a complex-valued anisotropic tensor $\gamma=\sigma+\i\omega\varepsilon$ from knowledge of several inter magnetic fields $H$ where $H$ satisfies the anisotropic Maxwell system on a bounded domain $X\subset\Rm^2$ with prescribed boundary conditions on $\partial X$.  We show that $\gamma$ can be uniquely reconstructed with a loss of two derivatives from errors in the acquisition $H$. A minimum number of five {\em well-chosen} functionals guaranties a local reconstruction of $\gamma$ in dimension two. The explicit inversion procedure is presented in several numerical simulations, which demonstrate the influence of the choice boundary conditions on the stability of the reconstruction. This problem finds applications in the medical imaging modalities Current Density Imaging and Magnetic Resonance Electrical Impedance Tomography. 
\end{abstract}

\section{Introduction}
The electrical properties of a biological tissue are characterized by $\gamma = \sigma+\i\varepsilon$, where $\sigma$ and $\varepsilon$ denote the conductivity and the permittivity. The properties that indicate conditions of tissues can provide important diagnostic information. Extensive studies have been made to produce medical images inside human body by applying currents to probe their electrical properties. This technique is called Electrical Impedance Tomography (EIT). This imaging technique displays high contrast between health and non-healthy tissues. However, EIT uses boundary measurements of current-voltage data and suffers from very low resolution capabilities. This leads to an inverse boundary value problem (IBVP) called the Calder\'{o}n's problem, which is severely ill-posed and unstable \cite{Sylvester1987,Uhlmann2009}. For IBVP in electrodynamics, we refer the reader to \cite{Caro2009, Kenig2011, Ola1993, Ola1996, Somersalo1992}. Moreover, well-known obstructions show that anisotropic admittivities cannot be uniquely reconstructed from boundary measurements, see \cite{Kohn1984,Uhlmann2009}.

Several recent medical imaging modalities, which are often called coupled-physic modalities or hybrid imaging modalities, aim to couple the high contrast modality with the high resolution modality.  These new imaging techniques are two steps processes. In the first step, one uses boundary measurements to reconstruct internal functionals inside the tissue. In the second step, one reconstructs the electrical properties of the tissue from given internal data, which greatly improve the resolution of quantitative reconstructions. An incomplete list of such techniques includes ultrasound modulated optical tomography, magnetic resonance electrical impedance tomography, photo-acoustic tomography and transient elastography. We refer the reader to \cite{Ammari2008, Bal2012e, Guo2014, Bal2010, Kuchment2011a, Kuchment2011, Monard2012a, Nachman2009} for more details. For other techniques in inverse problems, such as inverse scattering, we refer the reader to \cite{Kabanikhin2011, Nivikov2005, Takhtadzhan1979}.

In this paper we are interested in the hybrid inverse problem of reconstructing $(\sigma, \varepsilon)$ in the Maxwell's system from the internal magnetic fields $H$. Internal magnetic fields can be obtained using a Magnetic Resonance Imaging (MRI) scanner, see \cite{Ider1997} for details.  The explicit reconstructions we propose require that all components of the magnetic field $H$ be measured. This may be challenging in many practical settings as it requires a rotation of the domain being imaged or of the MRI scanner. The reconstruction of $(\sigma, \varepsilon)$ from knowledge of only some components of $H$, ideally only one component for the most practical experimental setup, is open at present. We assume that the above first step is done and we are given the data of the internal magnetic fields in the domain. 

In the isotropic case, a reconstruction for the conductivity was given in \cite{Seo2012}. In \cite{Guo2013b}, an explicit reconstruction procedure was derived for an arbitrary anisotropic, complex-valued tensor $\gamma= \sigma+\i\varepsilon$ in the Maxwell's equations in $\Rm^3$. This explicit reconstruction method requires that some matrices constructed from measurements satisfy appropriate conditions of linear independence. In the present work, we provide numerical simulations in two dimensions to demonstrate the reconstruction procedure 
for both smooth and rough coefficients. 

The rest of the paper is organized as follows. The main results and the reconstruction formulas are presented in Section \ref{main results}. The reconstructibility hypothesis is proved in Section \ref{Fulfilling Hypothesis}. The numerical implementations of the algorithm with synthetic data are shown in Section \ref{num simu}. Section \ref{se:conclu} gives some concluding remarks.

\section{Statements of the main results}\label{main results}

\subsection{Modeling of the problem}

Let $X$ be a simply connected, bounded domain of $\Rm^2$ with smooth boundary. The smooth anisotropic electric permittivity, conductivity, and the constant isotropic magnetic permeability are respectively described by $\varepsilon(x)$, $\sigma(x)$ and $\mu_0$, where $\varepsilon(x)$, $\sigma(x)$ are tensors and $\mu_0$ is a constant scalar, known, coefficient. We denote $\gamma=\sigma+\i\omega\varepsilon$, where $\omega >0$ is the frequency of the electromagnetic wave. We assume that $\varepsilon(x)$ and $\sigma(x)$ are uniformly bounded from below and above, i.e., there exist constants $\kappa_{\varepsilon},\kappa_{\sigma}>1$ such that for all $\xi\in \Rm^2$,
\begin{align}\label{positive definite}
\begin{split}
  \kappa_{\varepsilon}^{-1}\|\xi\|^2  &\le \xi\cdot\varepsilon\xi \le \kappa_{\varepsilon}\|\xi\|^2 \quad \text{in} \enspace  X\\ 
  \kappa_{\sigma}^{-1}\|\xi\|^2  &\le\xi\cdot\sigma\xi \le \kappa_{\sigma}\|\xi\|^2 \quad \text{in} \enspace  X.
 \end{split}
\end{align}
Let $\bm E=( E^1, E^2)'\in\mathbb{C}^2 $ and $H\in\mathbb{C}$ denote the electric and magnetic fields inside the domain $X$. Thus $\bm E$ and $H$ solve the following time-harmonic Maxwell's equations: 
\begin{align}\label{Eq:maxwell} 
\left\{\begin{array}{lll}
\nabla\times\bm E+\i\omega\mu_0H =0\\
\bm\nabla\times H-\gamma\bm E=0,
\end{array}\right.
\end{align}
with the boundary condition 
\begin{align}\label{Eq:boundary}
\bm\nu\times \bm E:= \nu_1E^2-\nu_2 E^1=f, \quad \text{on} \enspace  {\partial X},
\end{align}
where $\bm\nu=(\nu_1,\nu_2)$ is the exterior unit normal vector on the boundary $\partial X$. The standard well-posedness theory for MaxwellÕs equations \cite{Lions1993} states that given $f\in H^{\frac{1}{2}}(\partial X)$, the equation \eqref{Eq:maxwell}-\eqref{Eq:boundary} admits a unique solution in the Sobolev space $H^1(X)$. In this paper, the notations $\nabla$ and $\bm\nabla$ distinguish between the scalar and vector curl operators:
\begin{align}
\nabla\times\bm E = \frac{\partial  E^2}{\partial x_1} - \frac{\partial  E^1}{\partial x_2} \quad \text{and} \quad \bm\nabla\times H=(- \frac{\partial H}{\partial x_2},  \frac{\partial H}{\partial x_1})'.
\end{align}
Although \eqref{Eq:maxwell} can be reduced to a scalar Laplace equation for $H$, we treat it as a system. The reconstruction method holds for the full 3 dimensional case. In this paper,  we assume that the conductivity $\sigma$ and the permeability $\varepsilon$ are independent 
of the third component in $\Rm^3$ and give the numerical simulations in two dimension to validate the reconstruction method. 
\subsection{Local reconstructibility condition}
We select $5$ boundary conditions $\{f_i\}_{1\leq i\leq 5}$ such that the corresponding electric and magnetic fields $\{\bm E_i,H_i\}_{1\leq i\leq 5}$ satisfy the Maxwell's equations \eqref{Eq:maxwell}. Assuming that over a sub-domain $X_0\subset X$, the two electric fields $\bE_1$, $\bE_2$ satisfy the following positive condition, 
\begin{align}\label{posi condition}
	\inf_{x\in X_0} |\det (\bE_1,\bE_2)| \ge c_0>0.
\end{align}
Thus the $3$ additional solutions $\{\bE_{2+j}\}_{1\leq j\leq 3}$ can be decomposed as linear combinations in the basis $(\bE_1,\bE_2)$,
\begin{align}
\bE_{2+j}=\lambda^j_1 \bE_1+\lambda^j_2 \bE_2, \quad  1\leq j\leq 3,
\label{ln dep}
\end{align}
where the coefficients $\{\lambda^j_1,\lambda^j_2\}_{1\leq j\leq 3}$ can be computed by Cramer's rule as follows:
\begin{align}\label{cramer rule}
\begin{split}
\lambda^j_1 &=\frac{\det(\bE_{2+j},\bE_2)}{\det(\bE_1,\bE_2)}=\frac{\det(\bm\nabla\times H_{2+j},\bm\nabla\times H_2)}{\det(\bm\nabla\times H_1,\bm\nabla\times H_2)},\\
\lambda^j_2 &=\frac{\det(\bE_1,\bE_{2+j})}{\det(\bE_1,\bE_2)}=\frac{\det(\bm\nabla\times H_1,\bm\nabla\times H_{2+j})}{\det(\bm\nabla\times H_1,\bm\nabla\times H_2)}.
\end{split}
\end{align}
Thus these coefficients can be computed from the available magnetic fields. The reconstruction procedures will make use of the matrices $Z_j$ defined by
\begin{align}
    Z_j=\left[\bm\nabla\times\lambda^j_1 | \bm\nabla\times\lambda^j_2\right],\quad \text{where } \enspace  1\leq j\leq 3.
\label{Y Z}
\end{align}
These matrices are also uniquely determined from the known magnetic fields. Denoting the matrix $H:=[\bm\nabla\times H_1 | \bm\nabla\times H_2]$ and the skew-symmetric matrix $J=\left[\begin{smallmatrix} 0 &-1\\ 1 & 0 \end{smallmatrix}\right]$, we construct three matrices as follows,
\begin{align}\label{constraint matrice}
M_j = (Z_jH^T)^{sym}, \quad  1\leq j\leq 3,
\end{align}
where $A^T$ denotes the transpose of a matrix $A$ and $A^{sym}:= (A + A^T)/2$. The calculations in the following section show that condition \eqref{posi condition} and the linear independence of $\{M_j\}_{1\leq j\leq 3}$ in $S_2(\mathbb{C})$ are sufficient to guarantee local reconstruction of $\gamma$. The required conditions, which allow us to set up our reconstruction formulas, are listed in the following hypotheses. The reconstructions are {\em local} in nature: the reconstruction of $\gamma$ at $x_0\in X$ requires the knowledge of $\{H_j(x)\}_{1\leq j\leq J}$ for $x$ only in the vicinity of $x_0$.

\begin{hypothesis}\label{2 hypo}
Given Maxwell's equations \eqref{Eq:maxwell} with smooth $\varepsilon$ and $\sigma$ satisfying the uniform ellipticity conditions \eqref{positive definite}, there exist a set of illuminations $\{f_i\}_{1\leq i\leq 5}$ such that the corresponding electric fields $\{\bm E_i\}_{1\leq i\leq 5}$ satisfy the following conditions:
\begin{enumerate}
  \item  $\inf_{x\in X_0} |\det (\bE_1,\bE_2)| \ge c_0>0$ holds on a sub-domain $X_0\subset X$,
   \item The matrices $\{M_j\}_{1\leq j\leq 3}$ constructed in \eqref{constraint matrice} are linearly independent in $S_2(\mathbb{C})$ on $X_0$, where $S_2(\mathbb{C})$ denotes the space of $2\times 2$ symmetric matrices.
\end{enumerate}
\end{hypothesis}
 
\begin{remark}
Note that both conditions in Hypothesis \ref{2 hypo} can be expressed in terms of the measurements $\{H_j\}_j$, and thus can be checked during the experiments. When the above constant $c_0$ is deemed too small, or the matrices $M_j$ are not sufficiently independent, then additional measurements might be required. For the $3$ dimensional case, Hypothesis \ref{2 hypo} holds locally, under some smoothness assumptions on $\gamma$, with $6$ well-chosen boundary conditions. The proof is based on the Runge approximation, see \cite{Guo2013b} for details.
\end{remark}

\subsection{Reconstruction approaches and stability results}
The reconstruction approaches were presented in \cite{Guo2013b} for a $3$ dimensional case. To make this paper self-contained, we briefly list the algorithm for the two-dimensional case. 
Denote $M_2(\mathbb{C})$ as the space of $2\times 2$ matrices with inner product $\langle A,B\rangle:=\tr(A^*B)$. We assume that Hypothesis \ref{2 hypo} holds over some $X_0\subset X$ with $5$ electric fields $\{\bm E_i\}_{1\leq i\leq 5}$. In particular, the matrices $\{M_j\}_{1\leq j\leq 3}$ constructed in \eqref{constraint matrice} are linearly independent in $S_2(\mathbb{C})$. We will see that the inner products of $(\gamma^{-1})^*$ with all $M_j$ can be calculated from knowledge of $\{H_j\}_{1\leq j\leq 5}$. Then $\gamma$ can be explicitly reconstructed by least-square method. The reconstruction formulas can be found in Section \ref{rec approach}. This algorithm leads to a unique and stable reconstruction and the stability estimate will be given in Section \ref{stablity estimate}.

\subsubsection{Reconstruction algorithms}\label{rec approach}
We apply the curl operator to both sides of \eqref{ln dep}. Using the product rule, we get the following equation,
\begin{align}
\sum_{i=1,2}\lambda^j_i\nabla\times \bE_i + \bE_i\cdot\bm\nabla\times\lambda^j_i = \nabla\times\bE_{2+j}, \quad \text{for} \enspace  j\geq 3.
\end{align}
Substituting $H_i$ into $\bE_i$ in the above equation, we obtain the following equation after rearranging terms,
\begin{align}
\sum_{i=1,2} \bm\nabla\times\lambda_i^j\cdot(\gamma^{-1}\bm\nabla\times H_i) = \sum_{i=1,2}\i\omega\mu_0(\lambda^j_iH_i-H_{2+j})
\end{align}
Recalling the definition of $Z_j$ by \eqref{Y Z}, the above equation leads to 
\begin{align}\label{rec sys}
\gamma^{-1}:(Z_jH^t)^{sym}=\sum_{i=1,2}\i\omega\mu_0(\lambda^j_iH_i-H_{2+j}),
\end{align}
where the matrix $H=[\bm\nabla\times H_1 | \bm\nabla\times H_2]$. Note that $M_j = (Z_jH^T)^{sym}$ and the RHS of the above equation are computable from the measurements, thus $\gamma$ can be explicitly reconstructed by \eqref{rec sys} provided that $\{M_j\}_{1\leq j\leq 3}$ are of full rank in $S_2(\mathbb{C})$.
\begin{remark}
The reconstruction formulae is local. In practice, we add more measurements and get additional $M_j$ such that $\{M_j\}_j$ is of full rank in $S_2(\mathbb{C})$. The system \eqref{rec sys} becomes overdetermined and $\gamma$ can be reconstructed by solving \eqref{rec sys} using least-square method. 
\end{remark}

\subsubsection{Uniqueness and stability results}\label{stablity estimate}
The algorithm derived in the above section leads to a unique and stable reconstruction in the sense of the following theorem:

\begin{theorem}\label{stability}
Suppose that Hypotheses \ref{2 hypo} hold over some $X_0\subset X$ for two sets of electric fields $\{\bm E_i\}_{1\leq i\leq 5}$ and $\{\bm E'_i\}_{1\leq i\leq 5}$, which solve the Maxwell's equations \eqref{Eq:maxwell} with the complex tensors $\gamma$ and $\gamma'$ satisfying the uniform ellipticity condition \eqref{positive definite}. Then $\gamma$ can be uniquely reconstructed in $X_0$ with the following stability estimate,
\begin{align}
	\|\gamma - \gamma'\|_{W^{s,\infty}(X_0)}\le C \sum_{i=1}^{5} \|H_i-H'_i\|_{W^{s+2,\infty}(X)},
	\label{eq:stability}
\end{align}
 for any integer $s>0$ and some constant $C=C(s)$. 
 \end{theorem}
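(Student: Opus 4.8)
The plan is to treat \eqref{rec sys} as a small pointwise linear algebraic system whose coefficients are manufactured from the data, solve it to obtain $\gamma^{-1}$, and then propagate perturbations through each algebraic operation while carefully tracking how many derivatives of $H$ are spent. Since the physical tensors $\sigma$ and $\varepsilon$ are symmetric, $\gamma$ and hence $\gamma^{-1}$ lie in $S_2(\mathbb{C})$, so equation \eqref{rec sys} reads $\gamma^{-1}:M_j=R_j$ for $j=1,2,3$, where $M_j=(Z_jH^T)^{sym}$ is the coefficient matrix from \eqref{constraint matrice} and $R_j=\sum_{i=1,2}\i\omega\mu_0(\lambda^j_iH_i-H_{2+j})$ is the right-hand side. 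Because $S_2(\mathbb{C})$ is three-dimensional and $A:B=\tr(A^TB)$ is a nondegenerate symmetric bilinear form on it, the linear map $L:A\mapsto(A:M_j)_{j=1}^3$ is invertible exactly when $\{M_j\}_{1\le j\le 3}$ are linearly independent, which is condition $2$ of Hypothesis \ref{2 hypo}. This already yields the uniqueness assertion: $\gamma^{-1}=L^{-1}R$, and hence $\gamma$, is determined pointwise on $X_0$ by the measured fields.

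Next I would install the algebra estimates that do the derivative counting. The key analytic fact is that $W^{s,\infty}(X_0)$ is a Banach algebra, and that reciprocals of functions bounded below in modulus, as well as inverses of matrix fields with determinant bounded below, remain in $W^{s,\infty}$ with norms controlled by the lower bound and by the data. Applying this to Cramer's formula \eqref{cramer rule}, whose denominator $\det(\bE_1,\bE_2)=\det(\bnabla\times H_1,\bnabla\times H_2)$ is bounded below by $c_0$ through condition $1$, gives $\|\lambda^j_i\|_{W^{s,\infty}}\lesssim\|H\|_{W^{s+1,\infty}}$, i.e. one derivative of $H$. The matrices $Z_j=[\bnabla\times\lambda^j_1\,|\,\bnabla\times\lambda^j_2]$ of \eqref{Y Z} cost one further derivative, so $\|M_j\|_{W^{s,\infty}}\lesssim\|H\|_{W^{s+2,\infty}}$, while $R_j$ only ever differentiates $H$ once, giving $\|R_j\|_{W^{s,\infty}}\lesssim\|H\|_{W^{s+1,\infty}}$. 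Running the same manipulations on differences produces the Lipschitz bounds $\|M_j-M'_j\|_{W^{s,\infty}}\lesssim\|H-H'\|_{W^{s+2,\infty}}$ and $\|R_j-R'_j\|_{W^{s,\infty}}\lesssim\|H-H'\|_{W^{s+1,\infty}}$, with constants depending on $s$, $c_0$, $\omega$, $\mu_0$, and on a priori $W^{s+2,\infty}$ bounds for $H$ and $H'$ coming from well-posedness and ellipticity \eqref{positive definite}.

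Finally I would push these through the linear solve. Writing $L$ and $L'$ for the data maps attached to the two field families — both invertible on $S_2(\mathbb{C})$ with entries in $W^{s,\infty}$ because each satisfies Hypothesis \ref{2 hypo} — the identity $\gamma^{-1}-(\gamma')^{-1}=L^{-1}(R-R')+L^{-1}(L'-L)(L')^{-1}R'$ combined with the bounds above gives $\|\gamma^{-1}-(\gamma')^{-1}\|_{W^{s,\infty}}\lesssim\|H-H'\|_{W^{s+2,\infty}}$, the two-derivative contribution entering through $L-L'$, that is, through the $M_j$. One last inversion, using that $\gamma,\gamma'$ are uniformly elliptic so that $A\mapsto A^{-1}$ is Lipschitz on the relevant bounded set in $W^{s,\infty}$, converts this into the claimed estimate \eqref{eq:stability} for $\gamma-\gamma'$.

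The main obstacle is precisely the bookkeeping in the algebra estimates: one must verify that each nonlinear operation — division by $\det(\bE_1,\bE_2)$ in Cramer's rule, the two curls producing $Z_j$, and the final matrix inversion — is genuinely Lipschitz on $W^{s,\infty}$ with the stated derivative count, and that the conditioning of $L$ does not degenerate but is controlled uniformly on $X_0$ by reading condition $2$ of Hypothesis \ref{2 hypo} as a quantitative lower bound on the Gram determinant of $\{M_j\}$. The loss of exactly two derivatives is dictated by the single place, the formation of $Z_j=[\bnabla\times\lambda^j_1\,|\,\bnabla\times\lambda^j_2]$, where two successive curls act on $H$.
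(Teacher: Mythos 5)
Your proposal is correct and takes the same route as the paper: the paper's entire proof is the one-line observation that the reconstruction procedure differentiates the data $H$ exactly twice, and your argument is precisely that observation carried out in full --- Banach-algebra estimates in $W^{s,\infty}$ for Cramer's rule \eqref{cramer rule}, the curls forming $Z_j$ in \eqref{Y Z}, the Lipschitz perturbation identity for the linear solve of \eqref{rec sys}, and the final matrix inversion under \eqref{positive definite}. If anything, your write-up is more complete than the paper's, since you also flag that condition 2 of Hypothesis \ref{2 hypo} must be read as a quantitative lower bound on the Gram determinant of $\{M_j\}$ for the constant $C$ to be uniform on $X_0$, a point the paper leaves implicit.
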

\begin{proof}
The above estimate is straightforward by noticing that two derivatives are taken in the reconstruction procedure for $\gamma$.
\end{proof}

\section{Fulfilling Hypothesis \ref{2 hypo}} \label{Fulfilling Hypothesis}
In this section, we assume that $\gamma$ is a diagonalizable constant tensor. We will take special CGO-like solutions of the Maxwell's equations \eqref{Eq:maxwell} and demonstrate that Hypothesis \ref{2 hypo} can be fulfilled with these solutions. By definition of the curl operator, it suffices to show that 
\begin{align}\label{tilde constraint matrice}
\tilde M_j = (\tilde Z_j\tilde H^T)^{sym}, \quad  1\leq j\leq 3,
\end{align}
are linearly independent in $S_2(\mathbb{C})$,  where $\tilde Z_j=\left[\nabla\lambda^j_1 | \nabla\lambda^j_2\right]$ and $\tilde H=[\nabla H_1 | \nabla H_2]$. We derive the following Helmholtz-type equation from \eqref{Eq:maxwell},
\begin{align}\label{eq:Helm}
-\nabla\cdot\tilde\gamma^{-1}\nabla H_i + H_i= 0, \quad \text{for} \enspace  1\leq i\leq 5,
\end{align}
where $\tilde\gamma = -\i\omega\mu J^T\gamma J$ and admits a decomposition $\tilde\gamma= QQ^T$ with $Q$ invertible. We take special CGO-like solutions of the form
\begin{align}\label{CGO like}
H_i=  e^{x\cdot Qu_i},
\end{align}
where the $u_i$ are vectors of unit length. Obviously, $u_i$ defined in \eqref{CGO like} satisfy \eqref{eq:Helm} and 
\begin{align}\label{tilde H}
\tilde H = QU\left[\begin{array}{cc} e^{x\cdot Qu_1} &0\\  0 &  e^{x\cdot Qu_2} \end{array}\right],
\end{align}
where $U=[u_1| u_2]$. Therefore, Hypothesis \ref{2 hypo}.1 can be easily fulfilled by choosing independent unit vectors $u_1= \be_1$, $u_2=\be_2$. Using the corresponding additional electric fields $\{\bE_{2+j}\}_{1\leq j\leq 3}$, Cramer's rule as in \eqref{cramer rule} yields the decompositions
\begin{align*}
\bE_{2+j}=\lambda^j_1 \bE_1+\lambda^j_2 \bE_2,  \quad \text{with} \enspace \lambda^j_1=e^{x\cdot Q(u_{2+j}-u_1)}\det(u_{2+j},u_2), \enspace \lambda^j_2=e^{x\cdot Q(u_{2+j}-u_2)}\det(u_1,u_{2+j}).
\end{align*}
Then by definition of $\tilde Z_j$, we get the following expression,
\begin{align}
\tilde Z_j = Q[\frac{H_{2+j}}{H_1}\det(u_{2+j},u_2)(u_{2+j}-u_1),\frac{H_{2+j}}{H_2}\det(u_1,u_{2+j})(u_{2+j}-u_2)].
\end{align}
Together with \eqref{tilde H}, straightforward calculations lead to 
\begin{align}
\tilde Z_j\tilde H^T = H_{2+j}Q[\det(u_{2+j},u_2)(u_{2+j}-u_1),\det(u_1,u_{2+j})(u_{2+j}-u_2]Q^T.
\end{align}
Using the fact that $u_{2+j}=(u_{2+j}\cdot u_1)u_1+(u_{2+j}\cdot u_2)u_2$, the above equation leads to 
\begin{align}
\tilde M_j = H_{2+j}Q \left[\begin{array}{cc} (u_{2+j}\cdot u_1)( (u_{2+j}\cdot u_1)-1) & (u_{2+j}\cdot u_1)(u_{2+j}\cdot u_2)\\  (u_{2+j}\cdot u_1)(u_{2+j}\cdot u_2) &  (u_{2+j}\cdot u_2)( (u_{2+j}\cdot u_2)-1) \end{array}\right]Q^T,
\end{align}
where $u_1= \be_1$, $u_2=\be_2$. Therefore, it is easy to find $u_{2+j}$ vectors of unit length such that $\tilde M_j$ are linearly independent in $S_2(\mathbb{C})$.
\begin{remark}
To derive local reconstruction formulas for more general tensors (e.g. $\C^{1,\alpha}(X)$), we need local independence conditions of $\{M_j\}_j$ and we need to control the local behavior of solutions by well-chosen boundary conditions. This is done by means of a Runge approximation. For details, we refer the reader to \cite{Guo2012a},\cite{Bal2012} and \cite{Guo2013b}.
\end{remark}

\section{Numerical experiments}\label{num simu}

In this section we present some numerical simulations based on synthetic data to validate the reconstruction algorithms from the previous section. 
\subsection{Preliminary}
 We decompose $\gamma=\sigma+\i\omega\varepsilon$ into the following form with six unknown coefficients  $\{\sigma_i\}_{1\leq i\leq 3}$, $\{\varepsilon_i\}_{1\leq i\leq 3}$ respectively for $\sigma$ and $\epsilon$,
 
 \begin{align}\label{3coef}
\gamma= \left[\begin{array}{cc} \sigma_1 & \sigma_2\\  \sigma_2 &  \sigma_3\end{array}\right]+\i\omega\left[\begin{array}{cc} \varepsilon_1 &\varepsilon_2\\ \varepsilon_2 & \varepsilon_3\end{array}\right], 
\end{align}
where each coefficient can be explicitly reconstructed by solving the overdetermined linear system \eqref{rec sys} using least-square method.  

In the numerical experiments below, we take the domain of reconstruction to be the square $X=[-1,1]^2$ and use the notation $\mathbf{x}= (x,y)$. We use a $\mathsf{N+1\times N+1}$ square grid with $\mathsf{N}=80$, the tensor product of the equi-spaced subdivision $\mathsf{x =-1:h:1}$ with $\mathsf{h=2/N}$. The synthetic data $H$ are generated by solving the Maxwell's equations \eqref{Eq:maxwell} for {\em known} conductivity $\sigma$ and electric permittivity $\varepsilon$, using a finite difference method implemented with {\tt MatLab}. We refer to these data as the "noiseless" data. To simulate noisy data, the internal magnetic fields $H$ are perturbed by adding Gaussian random matrices with zero means. The standard derivations $\alpha$ are chosen to be $0.1\%$ of the average value of $|H|$.

We use the relative $L^2$ error to measure the quality of the reconstructions. This error is defined as the $L^2$-norm of the difference between the reconstructed coefficient and the true coefficient, divided by the $L^2$-norm of the true coefficient.  $\mathcal{E}^C_{\sigma_i}$, $\mathcal{E}^N_{\sigma_i}$, $\mathcal{E}^C_{\varepsilon_i}$, $\mathcal{E}^N_{\varepsilon_i}$ with $1\leq i\leq 3$ denote respectively the relative $L^2$ error in the reconstructions from clean and noisy data for $\sigma_i$ and $\varepsilon_i$.
\paragraph{Regularization procedure.} We use a total variation method as the denoising procedure by minimizing the following functional,
\begin{align}
\mathcal{O}(\mathbf{f})= \frac{1}{2}\|\mathbf{f}-\mathbf{f}_{\text{rc}}\|^2_2 +\rho \|\Gamma \mathbf{f}\|_{\text{\scriptsize TV}}
\end{align}
where $\mathbf{f}_{\text{rc}}$ denotes the explicit reconstructions of the coefficients of $\sigma$ and $\varepsilon$, $\Gamma$ denotes discretized version of the gradient operator. We choose the $l^1$-norm as the regularization TV norm for discontinuous, piecewise constant, coefficients. In this case, the minimization problem can be solved using the split Bregman method presented in \cite{Goldstein2009}. To recover smooth coefficients, we minimize the following least square problem with the $l^2$-norm for the regularization term,
\begin{align}
\mathcal{O}(\mathbf{f})= \frac{1}{2}\|\mathbf{f}-\mathbf{f}_{\text{rc}}\|^2_2 +\rho \|\Gamma \mathbf{f}\|_2^2,
\end{align}
where the Tikhonov regularization functional admits an explicit solution $\mathbf{f}=(\Imm+\rho \Gamma^*\Gamma)^{-1}\mathbf{f}_{\text{rc}}$. The regularization methods are used when the data are differentiated. 

\subsection{Simulation results}
\paragraph{Simulation 1.} In the first experiment, we intend to reconstruct the smooth coefficients $\{\sigma_i, \varepsilon_i\}_{1\leq i\leq 3}$ defined in \eqref{3coef}. The coefficients are given by,
\begin{align*}
\left\{\begin{array}{lll}
\sigma_1 = 2+ \sin(\pi x)\sin(\pi y)\\
 \sigma_2 = 0.5\sin(2\pi x)\\
\sigma_3 = 1.8+e^{-15(x^2+y^2)} +e^{-15((x-0.6)^2+(y-0.5)^2)} - e^{-15((x+0.4)^2+(y+0.6)^2)} 
\end{array}\right.
\end{align*}
and 
\begin{align*}
\left\{\begin{array}{lll}
\varepsilon_1 = 2- \sin(\pi x)\sin(\pi y)\\
\varepsilon_2 = 0.5\sin(2\pi y)\\
\varepsilon_3 = 1.8+e^{-12(x^2+y^2)} +e^{-12((x+0.6)^2+(y-0.5)^2)} - e^{-12((x-0.4)^2+(y+0.6)^2)} .
\end{array}\right.
\end{align*}

We performed two sets of reconstructions using clean and noisy synthetic data respectively. The $l_2$-regularization procedure is used in this simulation. For the noisy data, the noise level is $\alpha=0.1\%$. The results of the numerical experiment are shown in Figure \ref{E1sigma} and Figure \ref{E1epsilon}. The relative $L^2$ errors in the reconstructions are $\mathcal{E}^C_{\sigma_1}=0.3\%$, $\mathcal{E}^N_{\sigma_1}=5.1\%$, $\mathcal{E}^C_{\sigma_2}=0.8\%$, $\mathcal{E}^N_{\sigma_2}=33.4\%$, $\mathcal{E}^C_{\sigma_3}=0.2\%$, $\mathcal{E}^N_{\sigma_3}=4.9\%$; $\mathcal{E}^C_{\varepsilon_1}=0.1\%$, $\mathcal{E}^N_{\varepsilon_1}=5.8\%$, $\mathcal{E}^C_{\varepsilon_2}=0.5\%$, $\mathcal{E}^N_{\varepsilon_2}=30.0\%$, $\mathcal{E}^C_{\varepsilon_3}=0.1\%$, $\mathcal{E}^N_{\varepsilon_3}=4.8\%$. 

\begin{figure}[htp]
  \centering
  \subfigure[true $\sigma_1$]{
      \includegraphics[width=37mm,height=35mm]{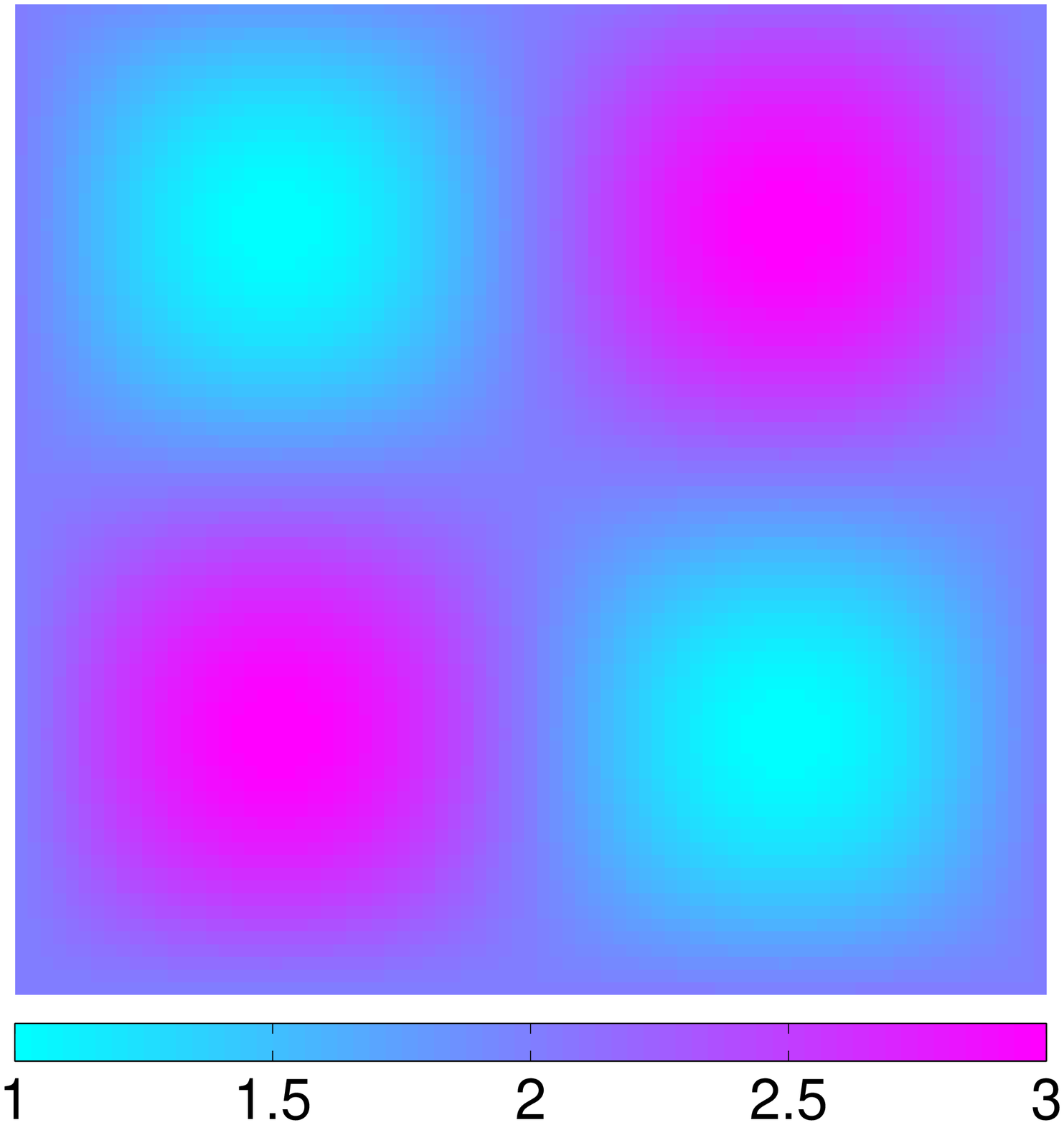}
      \label{ex1txi}
      } 
  \subfigure[$\sigma_1$ ($\alpha=0\%$)]{    
     \includegraphics[width=37mm,height=35mm]{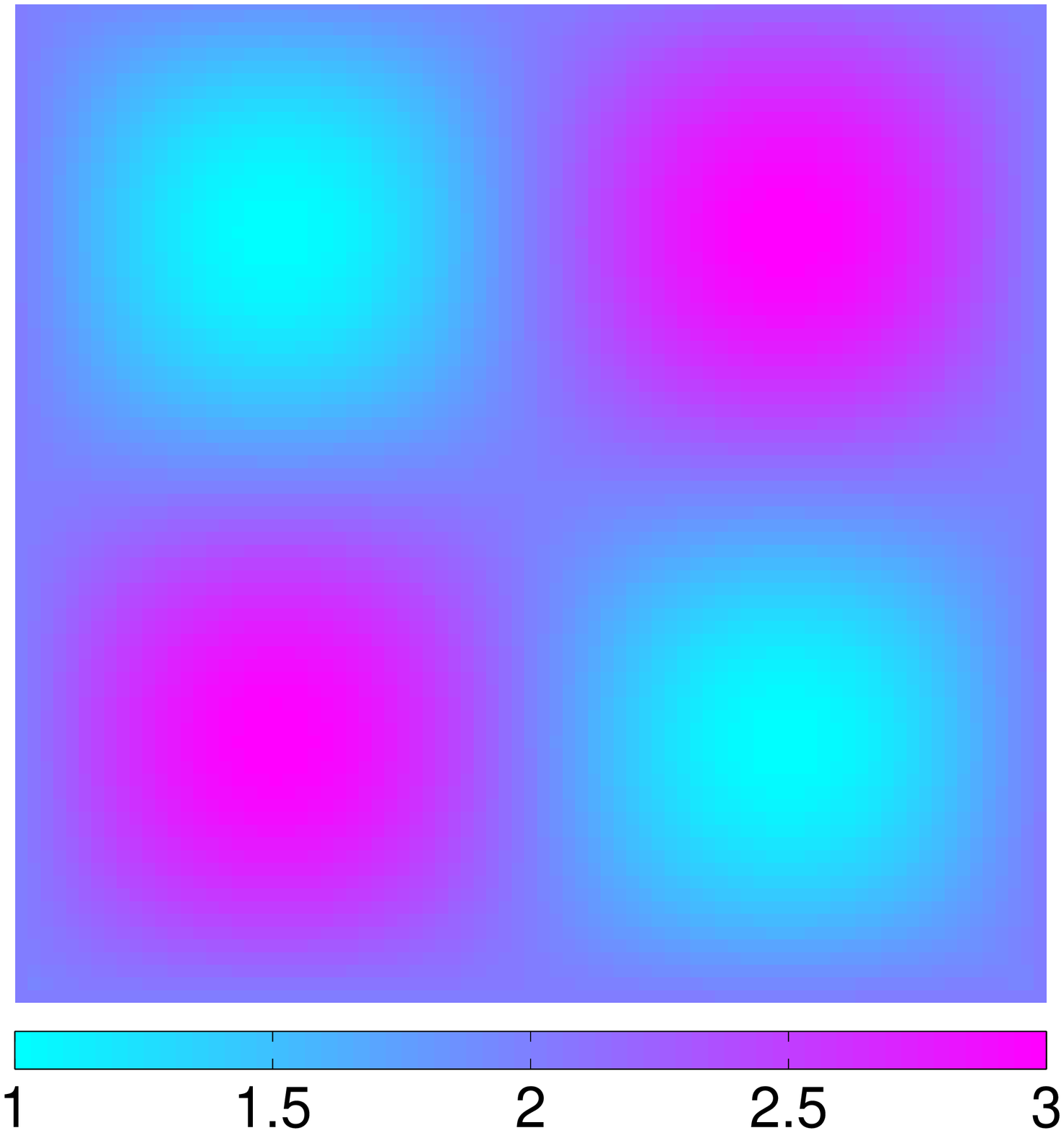}
     \label{ex1cxi}
     }
   \subfigure[$\sigma_1$ ($\alpha=0.1\%$)]{
    \includegraphics[width=37mm,height=35mm]{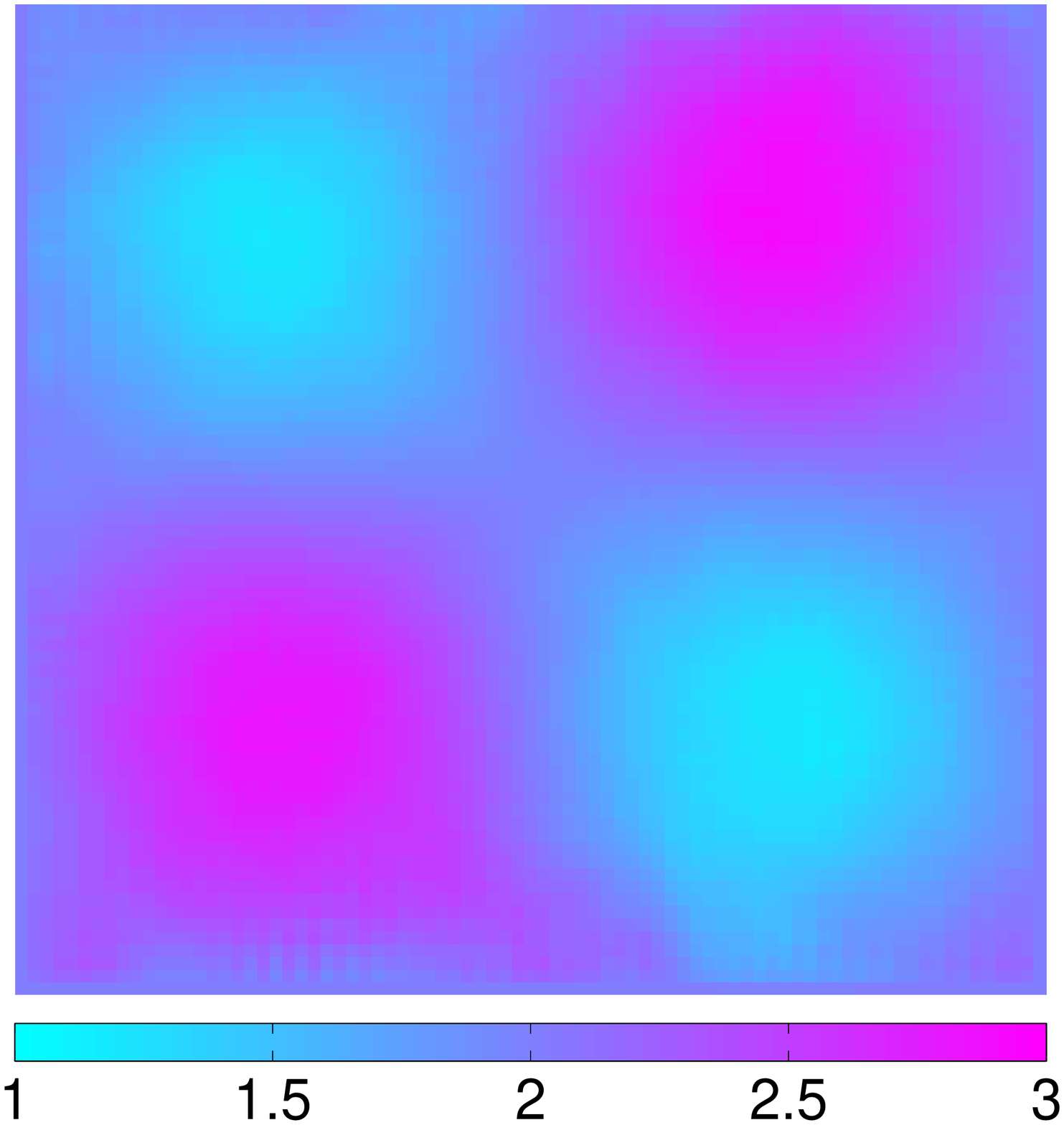}
    \label{ex1nxi}
    }
    \subfigure[$\sigma_1$ at \{$y=-0.5$\}]{
     \includegraphics[trim=10mm 5mm 10mm 0mm,clip,width=35mm,height=38mm]{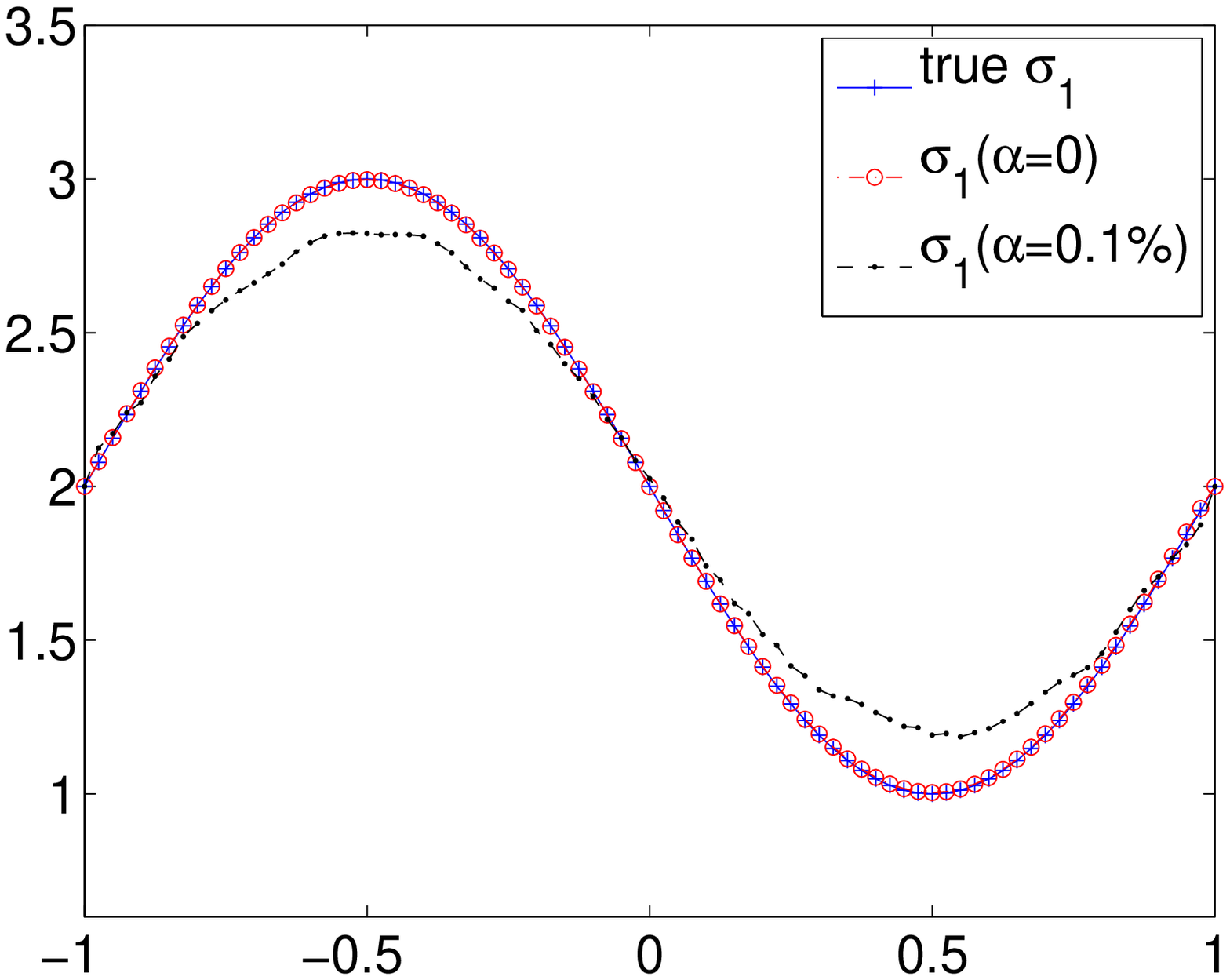} 
     \label{ex1rxi}
     }

     \subfigure[true $\sigma_2$]{
      \includegraphics[width=37mm,height=35mm]{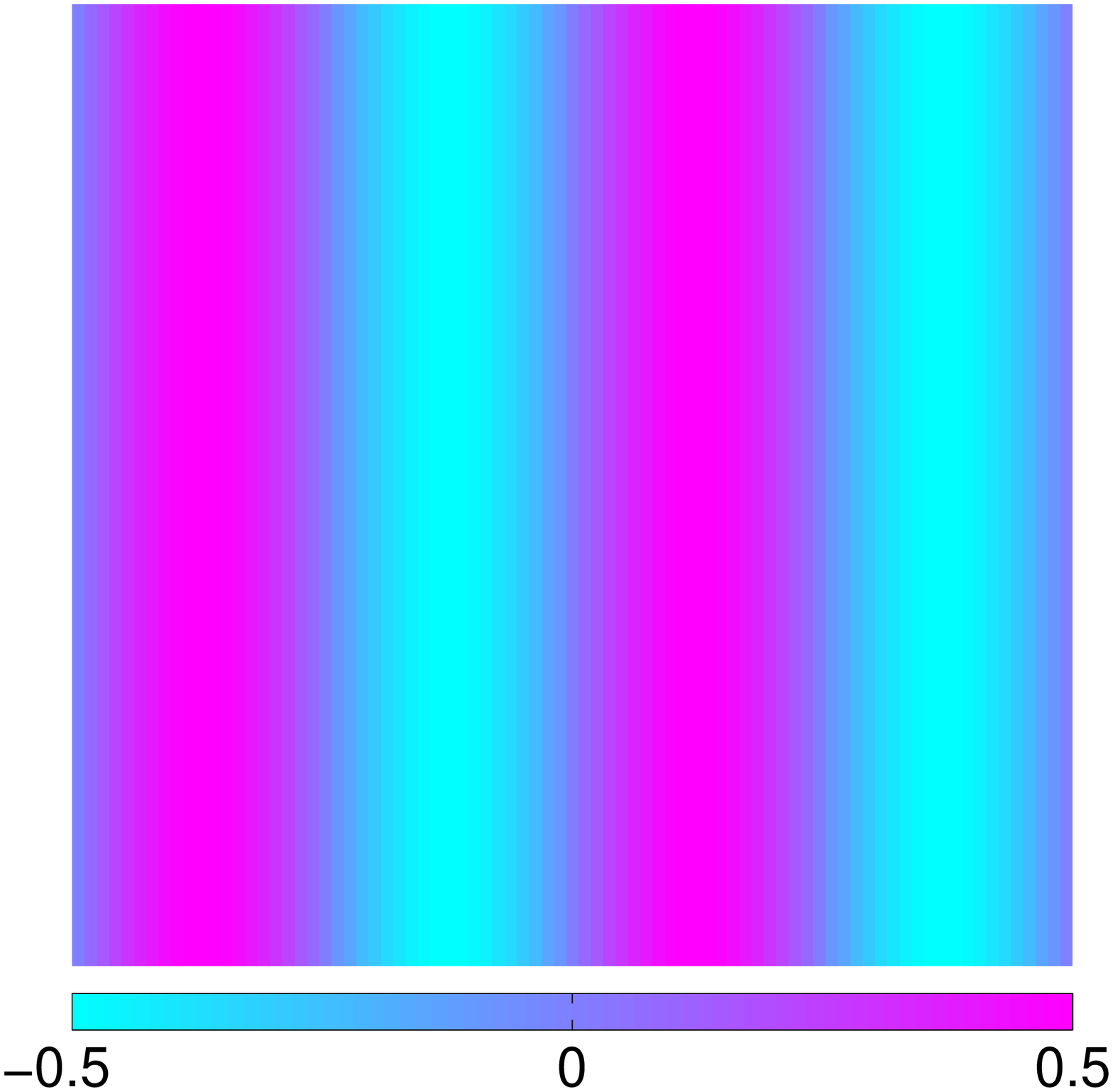}
      \label{ex1ttau}
      } 
    \subfigure[$\sigma_2$ ($\alpha=0\%$)]{ 
     \includegraphics[width=37mm,height=35mm]{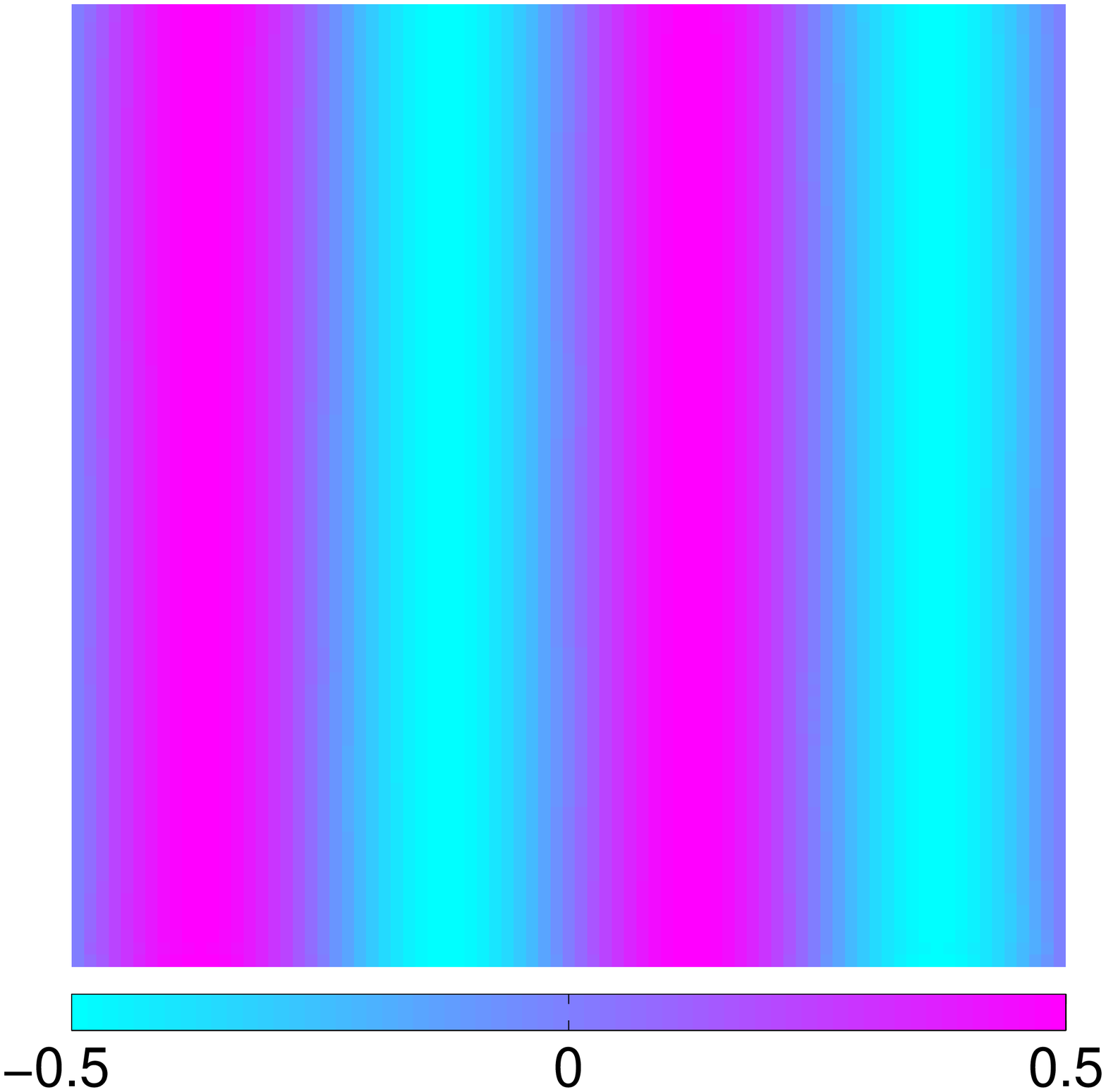}
     \label{ex1ctau}
     }
   \subfigure[$\sigma_2$ ($\alpha=0.1\%$)]{ 
    \includegraphics[width=37mm,height=35mm]{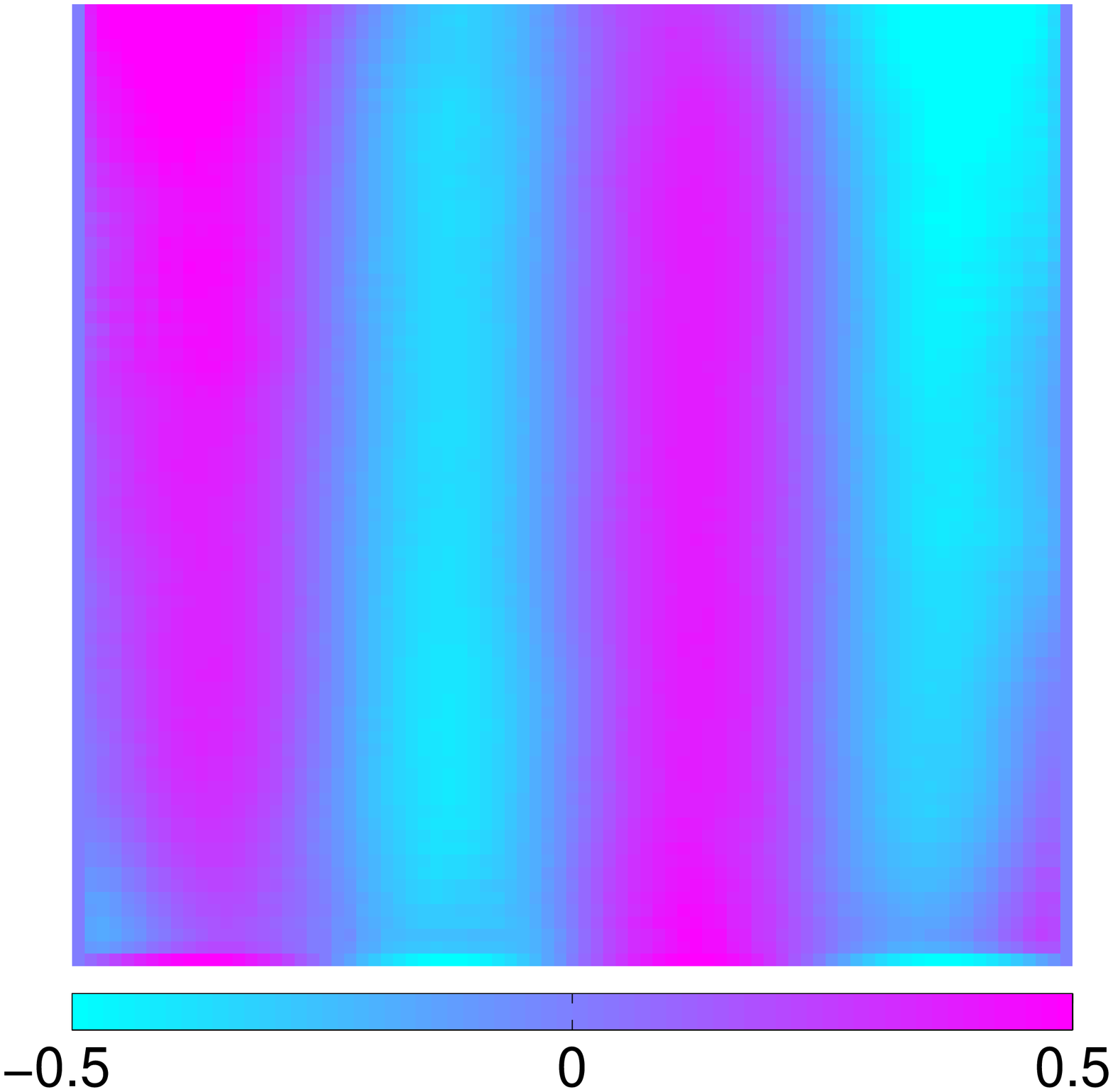}
    \label{ex1ntau}
    }
     \subfigure[$\sigma_2$ at \{$y=-0.5$\}]{
     \includegraphics[trim=10mm 5mm 10mm 0mm,clip,width=35mm,height=38mm]{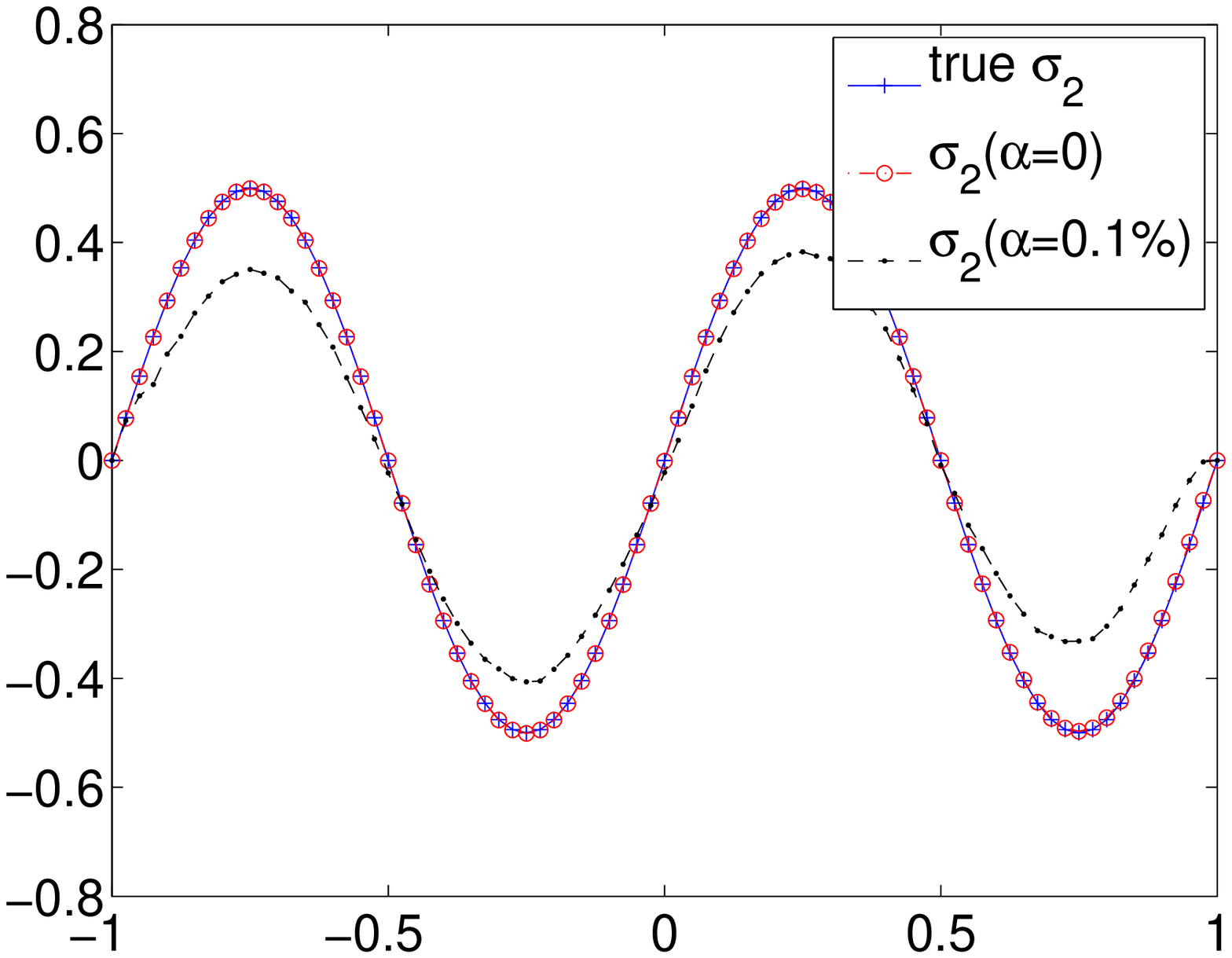} 
     \label{ex1rtau}
     }
     
     \subfigure[true $\sigma_3$]{
      \includegraphics[width=37mm,height=35mm]{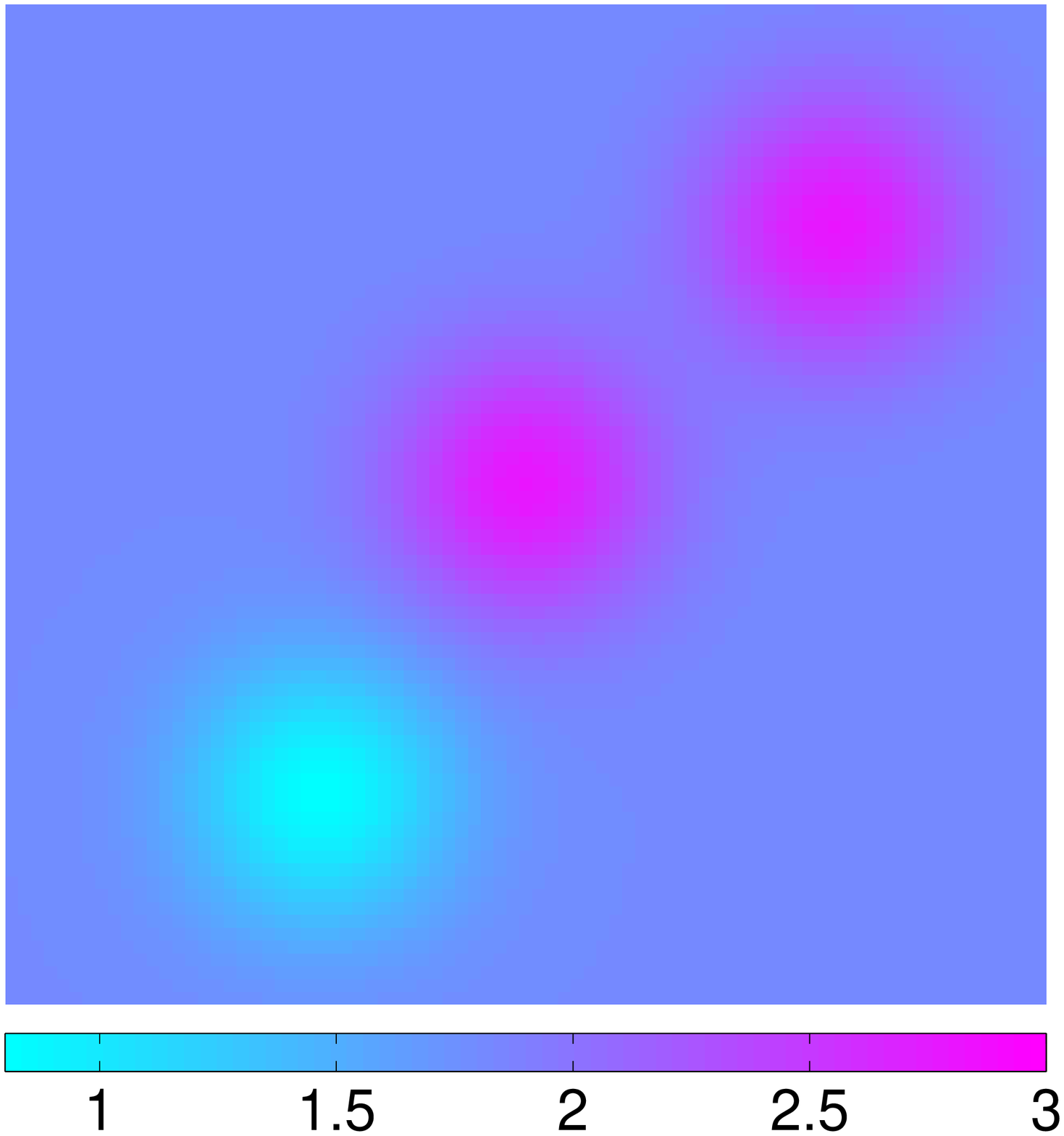}
      \label{ex1tbeta}
      } 
   \subfigure[$\sigma_3$ $(\alpha=0\%)$]{  
     \includegraphics[width=37mm,height=35mm]{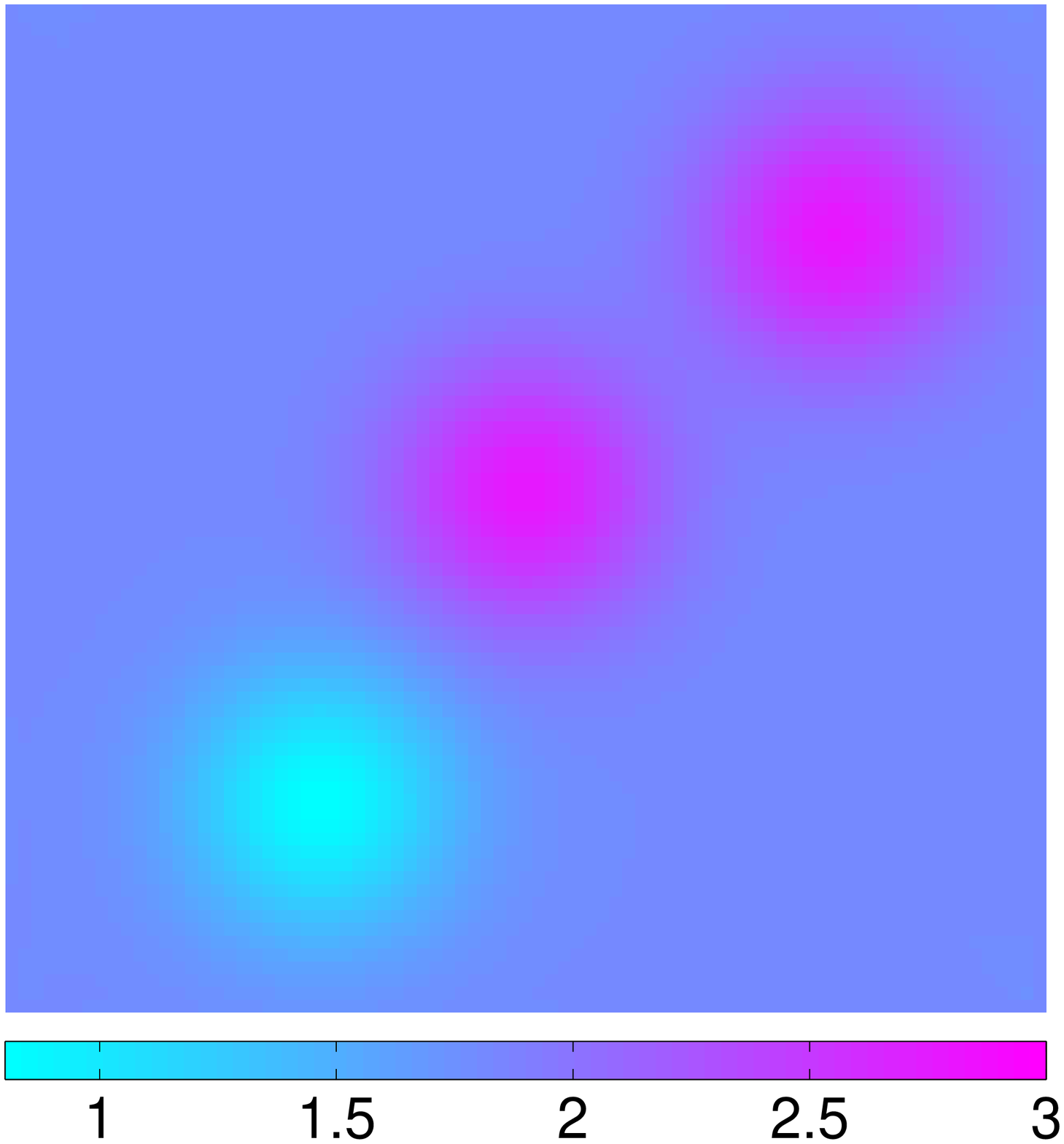}
     \label{ex1cbeta}
     }
   \subfigure[$\sigma_3$ ($\alpha=0.1\%$)]{ 
    \includegraphics[width=37mm,height=35mm]{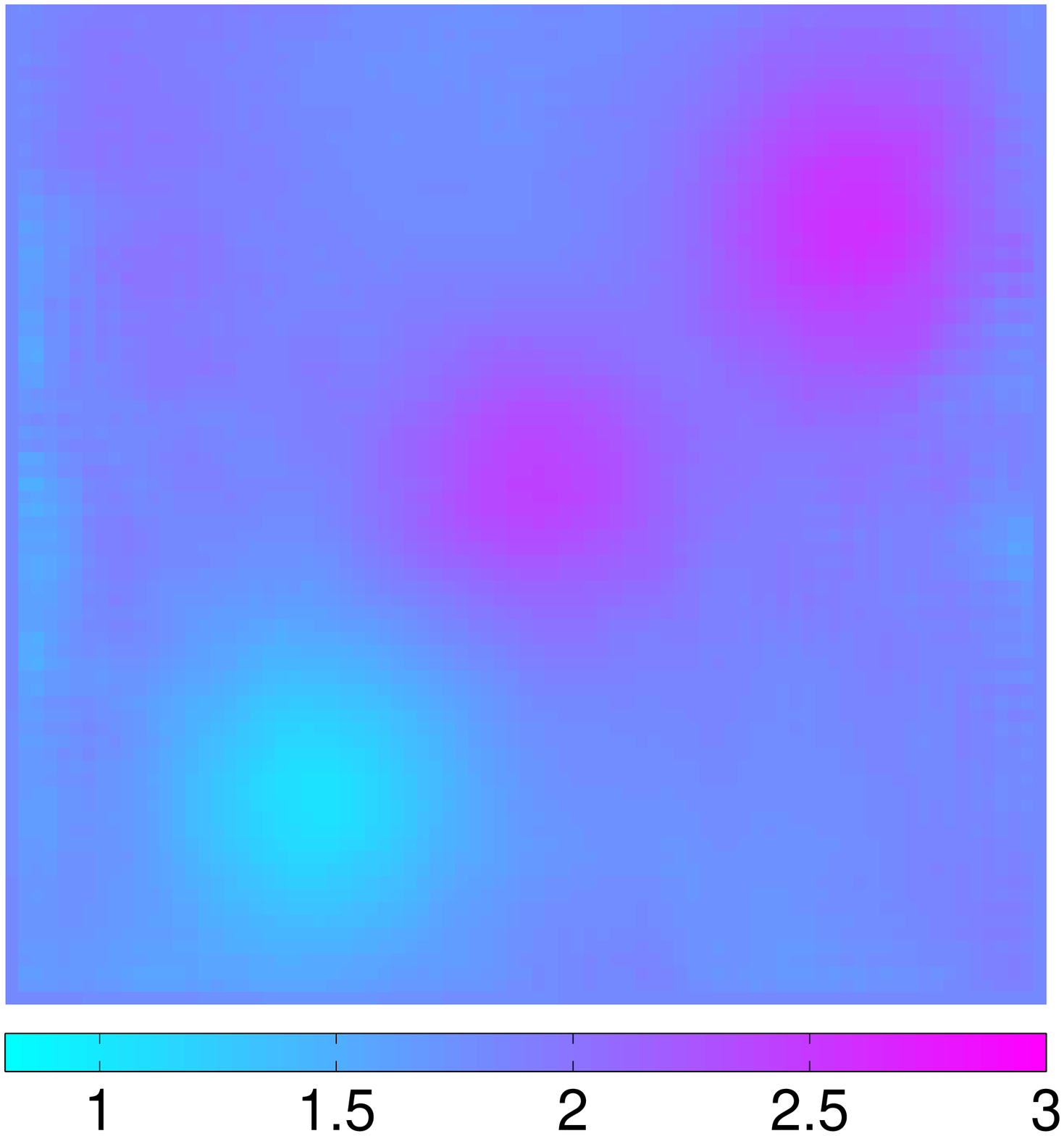}
    \label{ex1nbeta}
    }
   \subfigure[$\sigma_3$ at $\{y=0\}$]{ 
     \includegraphics[trim=10mm 5mm 10mm 0mm,clip,width=35mm,height=38mm]{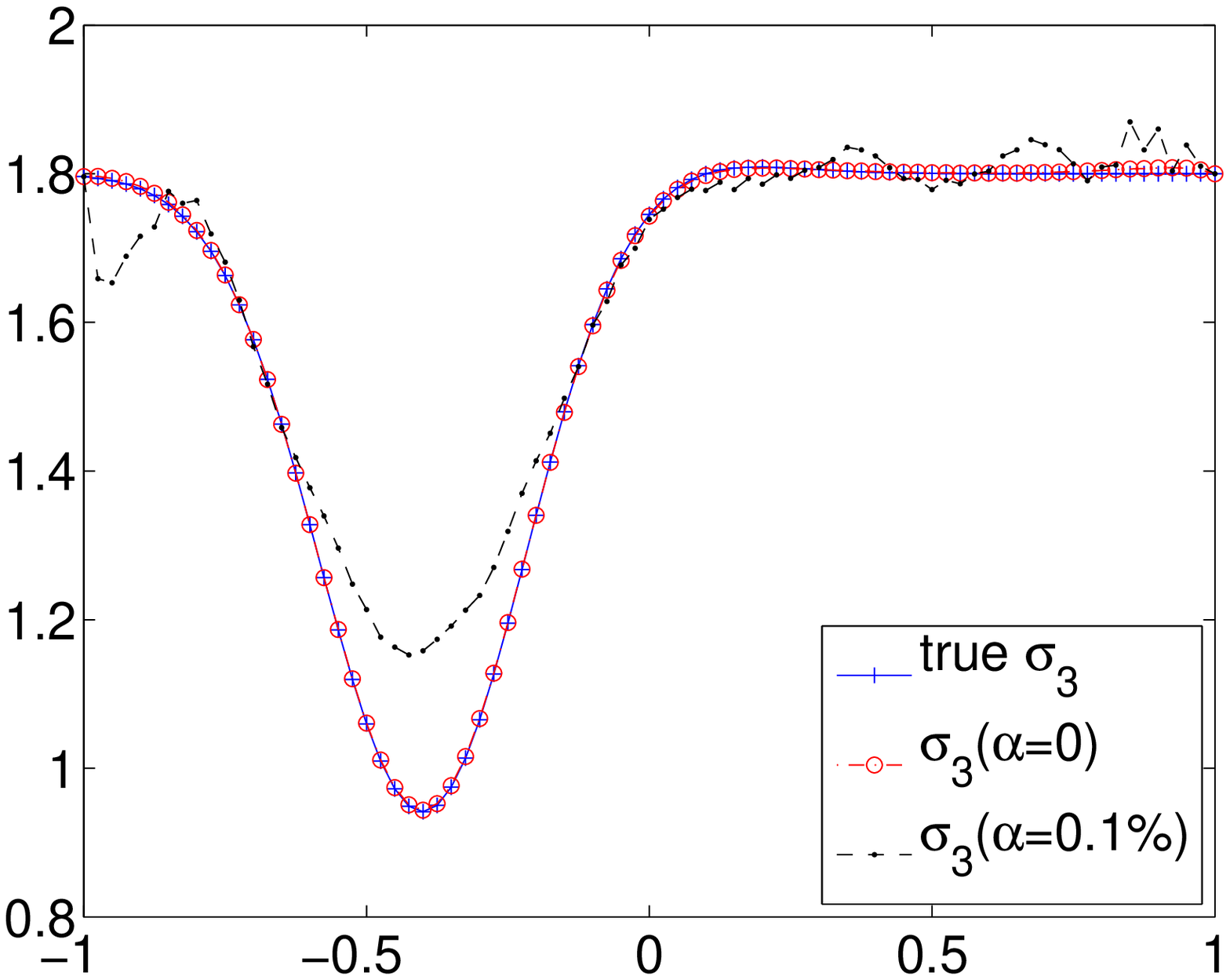} 
     \label{ex1rbeta}
     }
    \caption{$\sigma$ in Simulation 1. \subref{ex1txi}\&\subref{ex1ttau}\&\subref{ex1tbeta}: true values of $(\sigma_1, \sigma_2,\sigma_3)$. \subref{ex1cxi}\&\subref{ex1ctau}\&\subref{ex1cbeta}: reconstructions with noiseless data. \subref{ex1nxi}\&\subref{ex1ntau}\&\subref{ex1nbeta}: reconstructions with noisy data($\alpha=0.1\%$). \subref{ex1rxi}\&\subref{ex1rtau}\&\subref{ex1rbeta}: cross sections along $\{y=-0.5\}$.}
\label{E1sigma}
\end{figure}

\begin{figure}[htp]
  \centering
  \subfigure[true $\varepsilon_1$]{
      \includegraphics[width=37mm,height=35mm]{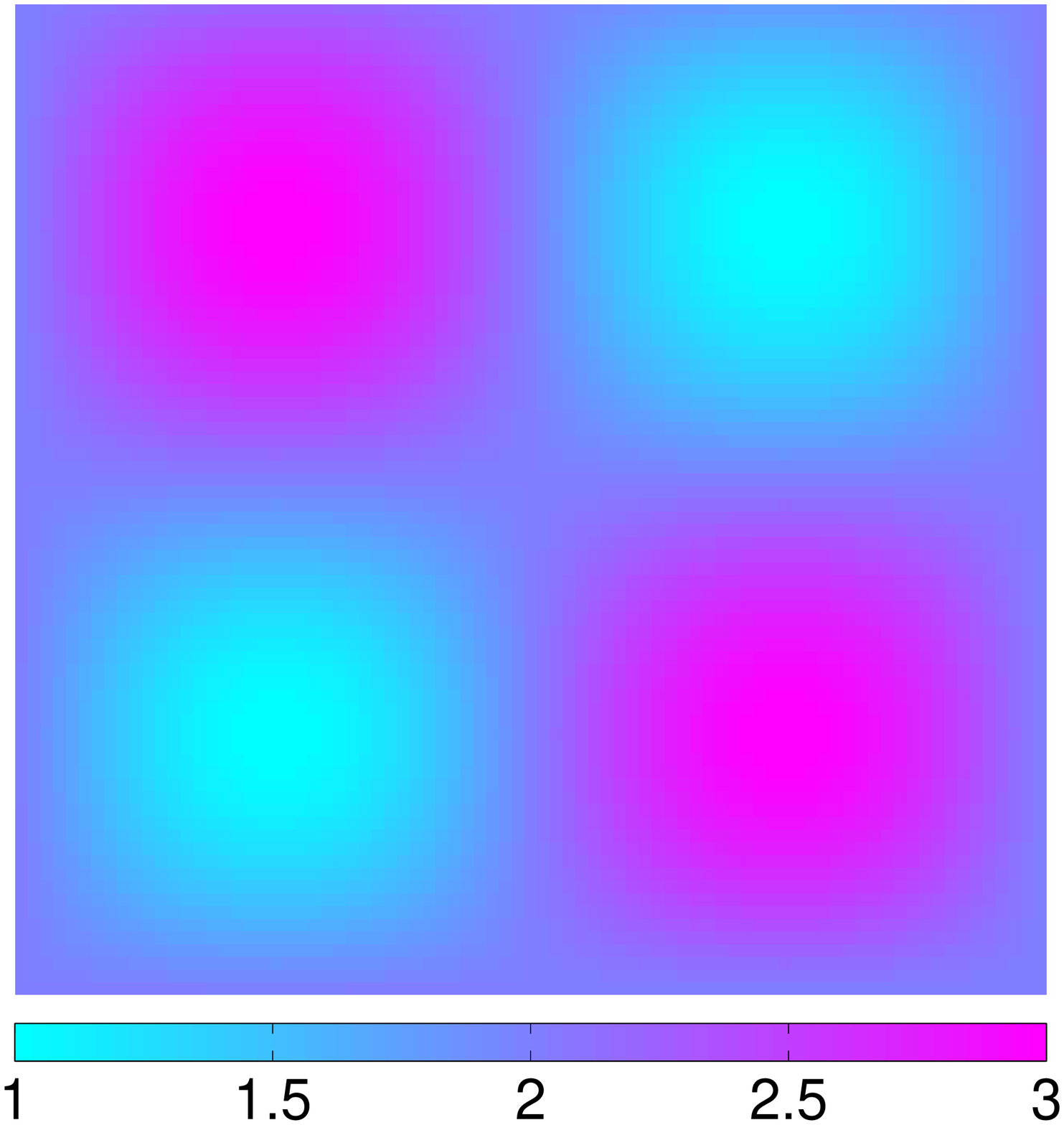}
      \label{ex1txi}
      } 
  \subfigure[$\varepsilon_1$ ($\alpha=0\%$)]{    
     \includegraphics[width=37mm,height=35mm]{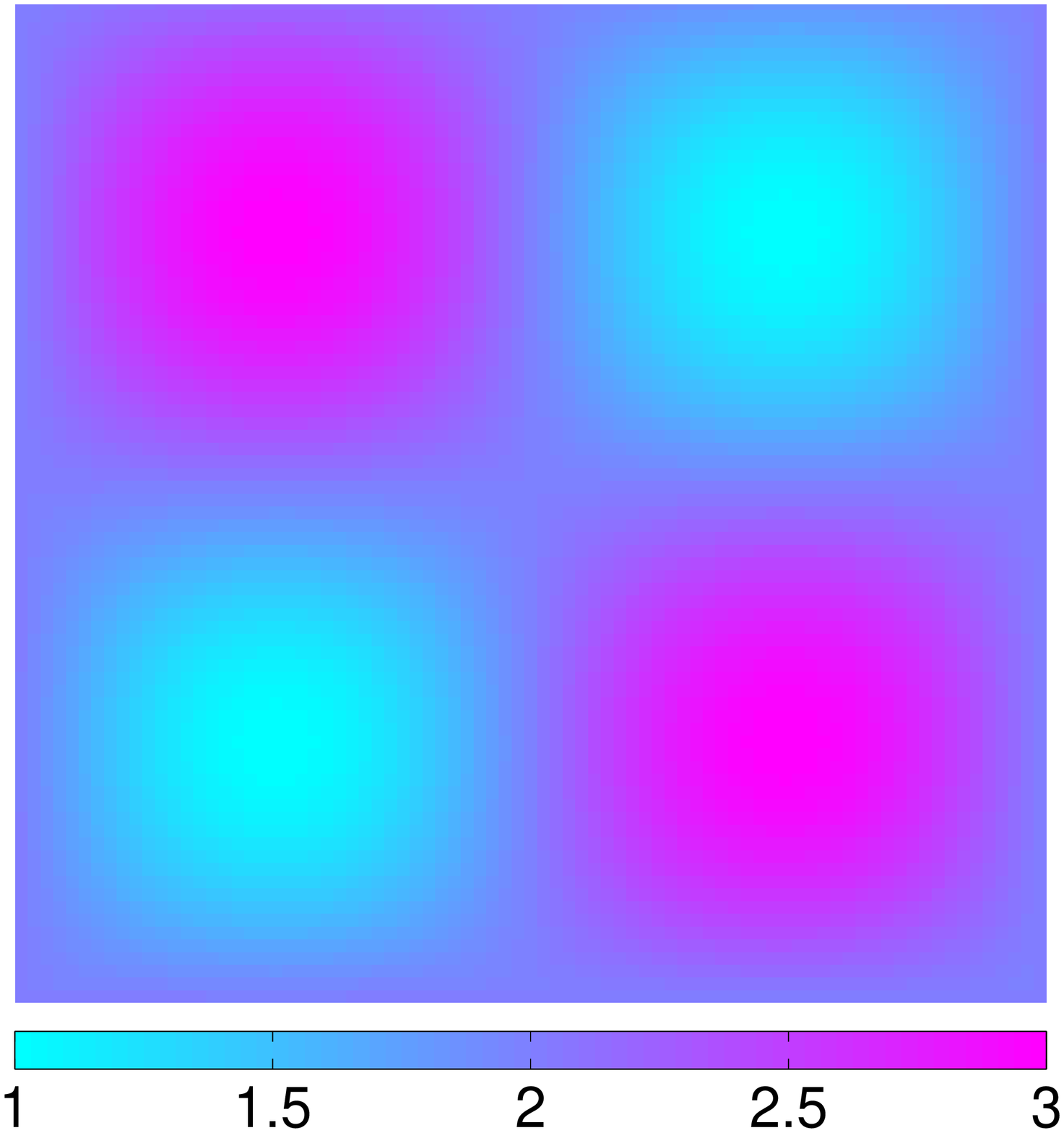}
     \label{ex1cxi}
     }
   \subfigure[$\varepsilon_1$ ($\alpha=0.1\%$)]{
    \includegraphics[width=37mm,height=35mm]{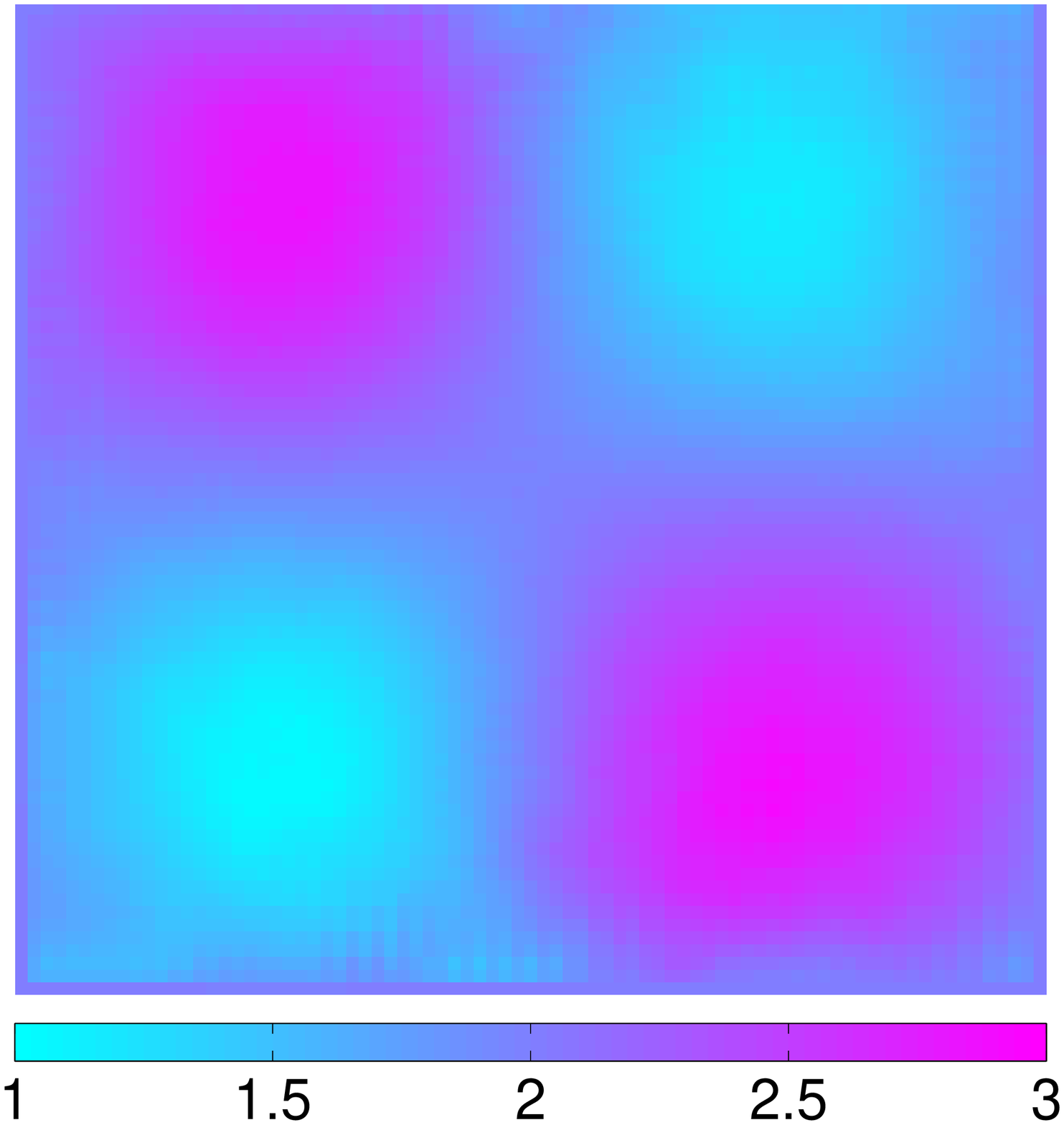}
    \label{ex1nxi}
    }
    \subfigure[$\varepsilon_1$ at \{$y=-0.5$\}]{
     \includegraphics[trim=10mm 5mm 10mm 0mm,clip,width=35mm,height=38mm]{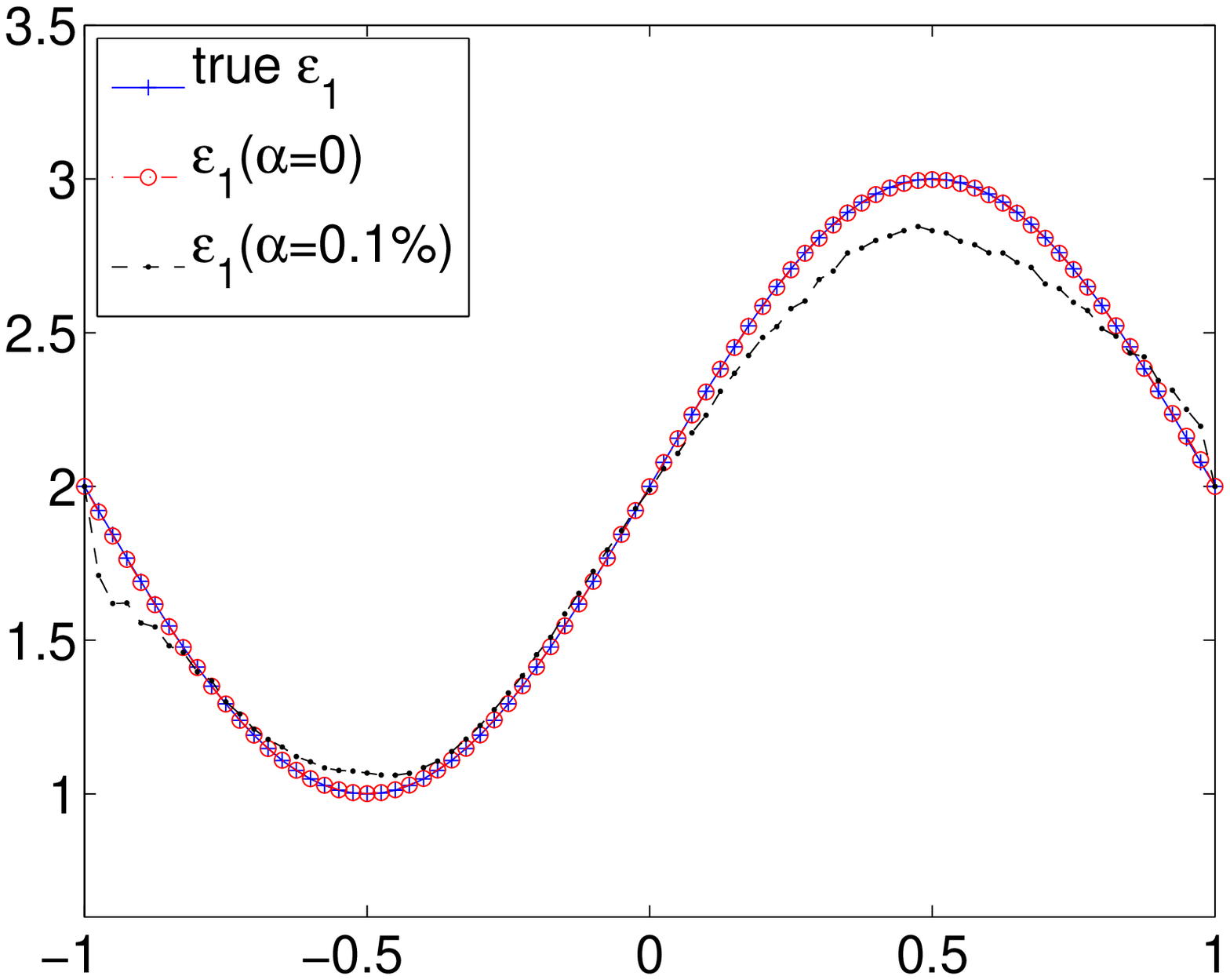} 
     \label{ex1rxi}
     }

     \subfigure[true $\varepsilon_2$]{
      \includegraphics[width=37mm,height=35mm]{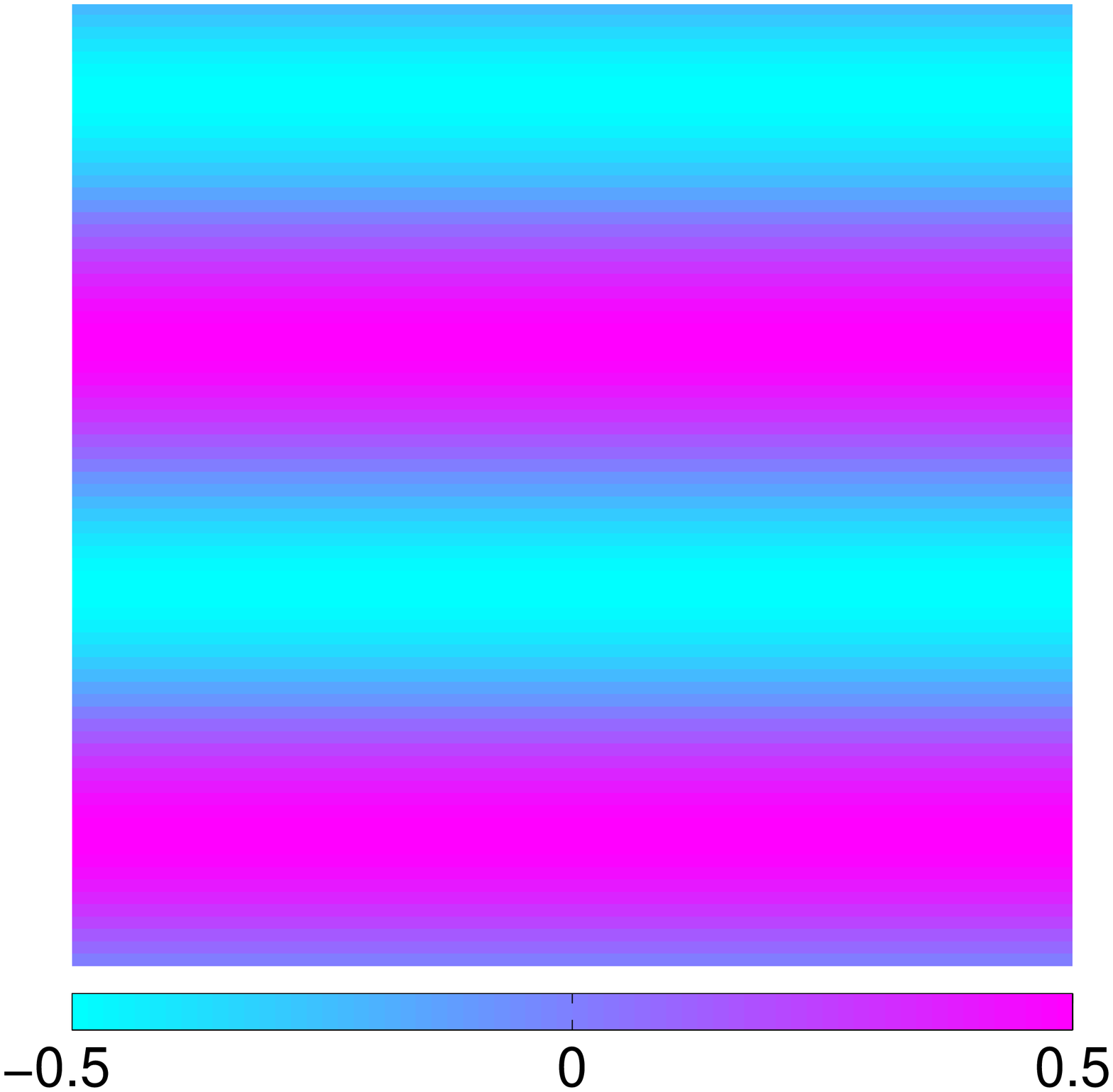}
      \label{ex1ttau}
      } 
    \subfigure[$\varepsilon_2$ ($\alpha=0\%$)]{ 
     \includegraphics[width=37mm,height=35mm]{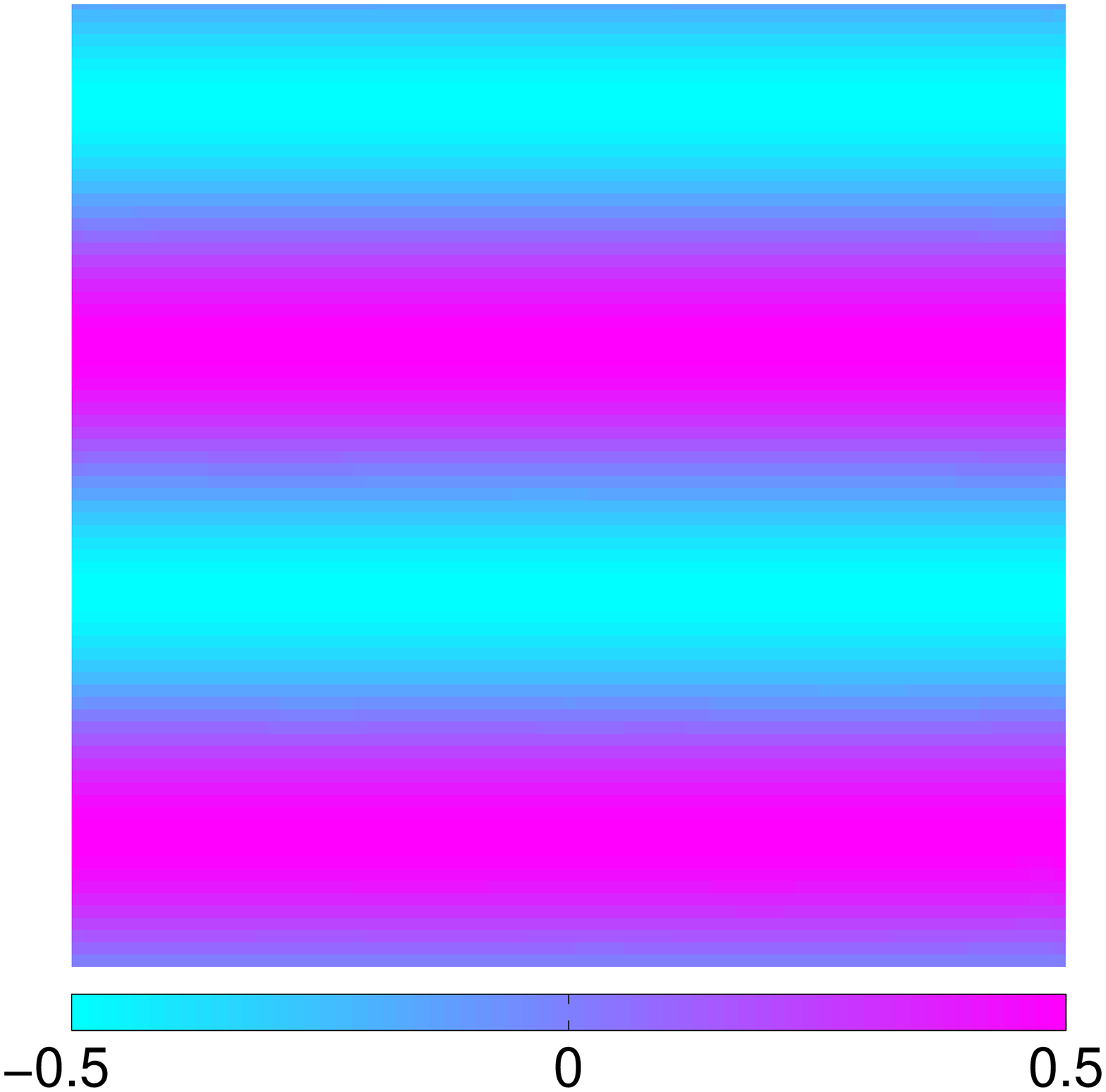}
     \label{ex1ctau}
     }
   \subfigure[$\varepsilon_2$ ($\alpha=0.1\%$)]{ 
    \includegraphics[width=37mm,height=35mm]{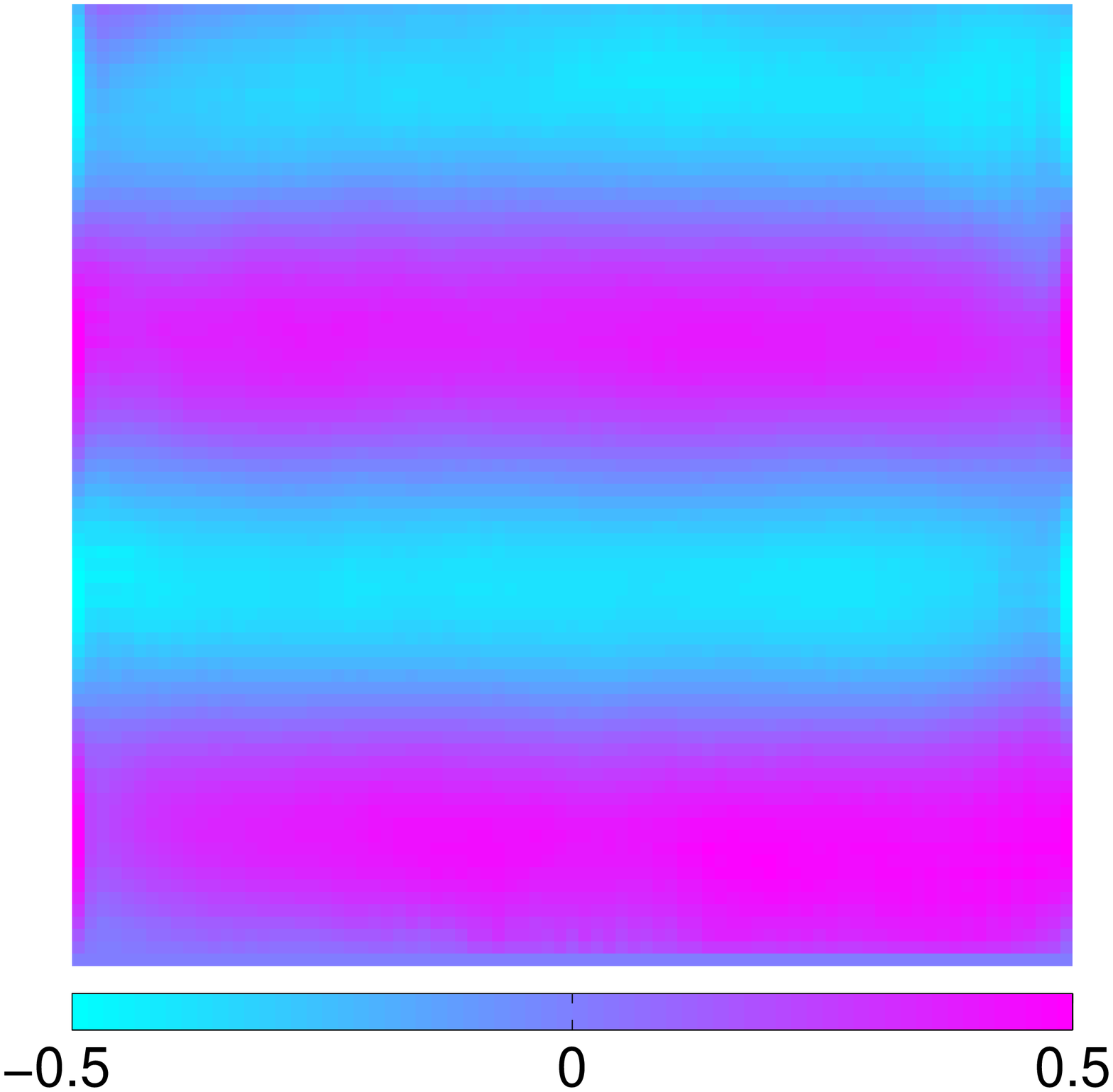}
    \label{ex1ntau}
    }
     \subfigure[$\varepsilon_2$ at \{$y=-0.5$\}]{
     \includegraphics[trim=10mm 5mm 10mm 0mm,clip,width=35mm,height=38mm]{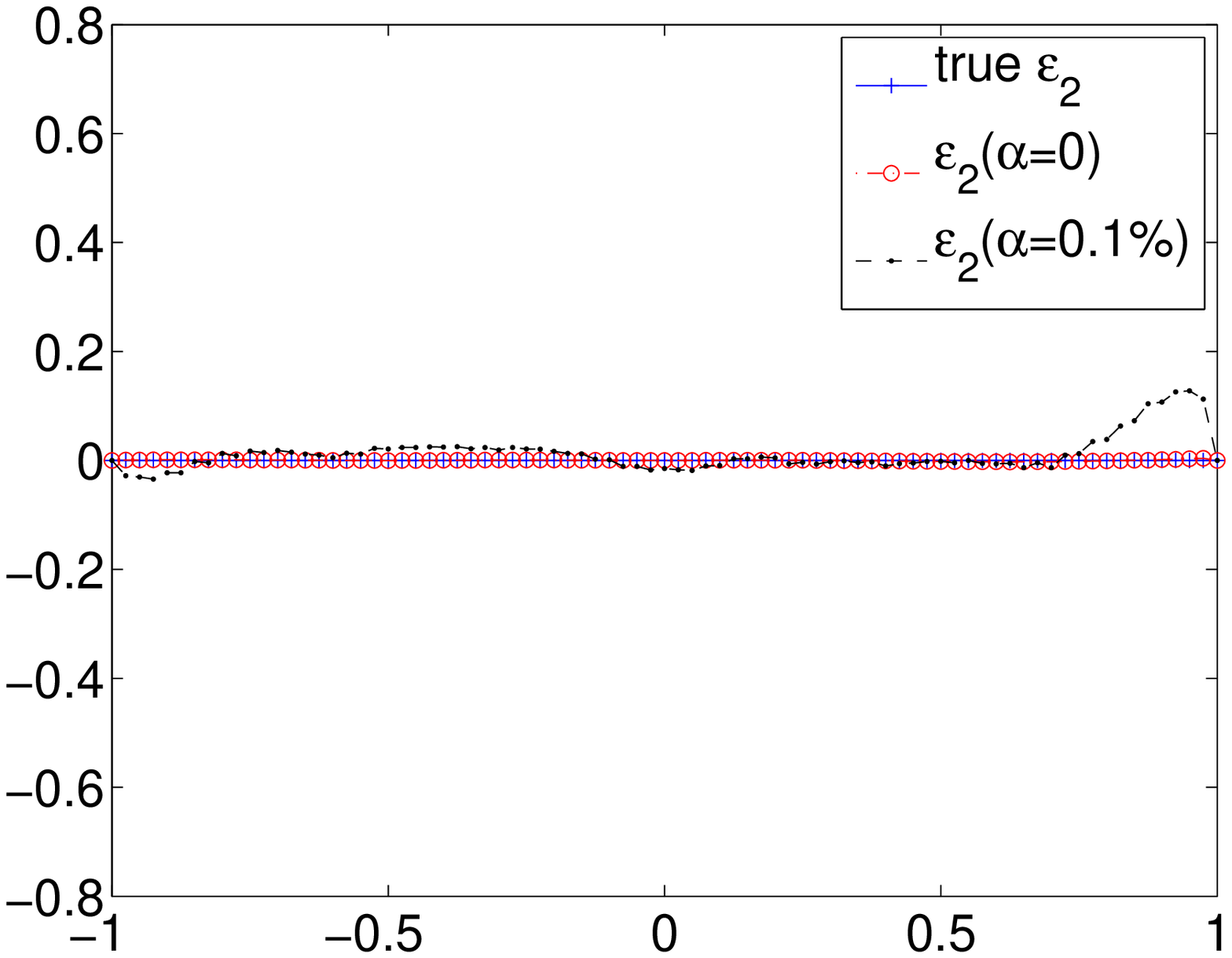} 
     \label{ex1rtau}
     }
     
     \subfigure[true $\varepsilon_3$]{
      \includegraphics[width=37mm,height=35mm]{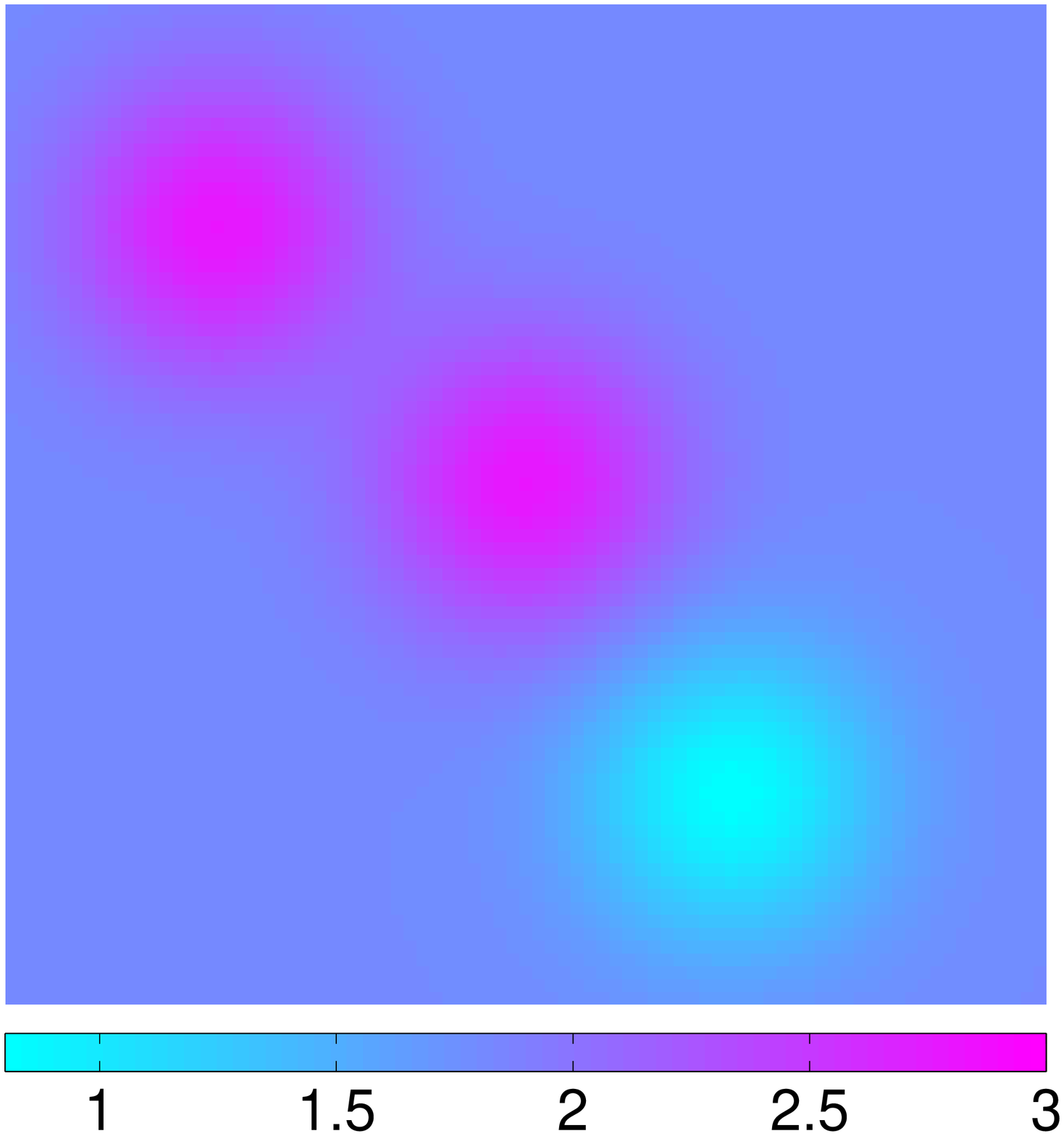}
      \label{ex1tbeta}
      } 
   \subfigure[$\varepsilon_3$ $(\alpha=0\%)$]{  
     \includegraphics[width=37mm,height=35mm]{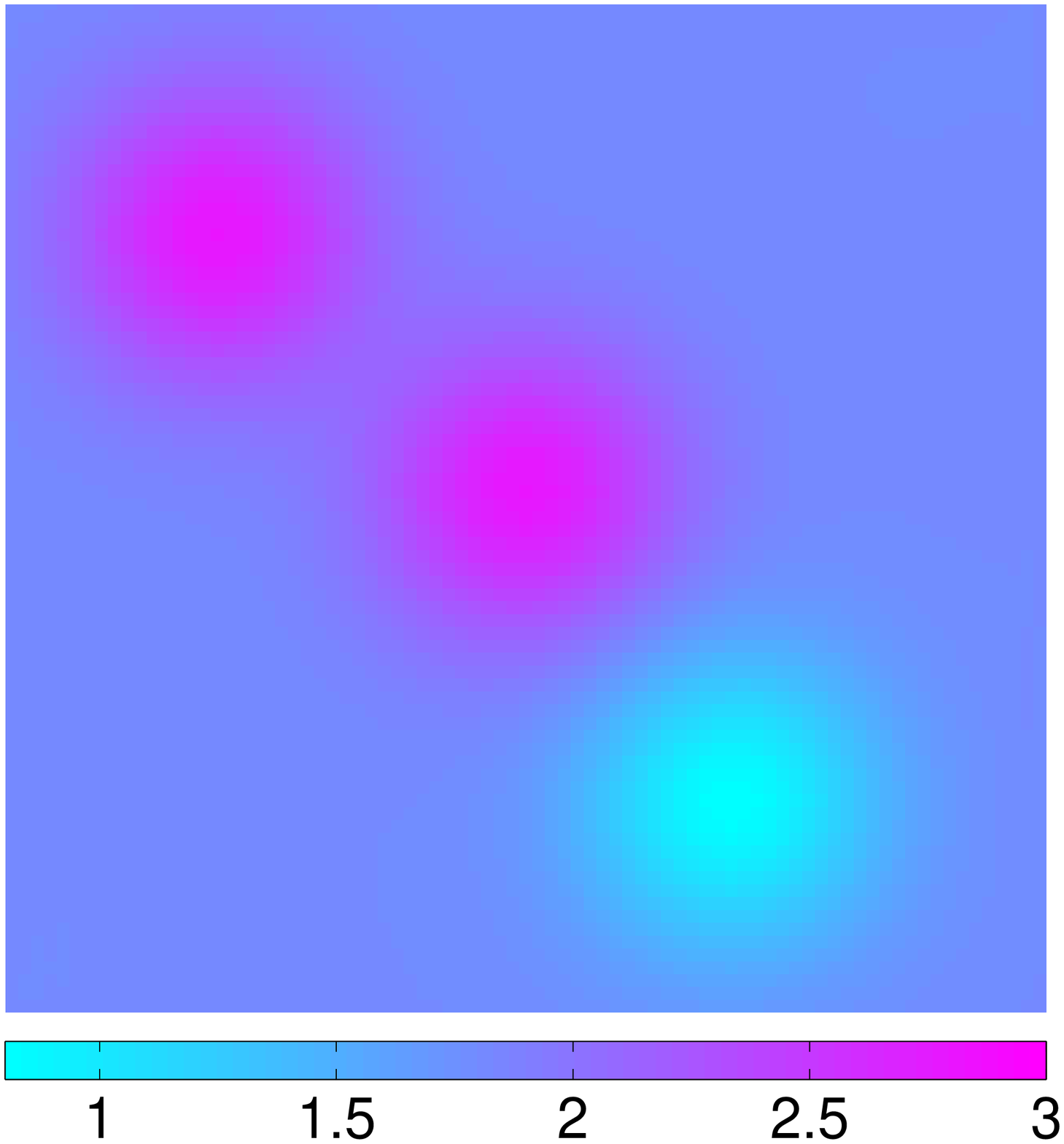}
     \label{ex1cbeta}
     }
   \subfigure[$\varepsilon_3$ ($\alpha=0.1\%$)]{ 
    \includegraphics[width=37mm,height=35mm]{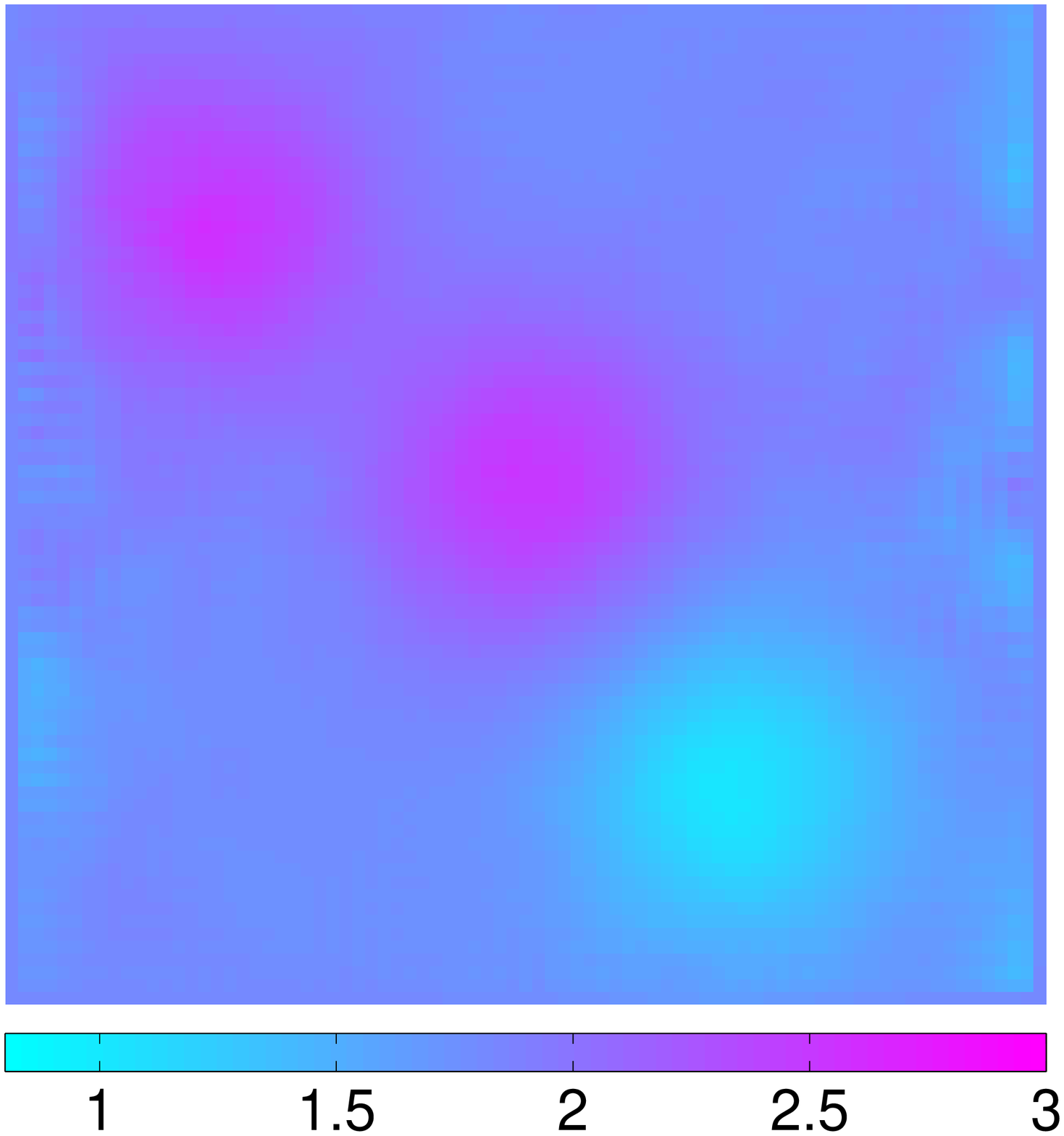}
    \label{ex1nbeta}
    }
   \subfigure[$\varepsilon_3$ at $\{y=0\}$]{ 
     \includegraphics[trim=10mm 5mm 10mm 0mm,clip,width=35mm,height=38mm]{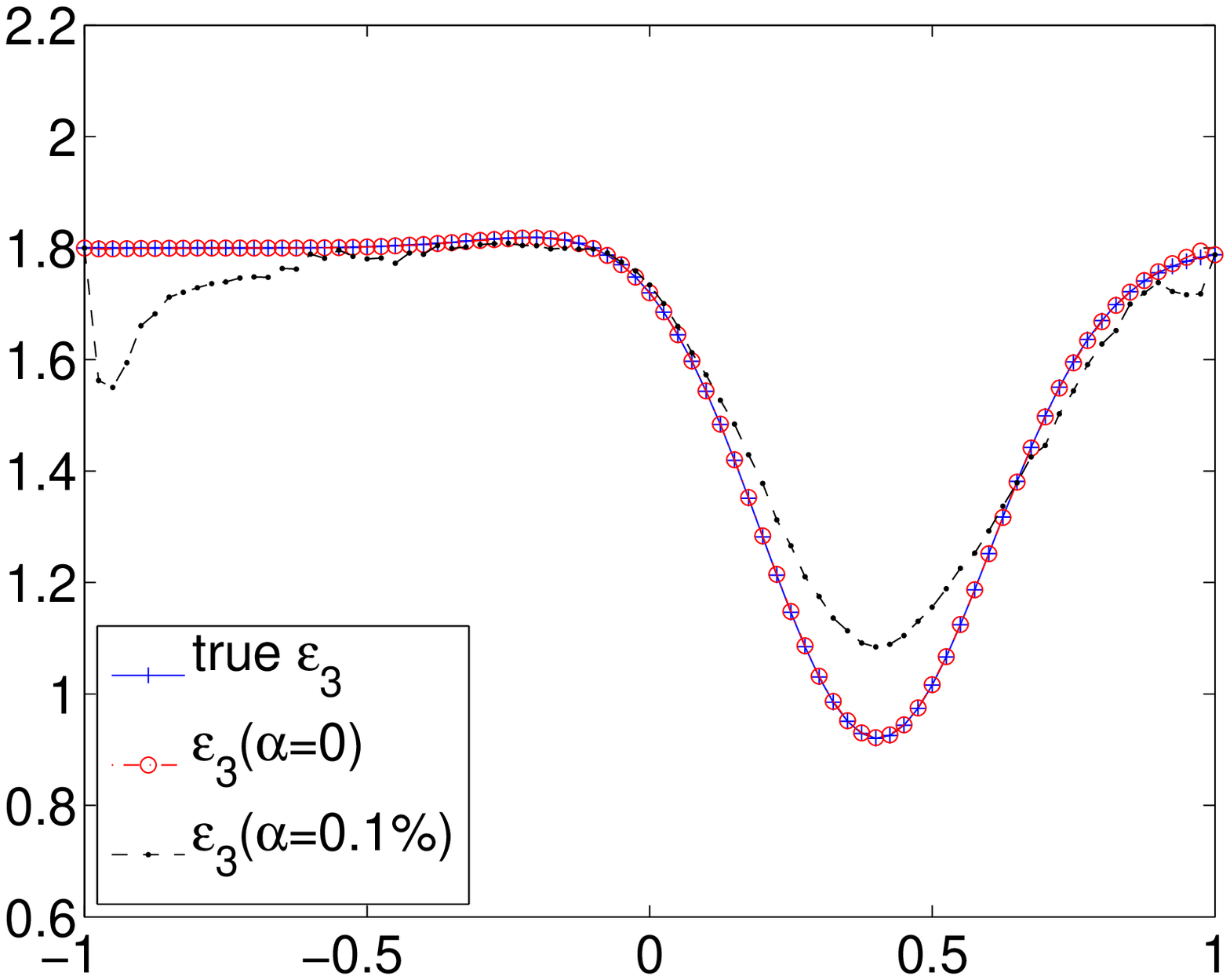} 
     \label{ex1rbeta}
     }
    \caption{$\varepsilon$ in Simulation 1. \subref{ex1txi}\&\subref{ex1ttau}\&\subref{ex1tbeta}: true values of $(\varepsilon_1, \varepsilon_2,\varepsilon_3)$. \subref{ex1cxi}\&\subref{ex1ctau}\&\subref{ex1cbeta}: reconstructions with noiseless data. \subref{ex1nxi}\&\subref{ex1ntau}\&\subref{ex1nbeta}: reconstructions with noisy data($\alpha=0.1\%$). \subref{ex1rxi}\&\subref{ex1rtau}\&\subref{ex1rbeta}: cross sections along $\{y=-0.5\}$.}
\label{E1epsilon}
\end{figure}

\paragraph{Simulation 2.}In this experiment, we attempt to reconstruct piecewise constant coefficients. Reconstructions with both noiseless and noisy data are performed with $l_1$-regularization using the split Bregman iteration method. The noise level is $\alpha=0.1\%$. The results of the numerical experiment are shown in Figure \ref{E2sigma} and \ref{E2epsilon}. From the figures, we observe that the singularities of the coefficients create minor artifacts on the reconstructions and the error in the reconstruction is larger at the discontinuities than in the rest of the domain. The relative $L^2$ errors in the reconstructions are $\mathcal{E}^C_{\sigma_1}=4.0\%$, $\mathcal{E}^N_{\sigma_1}=17.6\%$, $\mathcal{E}^C_{\sigma_2}=12.8\%$, $\mathcal{E}^N_{\sigma_2}=48.1\%$, $\mathcal{E}^C_{\sigma_3}=4.5\%$, $\mathcal{E}^N_{\sigma_3}=16.5\%$; $\mathcal{E}^C_{\varepsilon_1}=0.1\%$, $\mathcal{E}^N_{\varepsilon_1}=16.3\%$, $\mathcal{E}^C_{\varepsilon_2}=0.5\%$, $\mathcal{E}^N_{\varepsilon_2}=35.2\%$, $\mathcal{E}^C_{\varepsilon_3}=0.1\%$, $\mathcal{E}^N_{\varepsilon_3}=16.2\%$.

\begin{figure}[htp]
  \centering
  \subfigure[true $\sigma_1$]{
      \includegraphics[width=37mm,height=35mm]{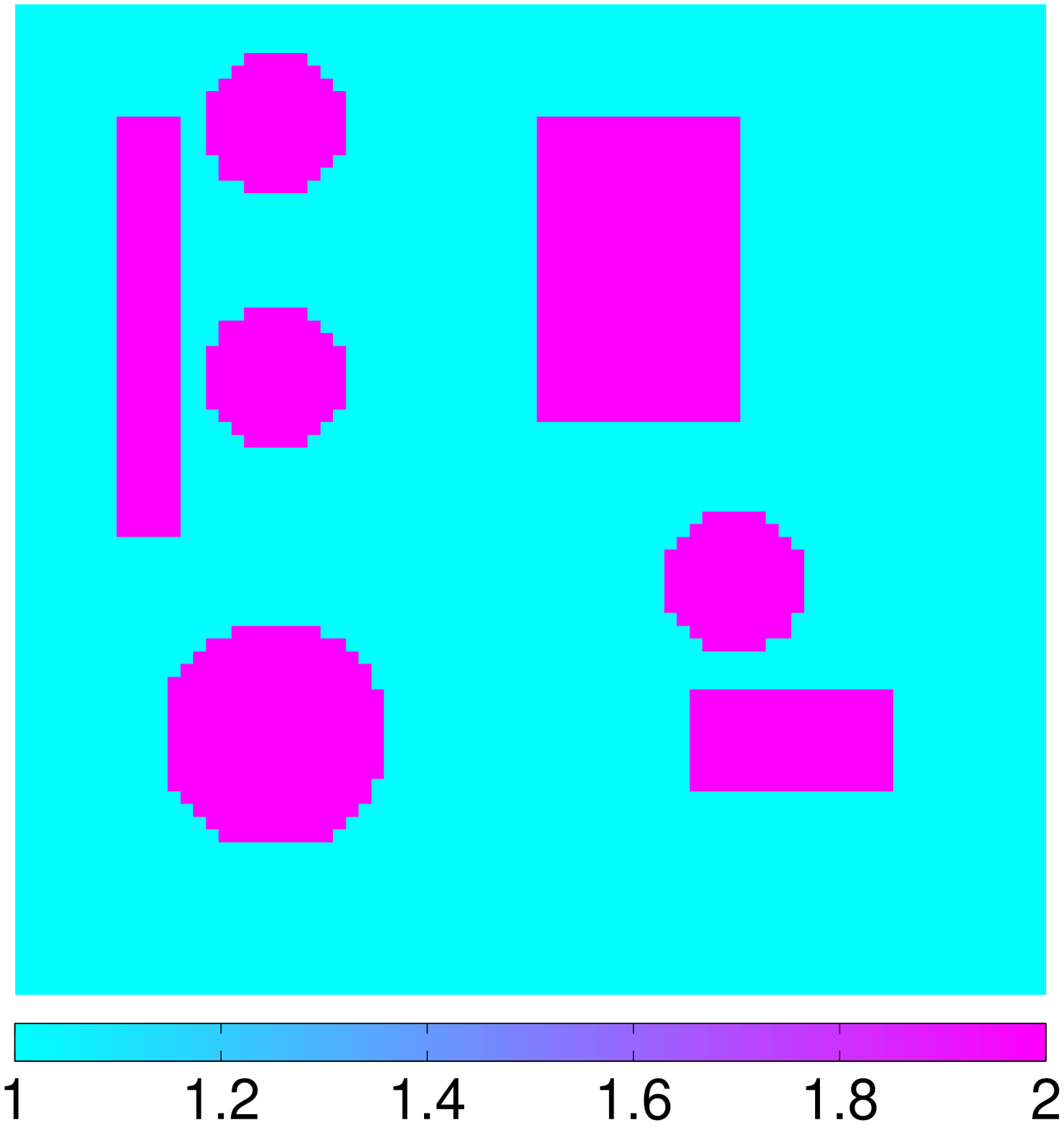}
      \label{ex1txi}
      } 
  \subfigure[$\sigma$ ($\alpha=0\%$)]{    
     \includegraphics[width=37mm,height=35mm]{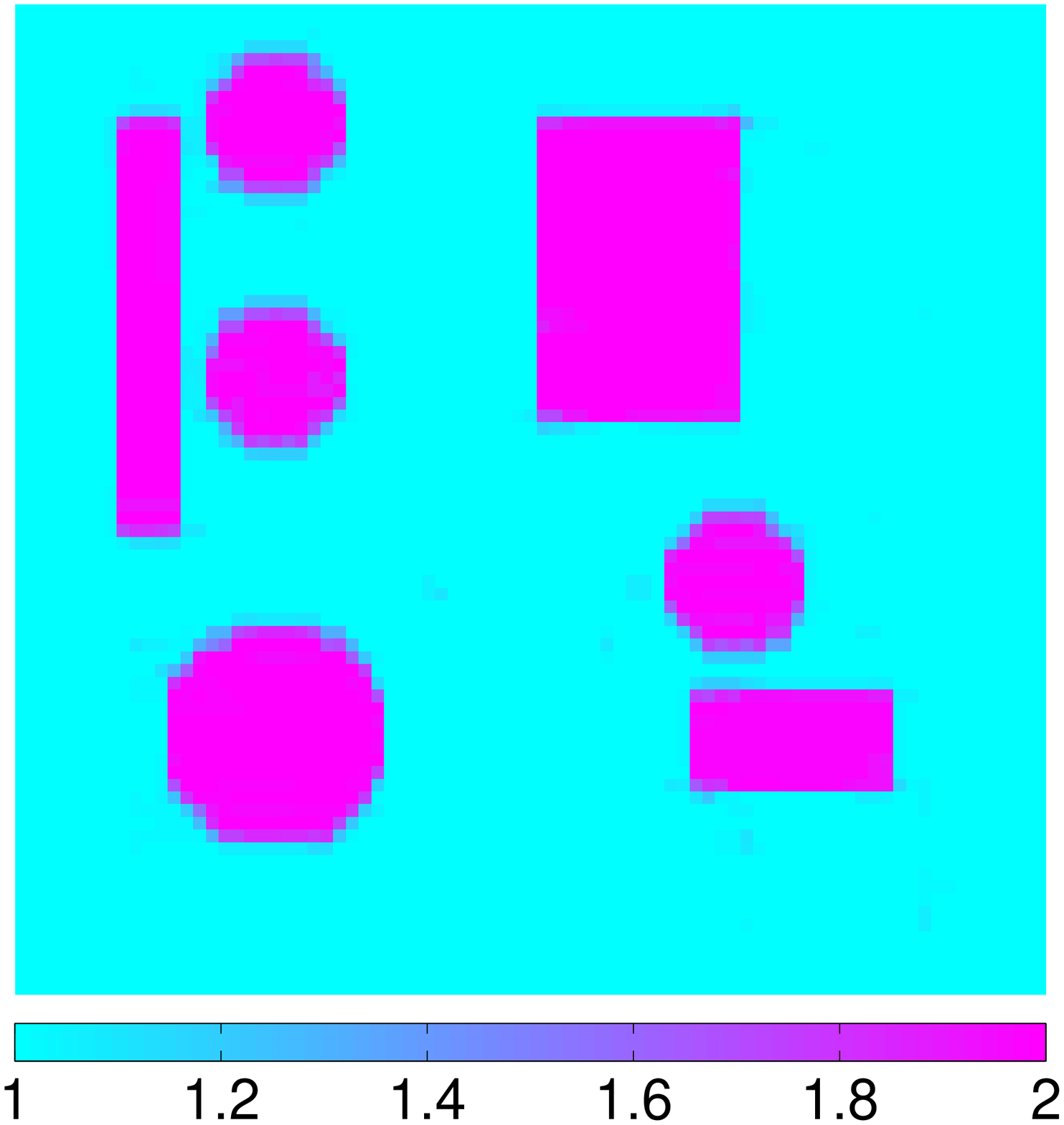}
     \label{ex1cxi}
     }
   \subfigure[$\sigma$ ($\alpha=0.1\%$)]{
    \includegraphics[width=37mm,height=35mm]{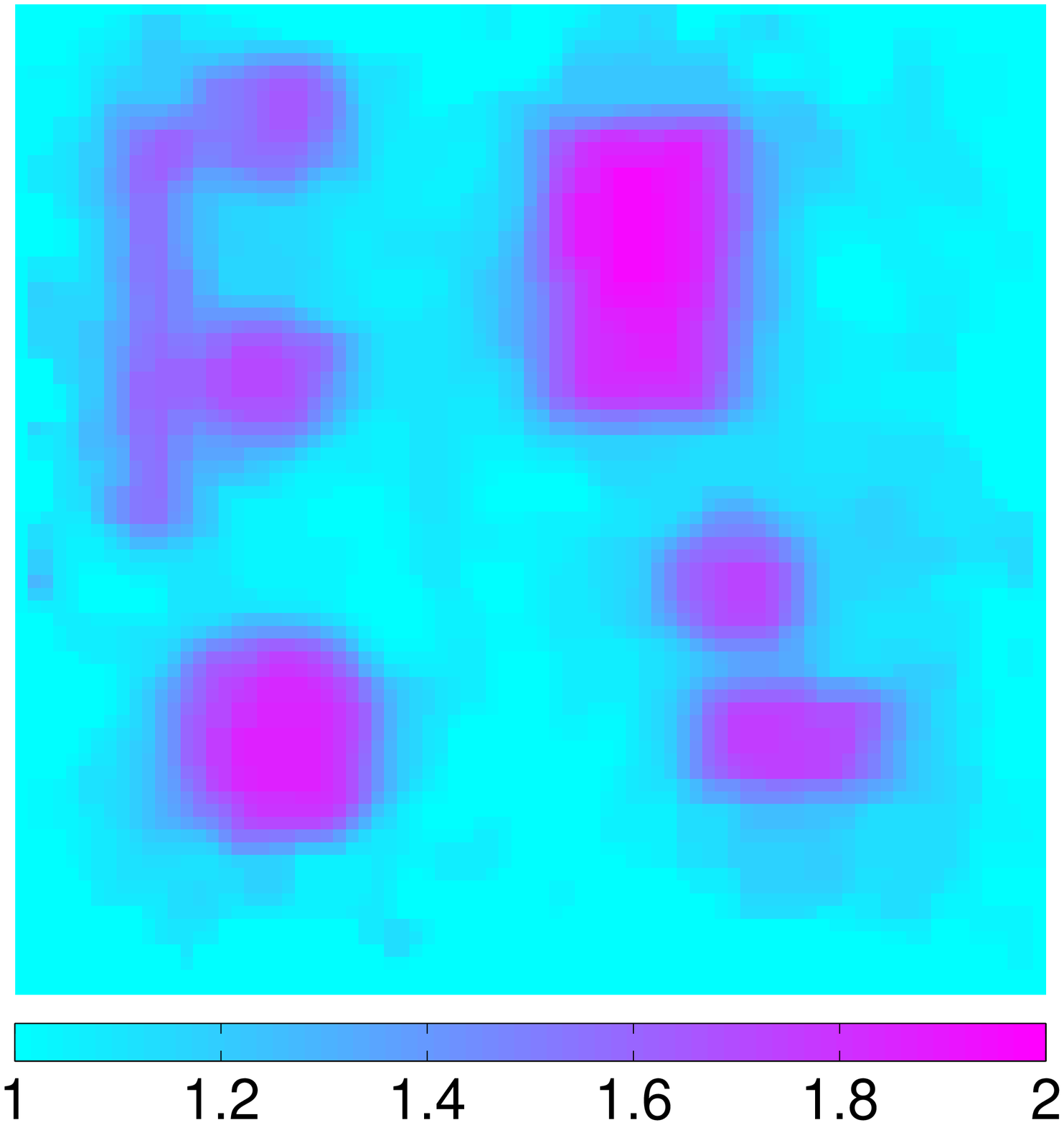}
    \label{ex1nxi}
    }
    \subfigure[$\sigma_1$ at \{$y=-0.5$\}]{
     \includegraphics[trim=10mm 5mm 10mm 0mm,clip,width=35mm,height=38mm]{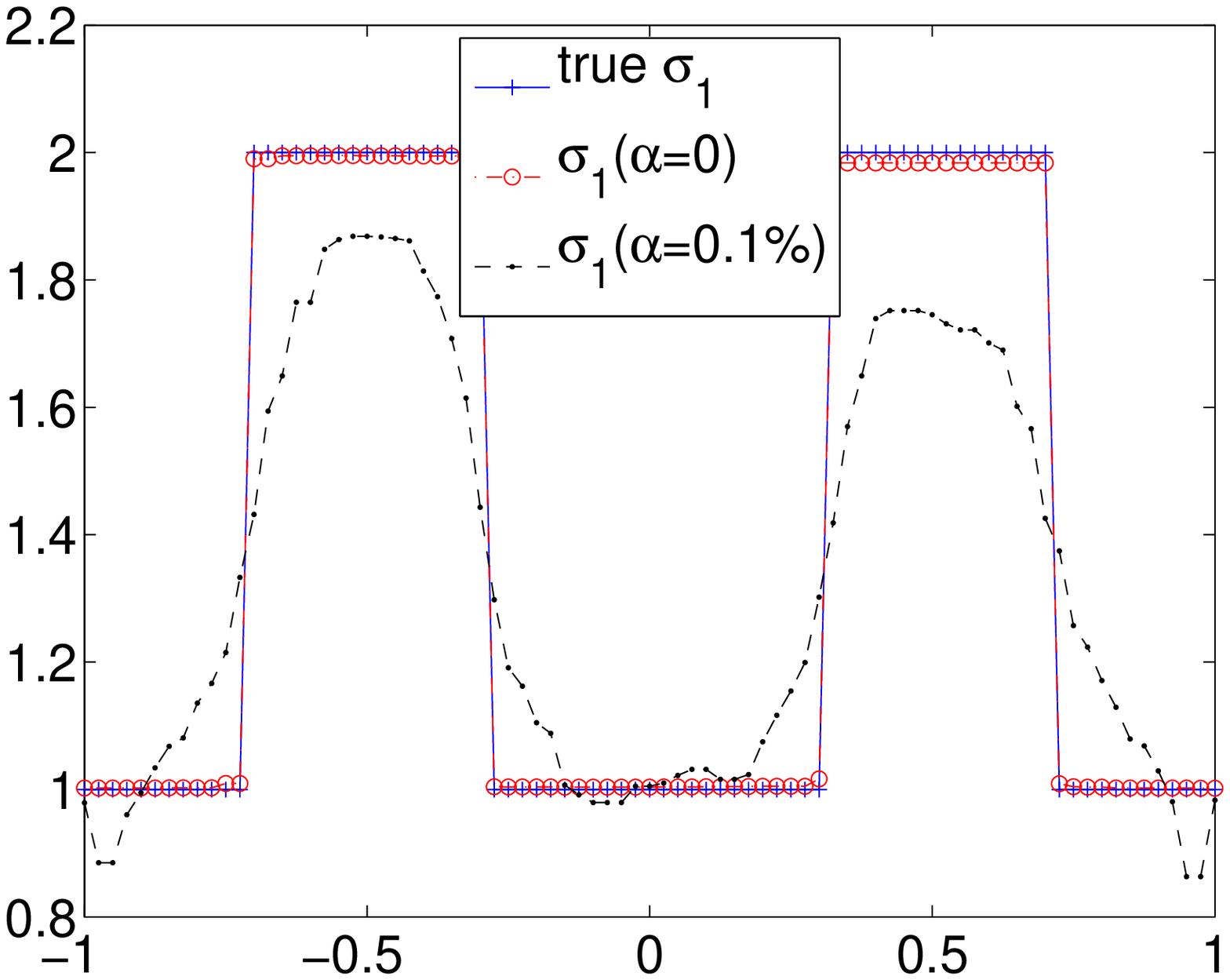} 
     \label{ex1rxi}
     }

     \subfigure[true $\sigma_2$]{
      \includegraphics[width=37mm,height=35mm]{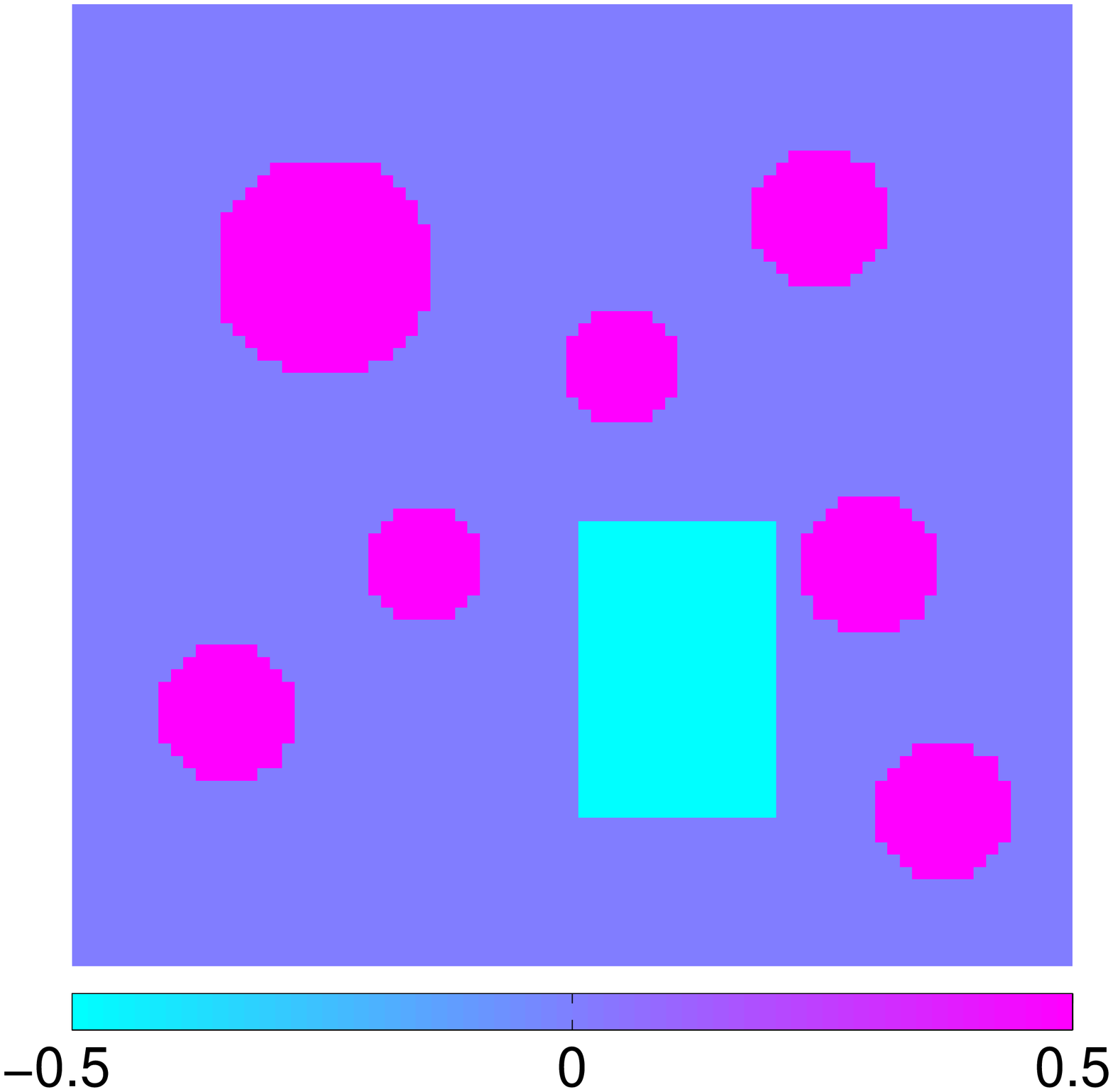}
      \label{ex1ttau}
      } 
    \subfigure[$\sigma_2$ ($\alpha=0\%$)]{ 
     \includegraphics[width=37mm,height=35mm]{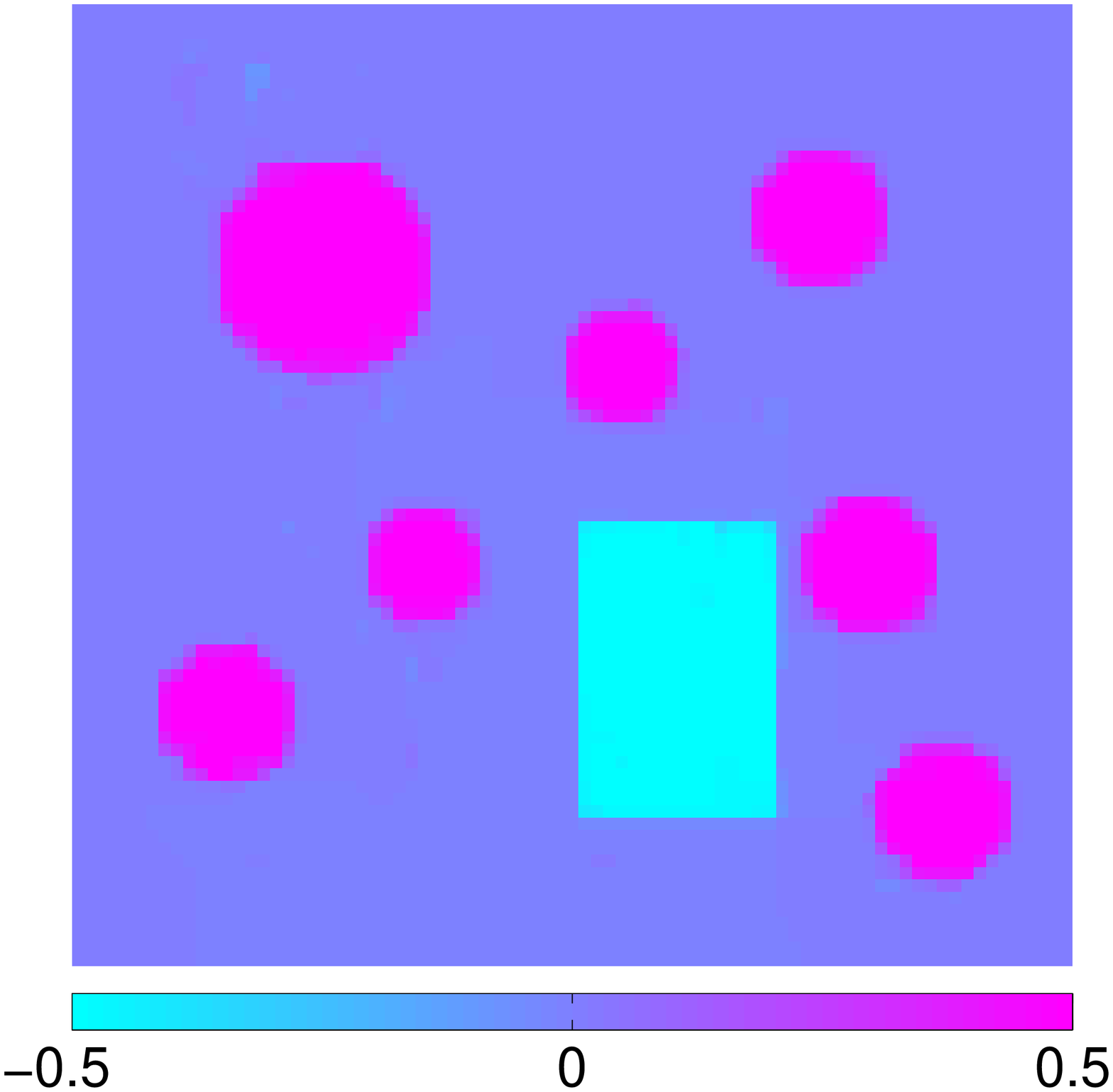}
     \label{ex1ctau}
     }
   \subfigure[$\sigma_2$ ($\alpha=0.1\%$)]{ 
    \includegraphics[width=37mm,height=35mm]{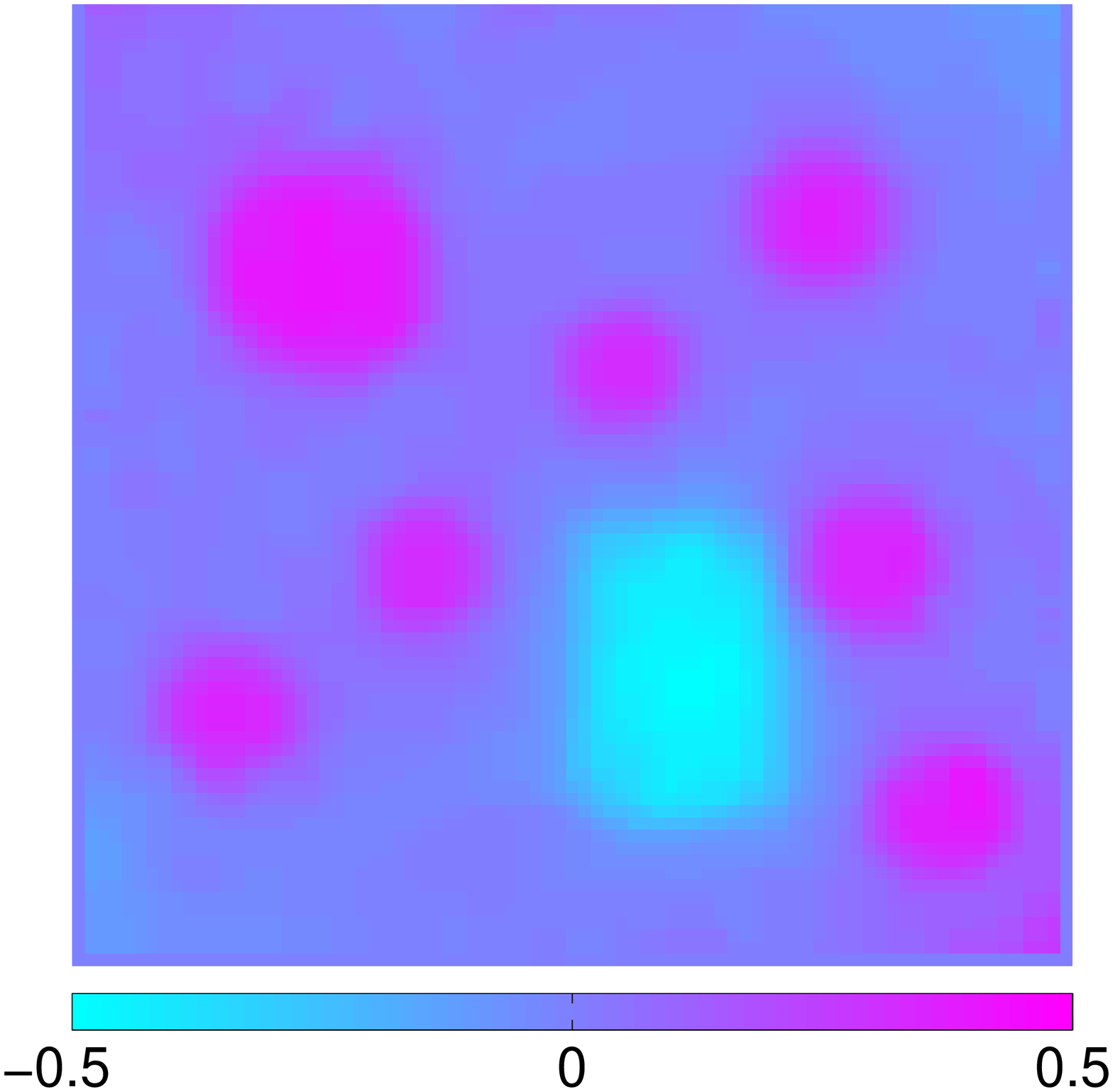}
    \label{ex1ntau}
    }
     \subfigure[$\sigma_2$ at \{$y=-0.5$\}]{
     \includegraphics[trim=10mm 5mm 10mm 0mm,clip,width=35mm,height=38mm]{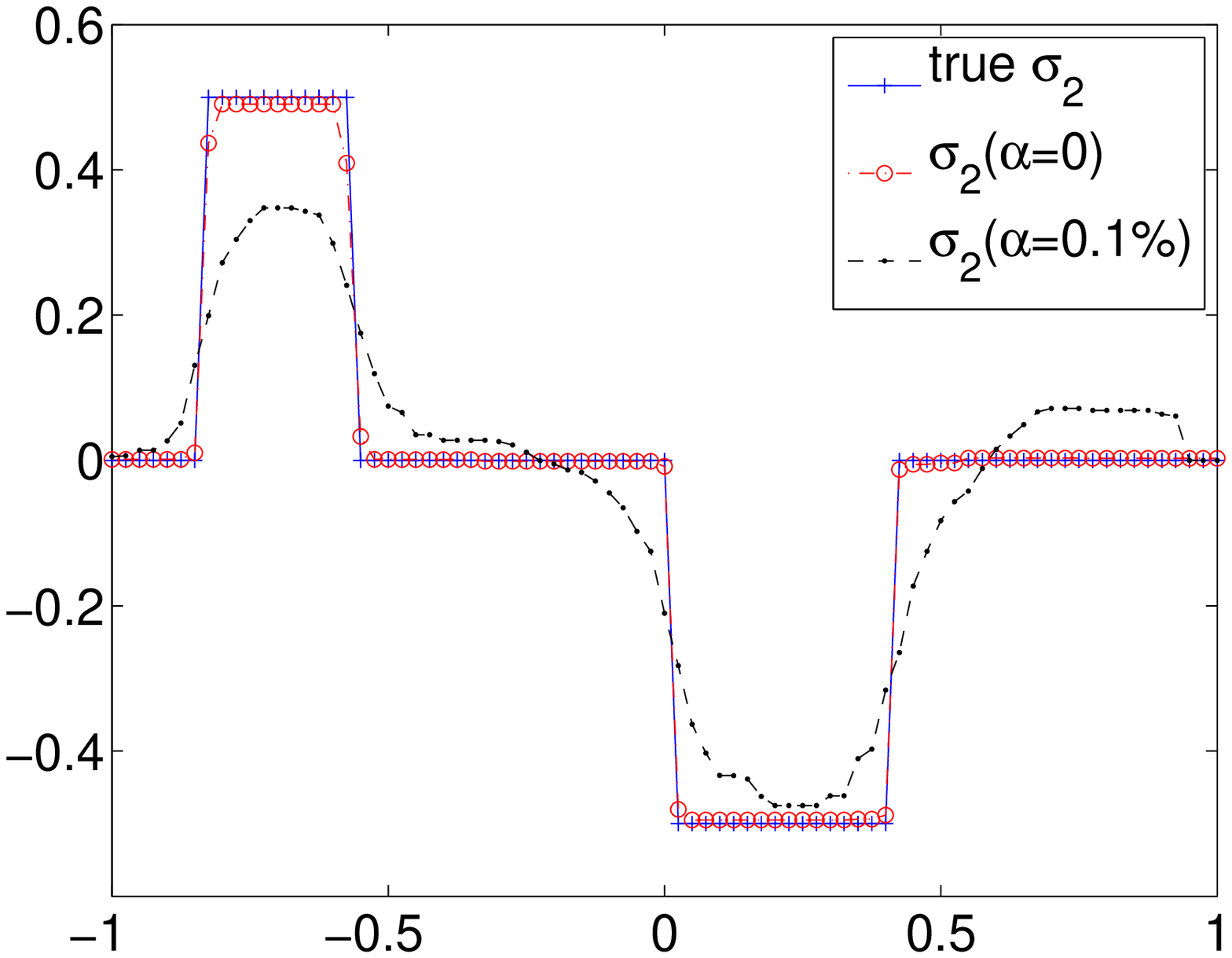} 
     \label{ex1rtau}
     }
     
     \subfigure[true $\sigma_3$]{
      \includegraphics[width=37mm,height=35mm]{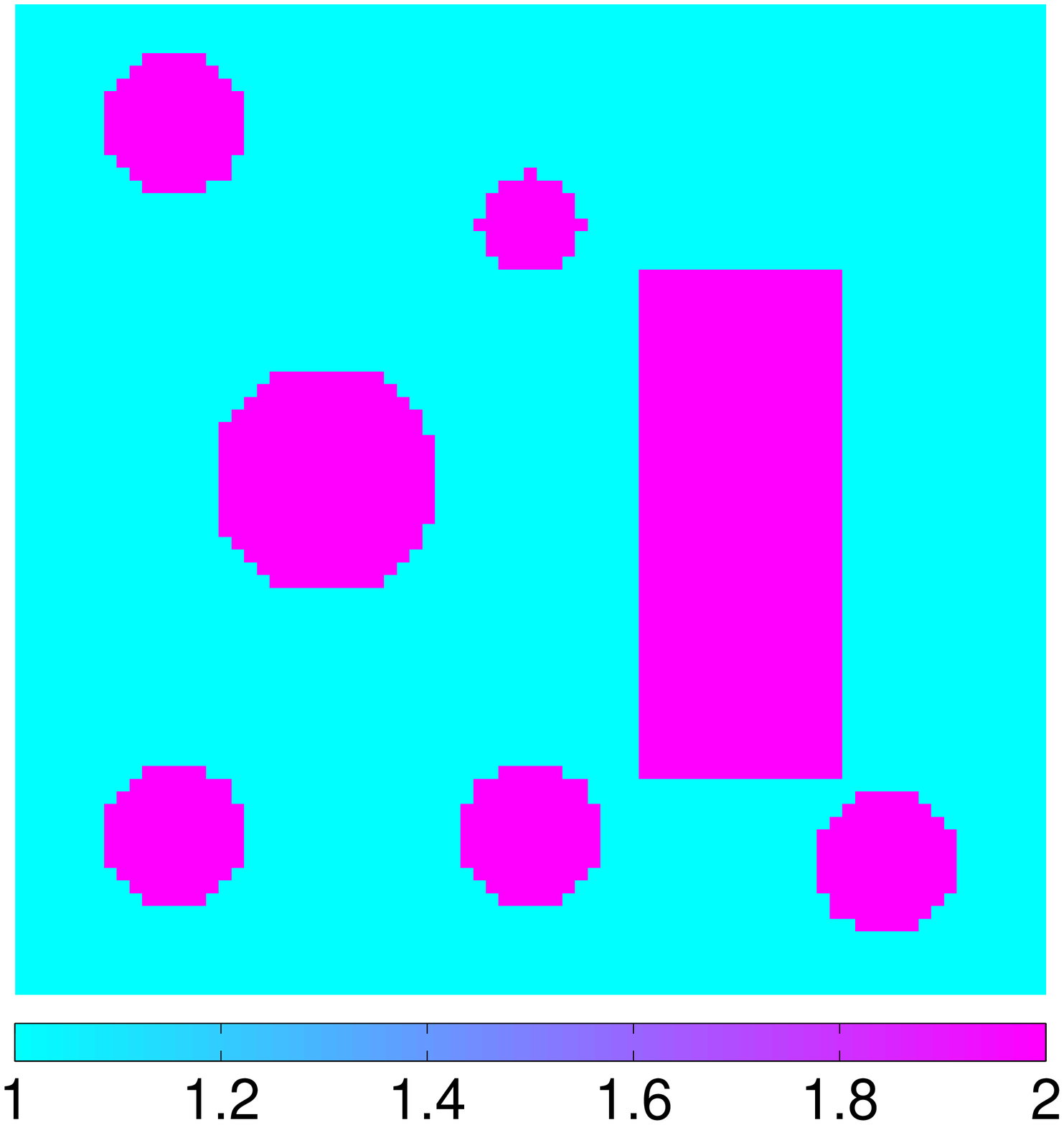}
      \label{ex1tbeta}
      } 
   \subfigure[$\sigma_3$ $(\alpha=0\%)$]{  
     \includegraphics[width=37mm,height=35mm]{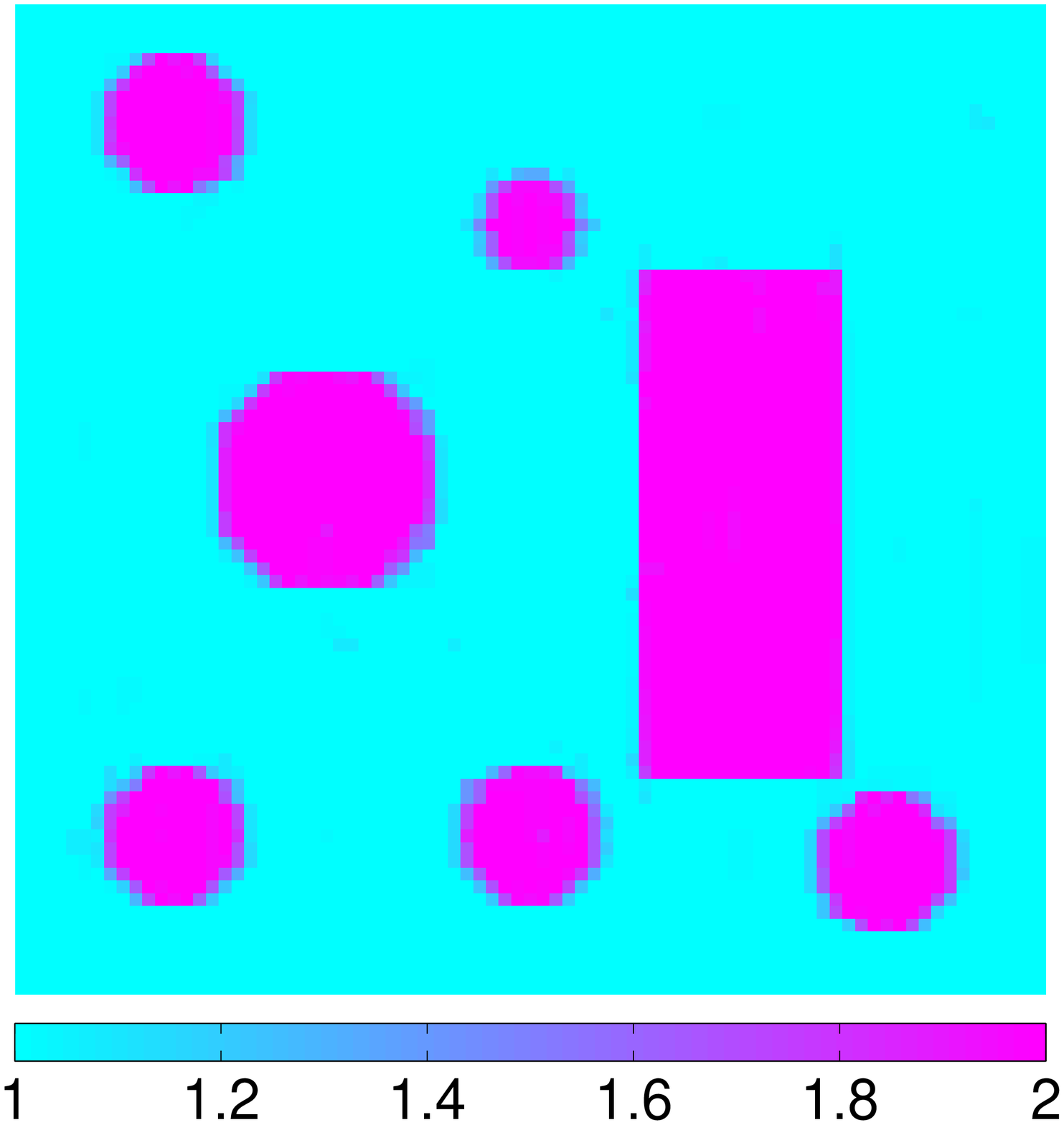}
     \label{ex1cbeta}
     }
   \subfigure[$\sigma_3$ ($\alpha=0.1\%$)]{ 
    \includegraphics[width=37mm,height=35mm]{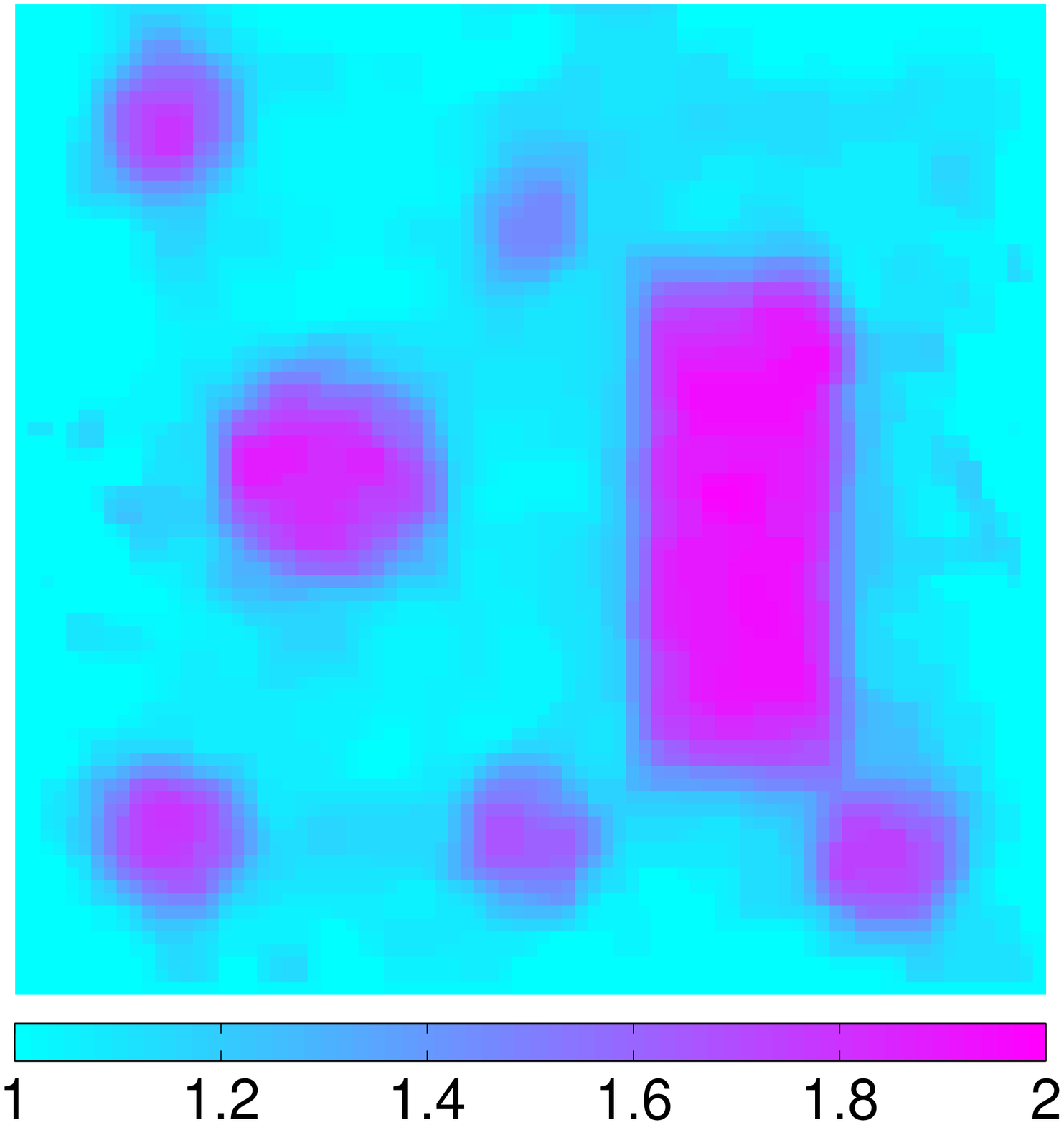}
    \label{ex1nbeta}
    }
   \subfigure[$\sigma_3$ at $\{y=0\}$]{ 
     \includegraphics[trim=10mm 5mm 10mm 0mm,clip,width=35mm,height=38mm]{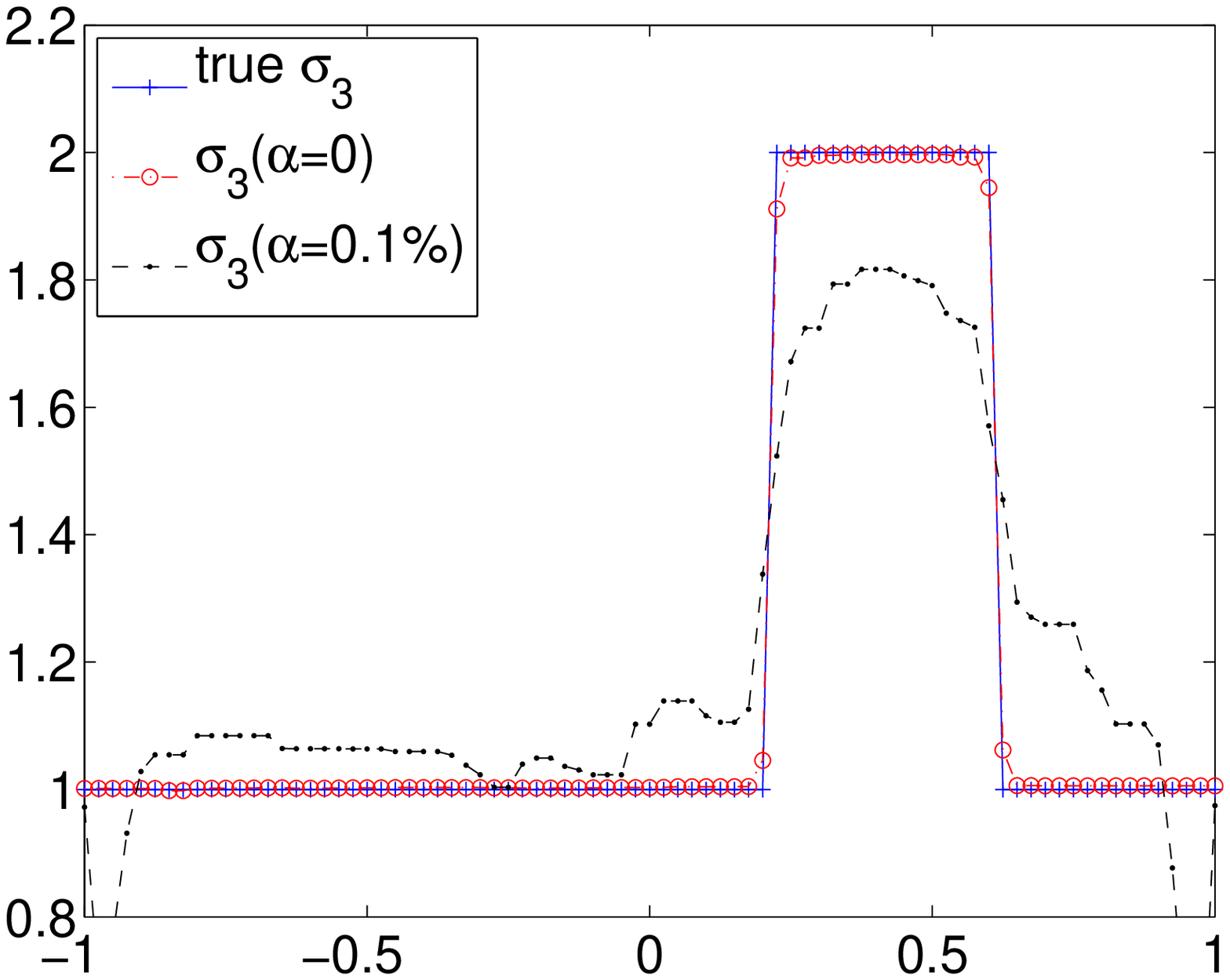} 
     \label{ex1rbeta}
     }
    \caption{$\sigma$ in Simulation 2. \subref{ex1txi}\&\subref{ex1ttau}\&\subref{ex1tbeta}: true values of $(\sigma_1, \sigma_2,\sigma_3)$. \subref{ex1cxi}\&\subref{ex1ctau}\&\subref{ex1cbeta}: reconstructions with noiseless data. \subref{ex1nxi}\&\subref{ex1ntau}\&\subref{ex1nbeta}: reconstructions with noisy data($\alpha=0.1\%$). \subref{ex1rxi}\&\subref{ex1rtau}\&\subref{ex1rbeta}: cross sections along $\{y=-0.5\}$.}
\label{E2sigma}
\end{figure}

\begin{figure}[htp]
  \centering
  \subfigure[true $\varepsilon_1$]{
      \includegraphics[width=37mm,height=35mm]{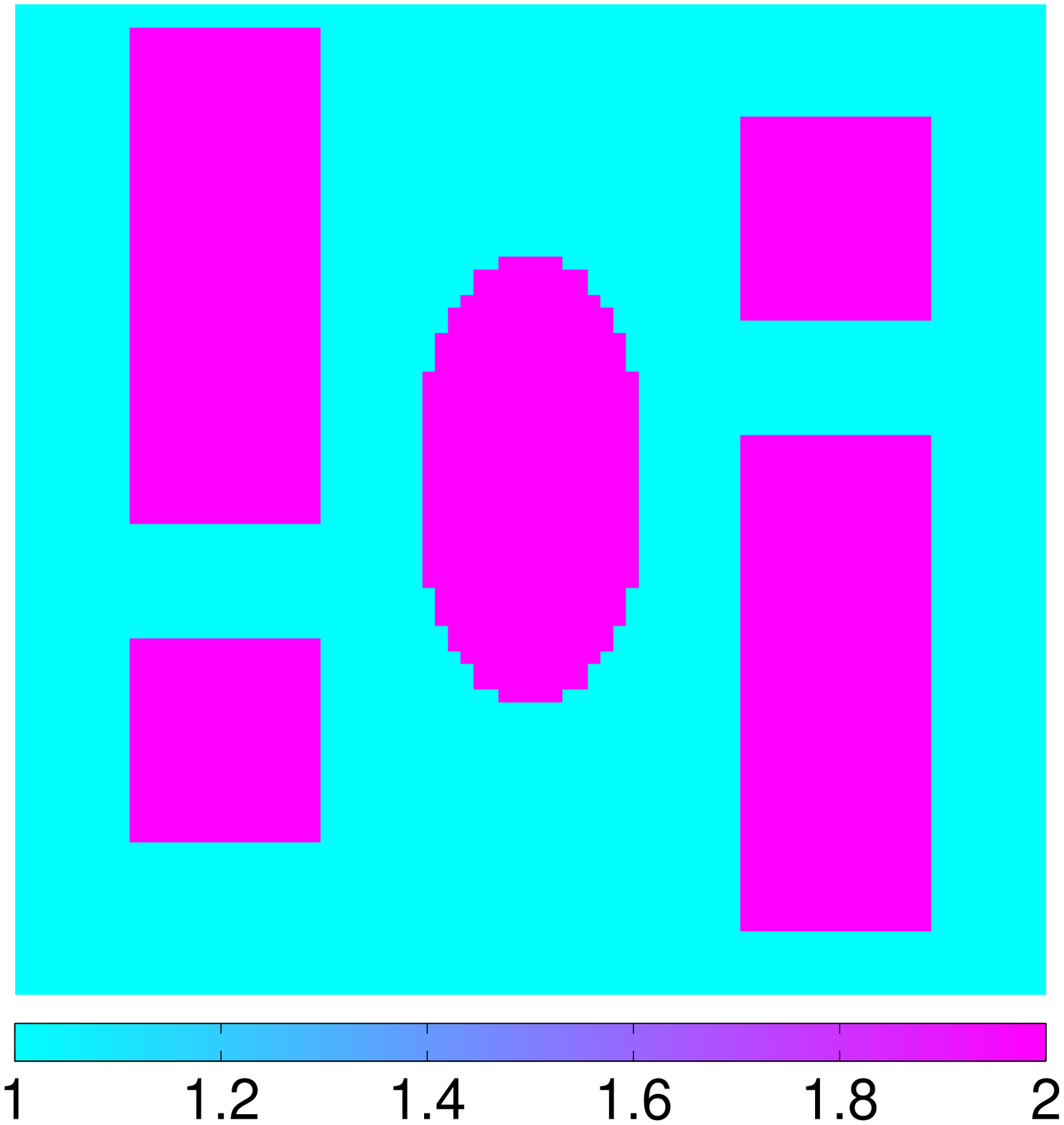}
      \label{ex1txi}
      } 
  \subfigure[$\varepsilon_1$ ($\alpha=0\%$)]{    
     \includegraphics[width=37mm,height=35mm]{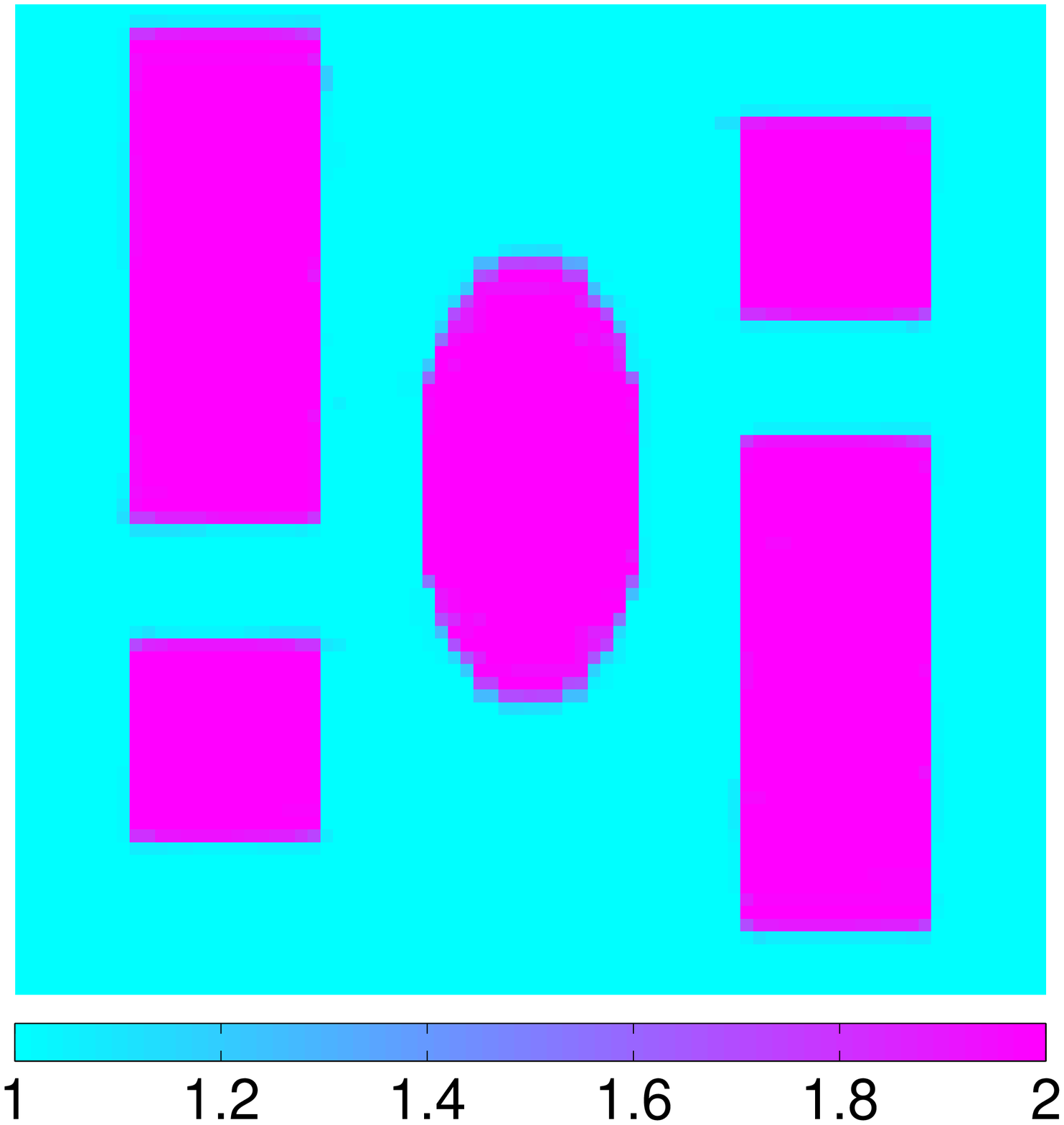}
     \label{ex1cxi}
     }
   \subfigure[$\varepsilon_1$ ($\alpha=0.1\%$)]{
    \includegraphics[width=37mm,height=35mm]{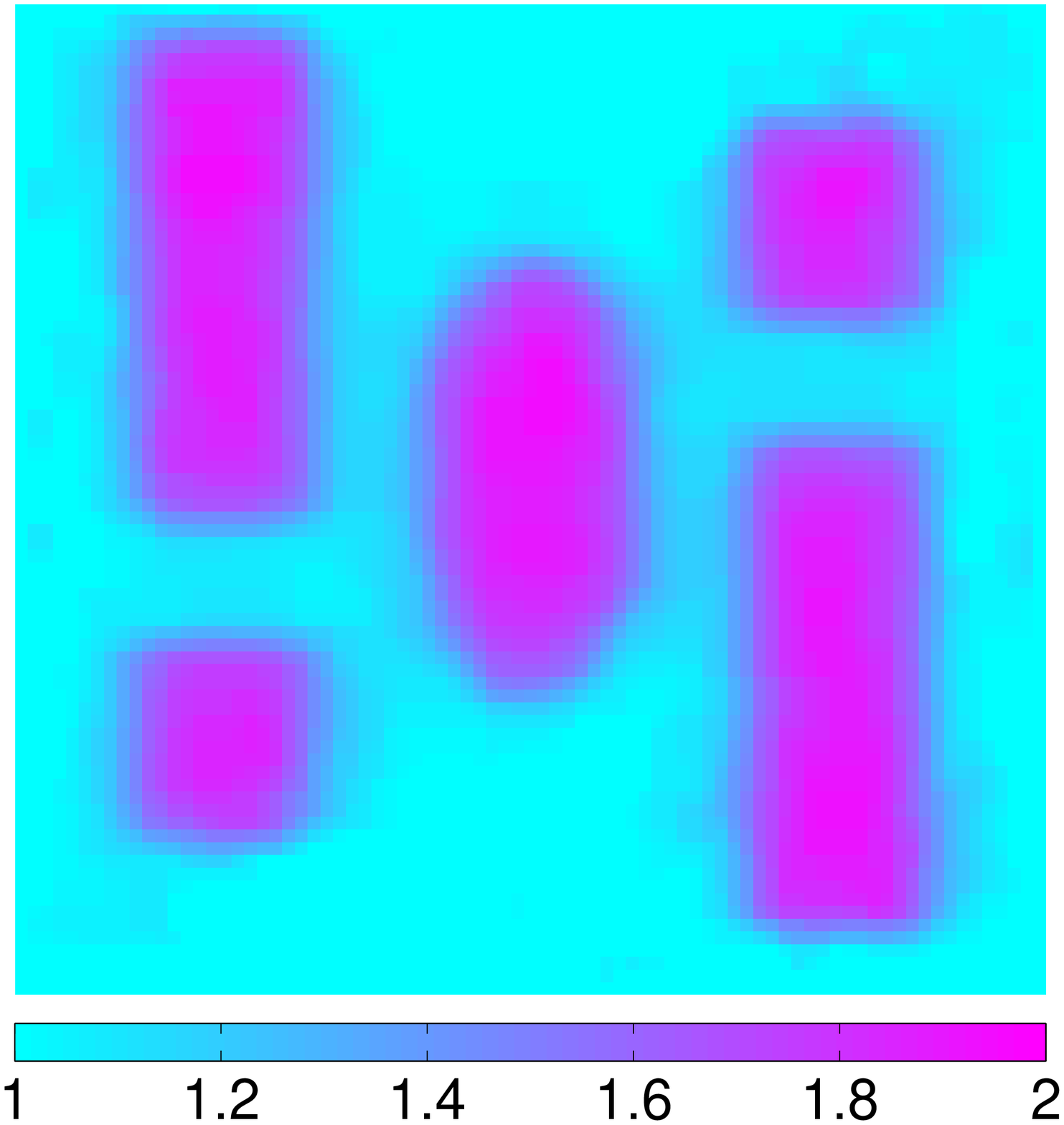}
    \label{ex1nxi}
    }
    \subfigure[$\varepsilon_1$ at \{$y=-0.5$\}]{
     \includegraphics[trim=10mm 5mm 10mm 0mm,clip,width=35mm,height=38mm]{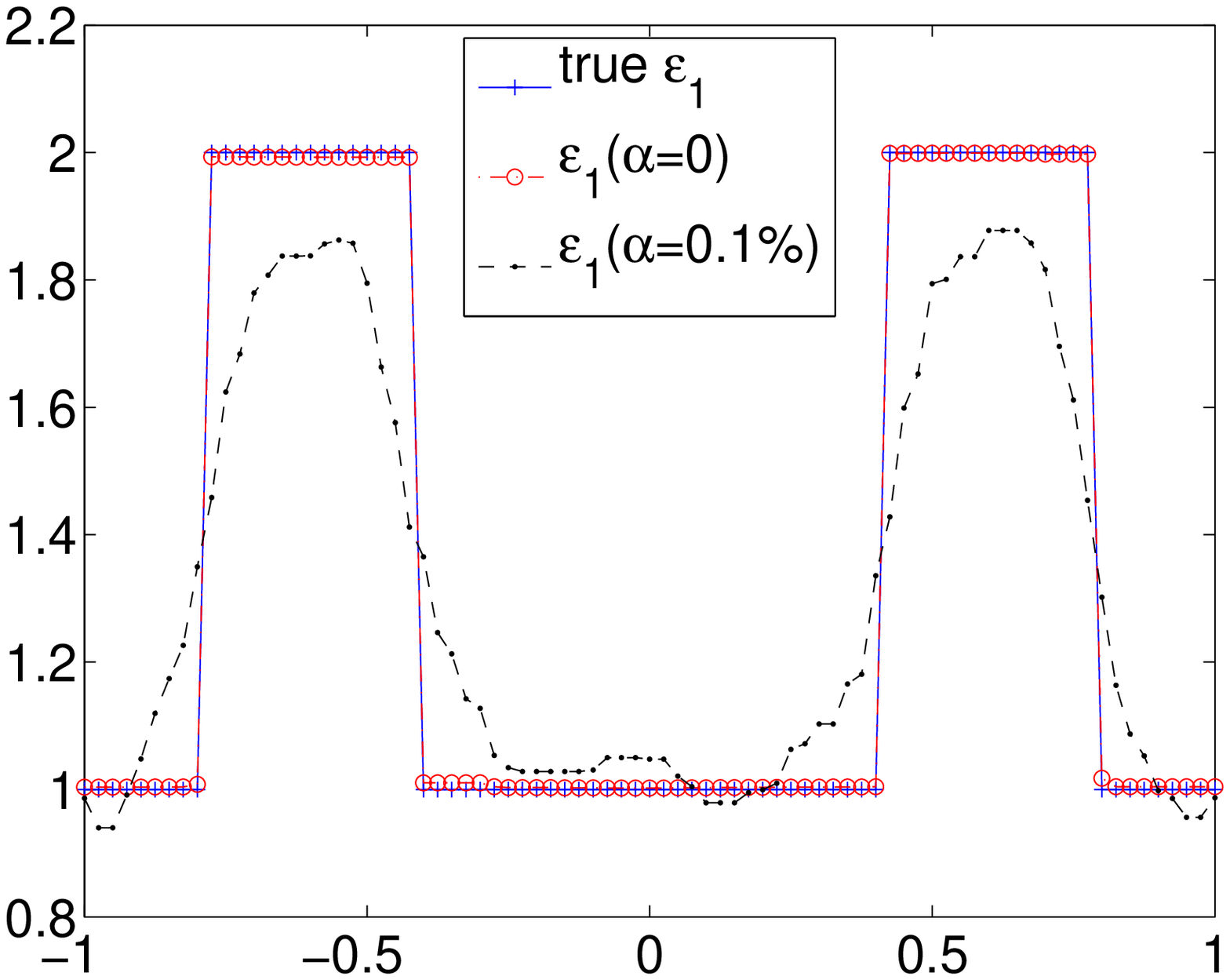} 
     \label{ex1rxi}
     }

     \subfigure[true $\varepsilon_2$]{
      \includegraphics[width=37mm,height=35mm]{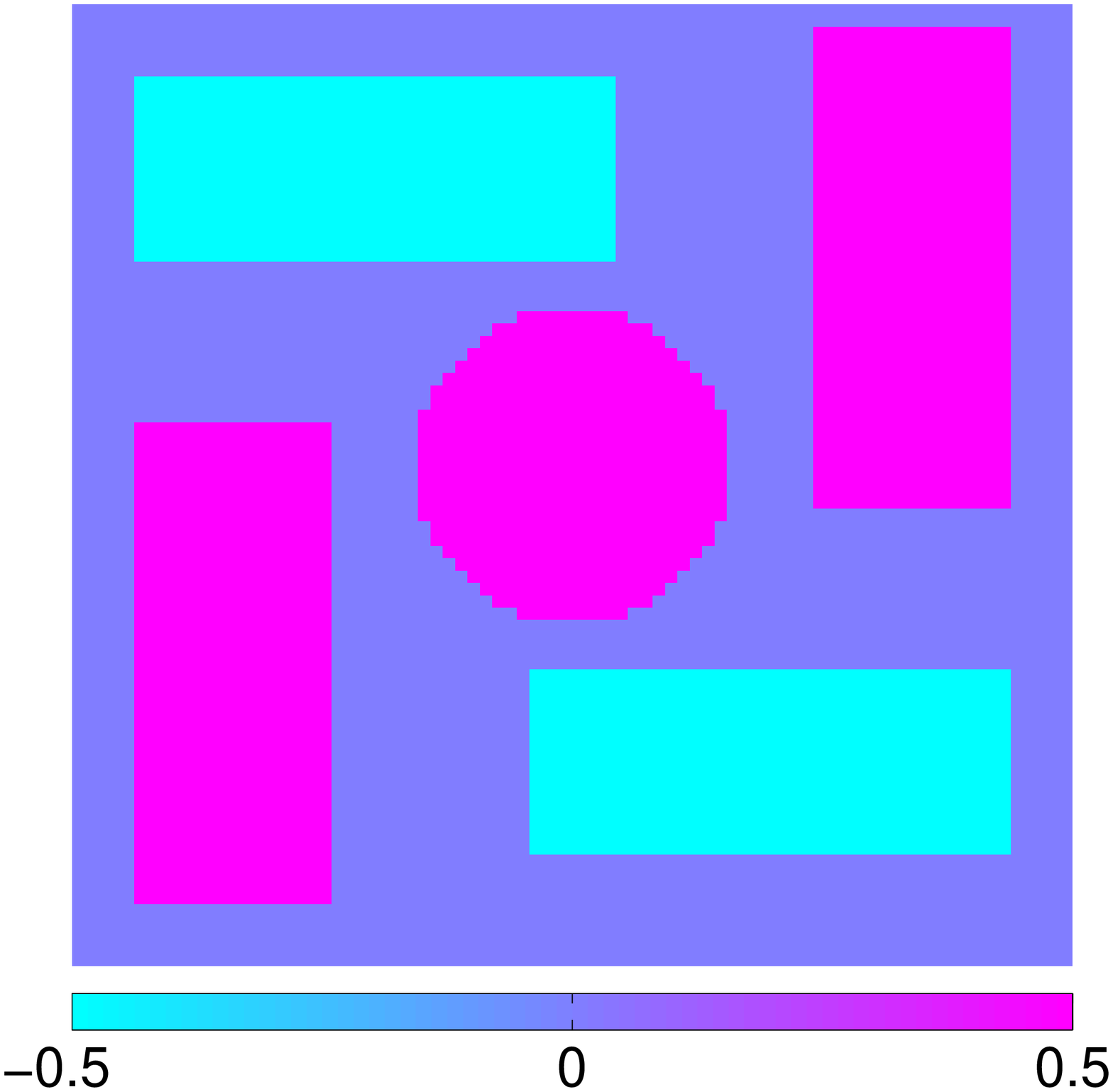}
      \label{ex1ttau}
      } 
    \subfigure[$\varepsilon_2$ ($\alpha=0\%$)]{ 
     \includegraphics[width=37mm,height=35mm]{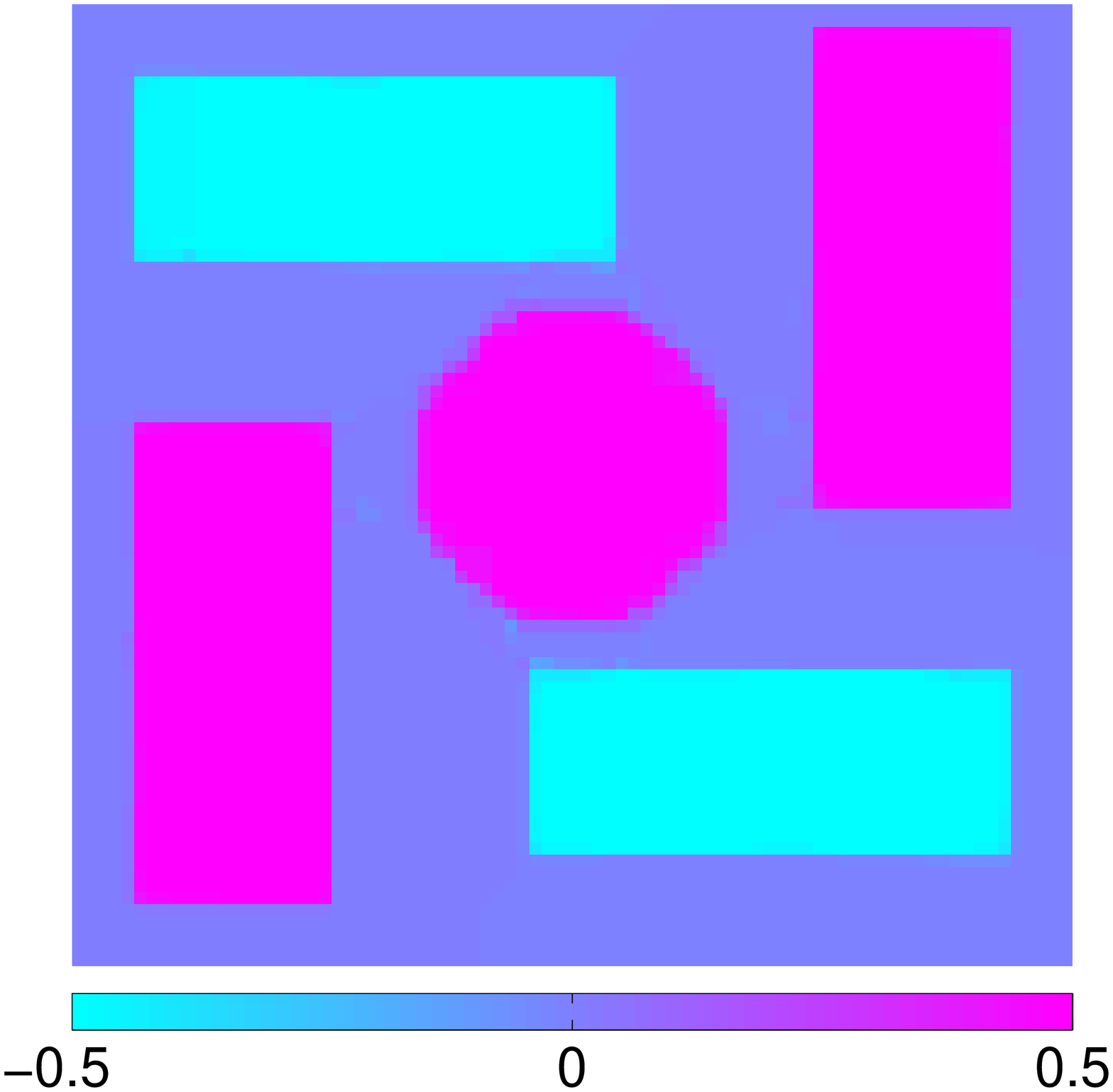}
     \label{ex1ctau}
     }
   \subfigure[$\varepsilon_2$ ($\alpha=0.1\%$)]{ 
    \includegraphics[width=37mm,height=35mm]{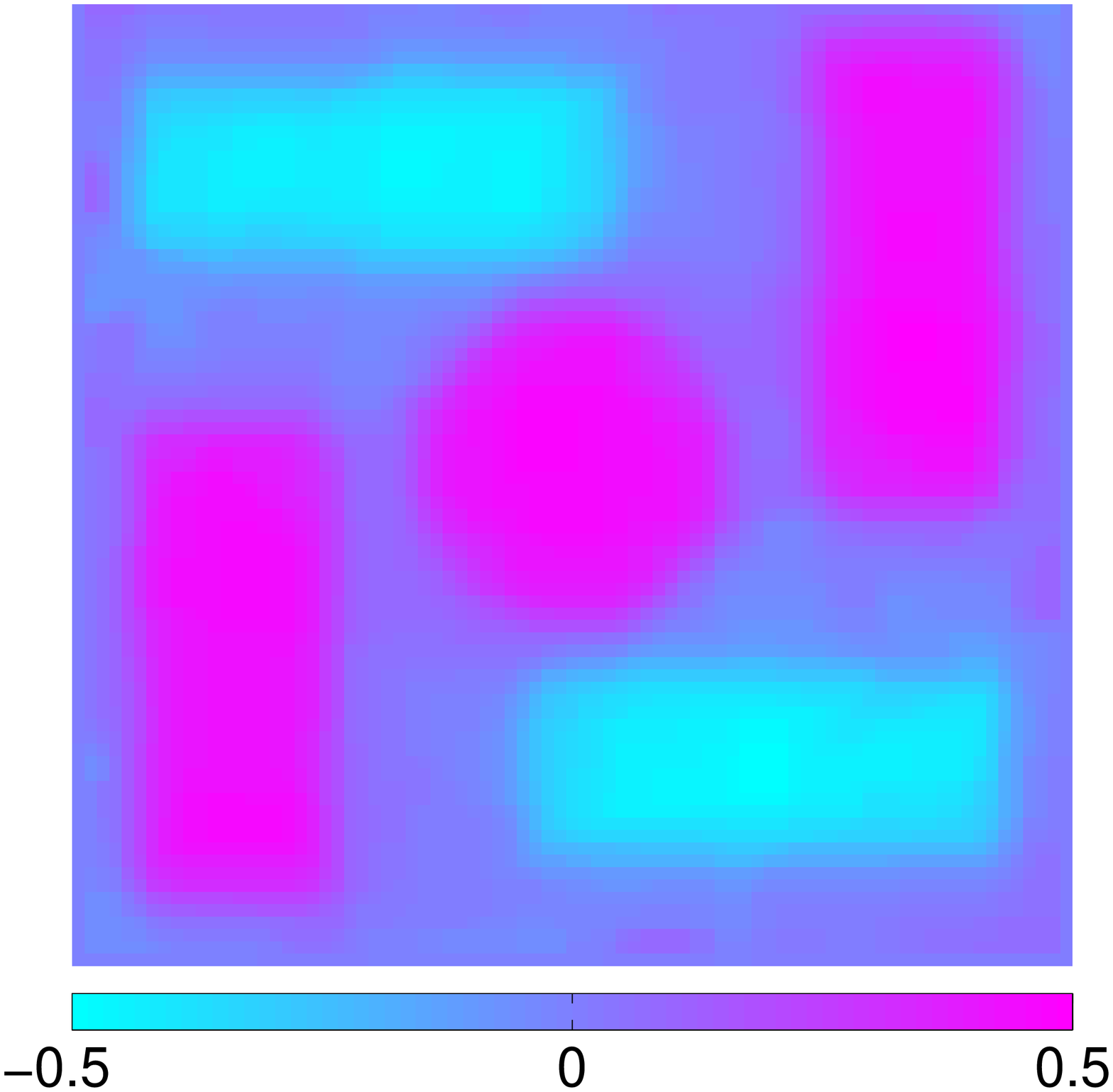}
    \label{ex1ntau}
    }
     \subfigure[$\varepsilon_2$ at \{$y=-0.5$\}]{
     \includegraphics[trim=10mm 5mm 10mm 0mm,clip,width=35mm,height=38mm]{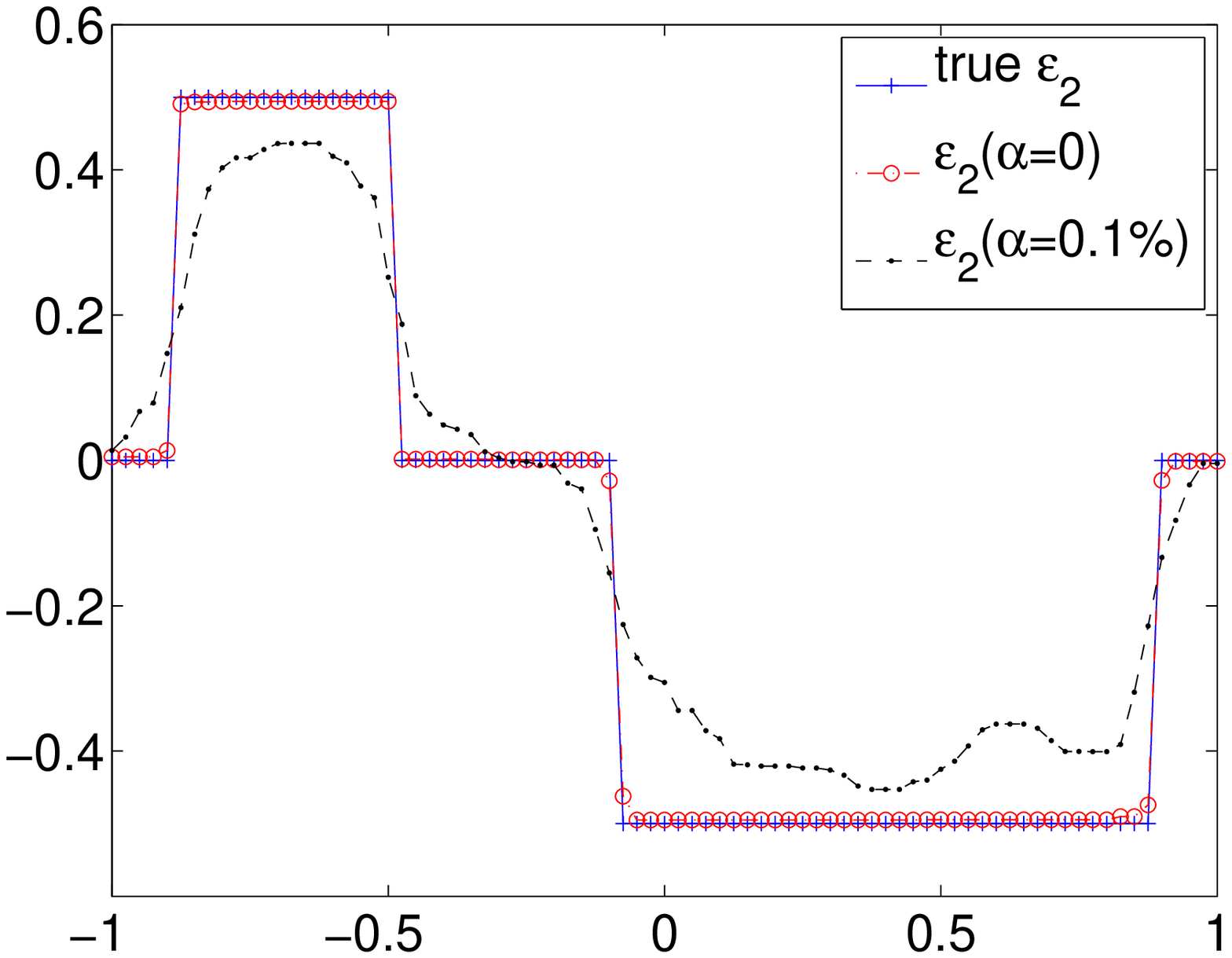} 
     \label{ex1rtau}
     }
     
     \subfigure[true $\varepsilon_3$]{
      \includegraphics[width=37mm,height=35mm]{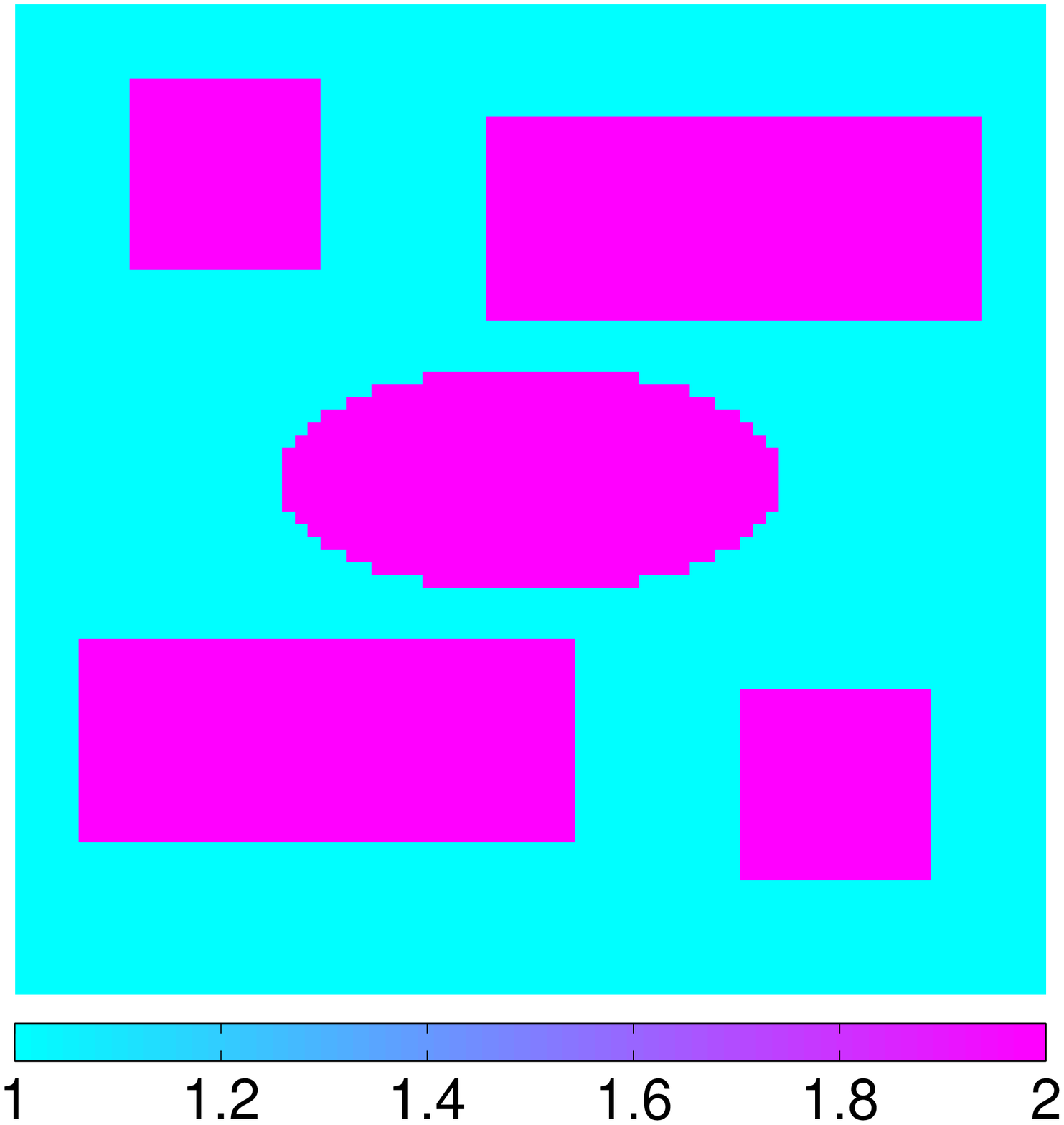}
      \label{ex1tbeta}
      } 
   \subfigure[$\varepsilon_3$ $(\alpha=0\%)$]{  
     \includegraphics[width=37mm,height=35mm]{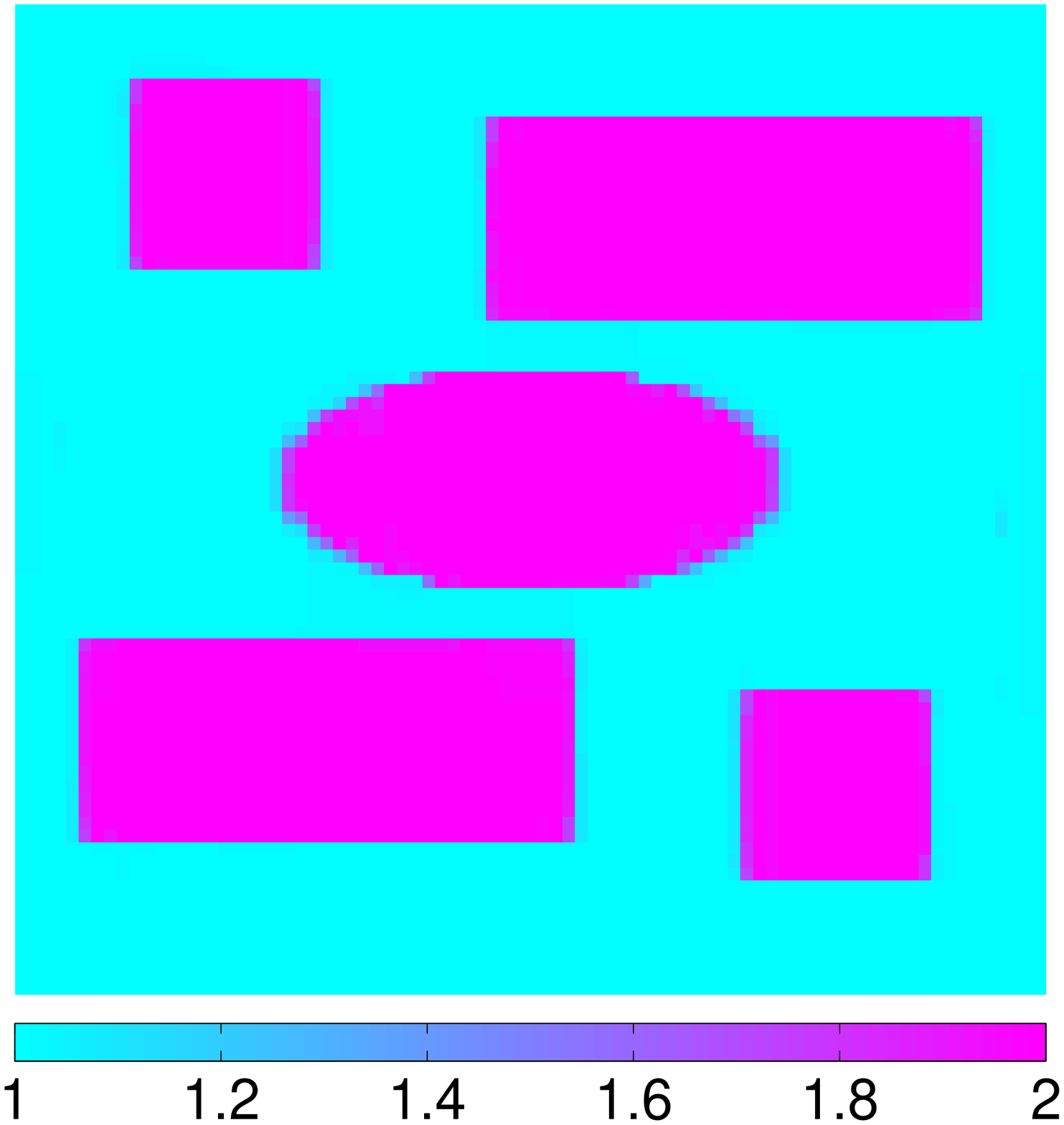}
     \label{ex1cbeta}
     }
   \subfigure[$\varepsilon_3$ ($\alpha=0.1\%$)]{ 
    \includegraphics[width=37mm,height=35mm]{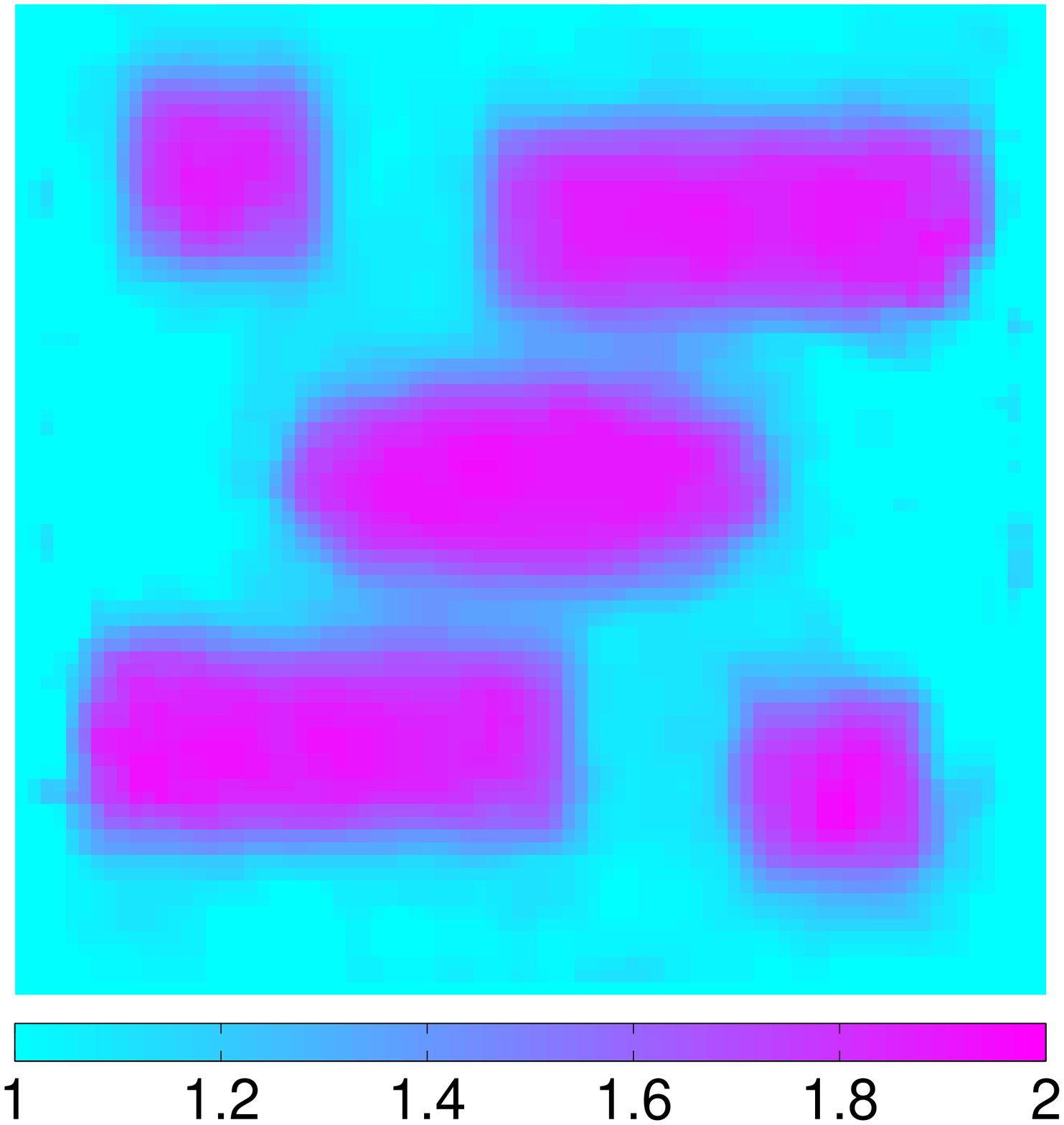}
    \label{ex1nbeta}
    }
   \subfigure[$\varepsilon_3$ at $\{y=0\}$]{ 
     \includegraphics[trim=10mm 5mm 10mm 0mm,clip,width=35mm,height=38mm]{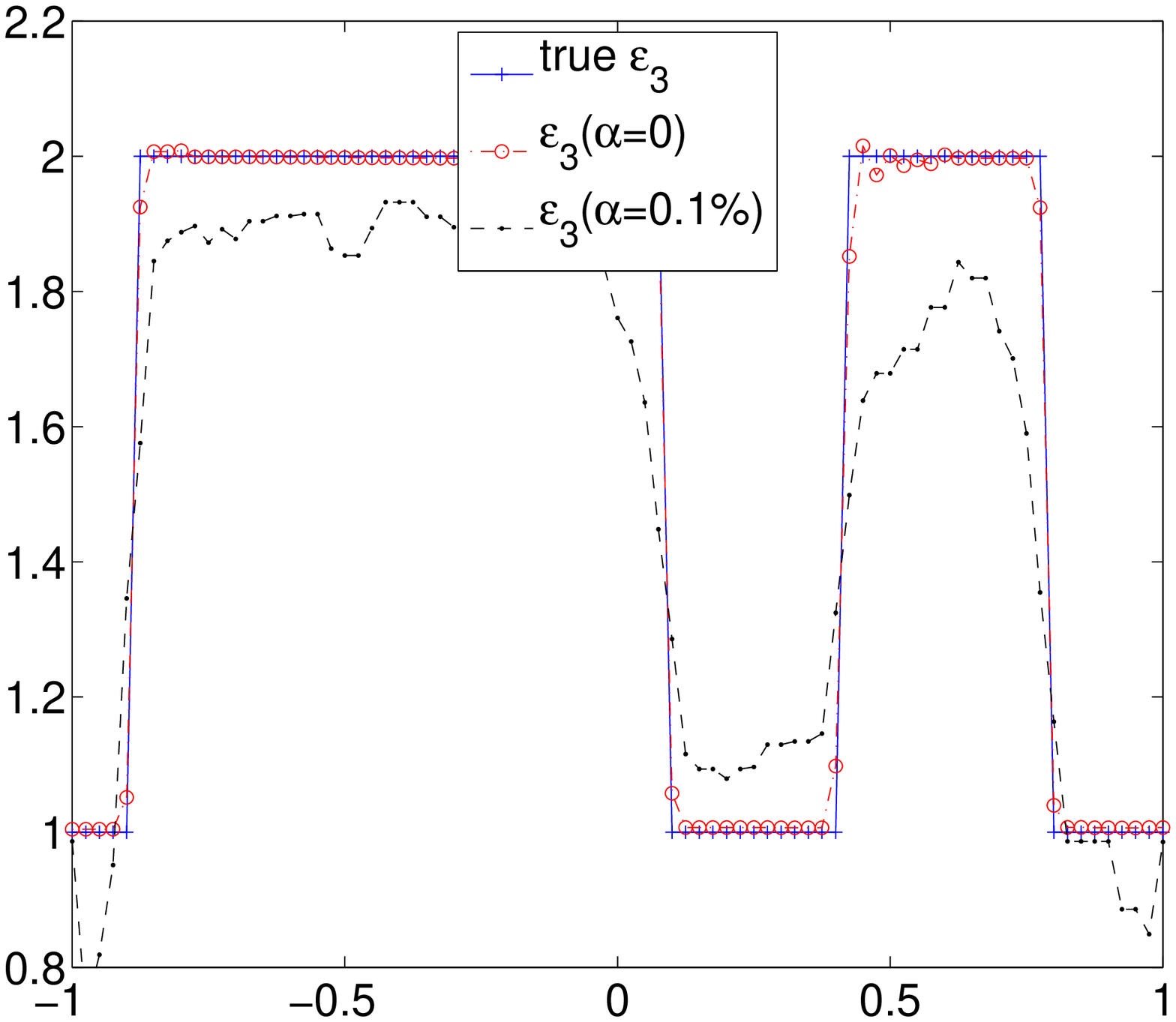} 
     \label{ex1rbeta}
     }
    \caption{$\varepsilon$ in Simulation 2. \subref{ex1txi}\&\subref{ex1ttau}\&\subref{ex1tbeta}: true values of $(\varepsilon_1, \varepsilon_2,\varepsilon_3)$. \subref{ex1cxi}\&\subref{ex1ctau}\&\subref{ex1cbeta}: reconstructions with noiseless data. \subref{ex1nxi}\&\subref{ex1ntau}\&\subref{ex1nbeta}: reconstructions with noisy data($\alpha=0.1\%$). \subref{ex1rxi}\&\subref{ex1rtau}\&\subref{ex1rbeta}: cross sections along $\{y=-0.5\}$.}
\label{E2epsilon}
\end{figure}

\section{Conclusion}\label{se:conclu}
We presented in this paper the reconstruction of $(\sigma, \varepsilon)$ from knowledge of several magnetic fields $H_j$, where the measurements $H_j$ solve the Maxwell's equations \eqref{Eq:maxwell} with prescribed illuminations $f=f_j$ on $\partial X$.

The reconstruction algorithms rely heavily on the linear independence of electric fields and the families of $\{M_j\}_j$ constructed in Hypothesis \ref{2 hypo}. These linear independence conditions can be checked by the available magnetic fields $\{H_j\}_j$ and additional measurements could be added if necessary. This method was used in the numerical simulations. We proved in Section \ref{Fulfilling Hypothesis} that these linear independence conditions could be satisfied by constructing CGO-like solutions for constant tensors. In fact, these conditions can be verified numerically for a large class of illuminations and more general tensors. The numerical simulations illustrate that both smooth and rough coefficients could be well reconstructed, assuming that the interior magnetic fields $H_j$ are accurate enough. However, the reconstructions are very sensitive to the additional noise on the functionals $H_j$. This fact is consistent with the stability estimate (with the loss of two derivatives from the measurements to the reconstructed quantities) in Theorem \ref{stability}.

\section*{Acknowledgment}

\end{document}